% \NeedsTeXFormat{LaTeX2e}
\documentclass{amsart}
\usepackage{amsopn}
\usepackage{amssymb, amscd, amsmath}
\usepackage{array}
\usepackage{xcolor} 
\usepackage{hyperref}
\usepackage[shortlabels]{enumitem}
\usepackage{tikz-cd}
\usepackage{cleveref}

\newcommand{\nc}{\newcommand}

\DeclareRobustCommand{\SkipTocEntry}[4]{}

 \textwidth 15cm   \oddsidemargin .5cm \evensidemargin .5cm

  % \topmargin 0cm \evensidemargin 0cm \oddsidemargin 0cm \textwidth 16cm \textheight 22cm

%%%%%%%% new commands

\nc{\uv}{{\underline v}}
\nc{\uh}{{\underline h}}
\nc{\mcvu}{{\underline \mcv}}

\nc{\vbph}{{\hat v}_{\beta^+}}
\nc{\vbpu}{{\underline v}_{\beta^+}}
\nc{\vbh}{{\hat v}_\beta}
\nc{\hvbp}{{\hat v}_{\beta^+}}
\nc{\hvb}{{\hat v}_\beta}
\nc{\vbp}{v_{\beta^+}}
\nc{\vb}{v_\beta}

%%%%%%%%%%%%%%%%%%%%

\nc{\g}{h}     %family of inner products on ggo parameterised by M
\nc{\vbM}{{\log v_\beta}}  %beta-volume of orbits in M
\nc{\vbg}{\log v_\betab}   %log beta-volume as a map on the set of inner products on ggo
\nc{\bM}{{^\ngo\beta}}      %beta as an M-parameterised family of endomorphisms of ggo
\nc{\bkg}{\bar \g}          %background inner product on ggo
\nc{\lb}{[\cdot \,,\cdot]}
%%%%%%%%%%%%%%%

 %Groups 
\nc{\Gl}{\mathsf{GL}}   \nc{\Or}{\mathsf{O}}    \nc{\SO}{\mathsf{SO}}   \nc{\Sl}{\mathsf{SL}}  \nc{\SU}{\mathsf{SU}}
\nc{\G}{\mathsf{G}}     \nc{\K}{\mathsf{K}}     \nc{\Ll}{\mathsf{L}}    \nc{\Bb}{\mathsf{B}}  \nc{\A}{\mathsf{A}}  
\nc{\Hg}{\mathsf{H}}    \nc{\Ii}{\mathsf{I}}    \nc{\N}{\mathsf{N}}     \nc{\Q}{\mathsf{Q}} \nc{\T}{\mathsf{T}}  
\nc{\Ss}{\mathsf{S}} \nc{\U}{\mathsf{U}} \nc{\B}{\mathsf{B}}  \nc{\Zz}{\mathsf{Z}} \nc{\M}{\mathsf{M}} \nc{\F}{\mathsf{F}}
\nc{\Qb}{\mathsf{Q}_\Beta} \nc{\Hb}{\mathsf{H}_\Beta} \nc{\Ub}{\mathsf{U}_\Beta} 
\nc{\Gb}{\mathsf{G}_\Beta} \nc{\Kb}{\mathsf{K}_\Beta} \nc{\Hh}{\mathsf{H}}

%Lie algebras
\nc{\Xg}{\mathfrak{X}} \nc{\ggo}{\mathfrak{g}}
\nc{\fg}{\mathfrak{f}}  \nc{\zg}{\mathfrak{z}} \nc{\ngo}{\mathfrak{n}} \nc{\kg}{\mathfrak{k}} 
\nc{\mg}{\mathfrak{m}} \nc{\bg}{\mathfrak{b}}  \nc{\sog}{\mathfrak{so}} \nc{\sug}{\mathfrak{su}} 
\nc{\slg}{\mathfrak{sl}} \nc{\glg}{\mathfrak{gl}} \nc{\rg}{\mathfrak{r}}  \nc{\hg}{\mathfrak{h}} 
\nc{\tgo}{\mathfrak{t}} \nc{\ug}{\mathfrak{u}} \nc{\dg}{\mathfrak{d}} \nc{\ag}{\mathfrak{a}} 
\nc{\pg}{\mathfrak{p}} \nc{\sg}{\mathfrak{s}} \nc{\affg}{\mathfrak{aff}} \nc{\qg}{\mathfrak{q}} 
\nc{\lgo}{\mathfrak{l}} \nc{\tg}{\mathfrak{t}}  \nc{\eg}{\mathfrak{e}}  \nc{\vg}{\mathfrak{v}}

%Caligraphic
\nc{\pca}{\mathcal{P}} \nc{\nca}{\mathcal{N}} \nc{\lca}{\mathcal{L}} \nc{\oca}{\mathcal{O}} \nc{\mca}{\mathcal{M}} \nc{\tca}{\mathcal{T}} \nc{\aca}{\mathcal{A}} \nc{\cca}{\mathcal{C}} \nc{\gca}{\mathcal{G}} \nc{\sca}{\mathcal{S}} \nc{\hca}{\mathcal{H}} \nc{\bca}{\mathcal{B}} \nc{\dca}{\mathcal{D}} \nc{\fca}{\mathcal{F}} \nc{\Qca}{\mathcal{Q}} \nc{\Eca}{\mathcal{E}} \nc{\Kca}{\mathcal{K}} \nc{\rca}{\mathcal{R}}   \nc{\vca}{\mathcal{V}} \nc{\wca}{\mathcal{W}}

%Misc
\nc{\dd}{{\rm d}}  \nc{\ddt}{\tfrac{{\rm d}}{{\rm d}t}}        \nc{\dds}{\tfrac{{\rm d}}{{\rm d}s}} 
\nc{\ddtbig}{\frac{{\rm d}}{{\rm d}t}}      \nc{\dpar}{\tfrac{\partial}{\partial t}}    
\nc{\im}{\mathtt{i}}    

\nc{\RR}{{\mathbb R}} \nc{\HH}{{\mathbb H}} \nc{\CC}{{\mathbb C}} \nc{\ZZ}{{\mathbb Z}}
\nc{\FF}{{\mathbb F}} \nc{\NN}{{\mathbb N}} \nc{\QQ}{{\mathbb Q}} \nc{\KK}{{\mathbb K}}

\nc{\vs}{\vspace{.2cm}} 
\nc{\ip}{{\langle \,\cdot \,,\cdot \,\rangle }} \nc{\la}{\langle} \nc{\ra}{\rangle} 
\nc{\unm}{\tfrac{1}{2}} \nc{\unc}{\tfrac{1}{4}} 

\nc{\minimatrix}[4]{\left(\begin{smallmatrix} {#1} & {#2} \\ {#3} & {#4} \end{smallmatrix}\right)}
\nc{\twomatrix}[4]{\left[\begin{array}{cc} {#1} & {#2} \\ {#3} & {#4} \end{array} \right]}
\nc{\threematrix}[9]{\left[\begin{array}{ccc} {#1} & {#2} & {#3} \\ {#4} & {#5} & {#6}\\ {#7} & {#8} & {#9} \end{array} \right]}

\nc{\alert}{\color{blue}}

% Algebra & Lie theory operators
\nc{\ad}{\operatorname{ad}}  \nc{\Aut}{\operatorname{Aut}}   \nc{\Inn}{\operatorname{Inn}}   \nc{\Lie}{\operatorname{Lie}} \nc{\Ad}{\operatorname{Ad}} \nc{\Der}{\operatorname{Der}} \nc{\rad}{\operatorname{rad}} \nc{\kf}{\operatorname{B}}
\nc{\End}{\operatorname{End}} \nc{\rank}{\operatorname{rank}} \nc{\Ker}{\operatorname{Ker}} \nc{\tr}{\operatorname{tr}} \nc{\Sym}{\operatorname{Sym}} \nc{\diag}{\operatorname{diag}} \nc{\proj}{\operatorname{pr}} \nc{\Id}{{\operatorname{Id}}} \nc{\Span}{\operatorname{span}}

% Geometry & Analysis operators
\nc{\grad}{\operatorname{grad}} \nc{\divg}{\operatorname{div}} \nc{\divgb}{\overline{\operatorname{div}}}
\nc{\Diff}{\operatorname{Diff}} \nc{\Isom}{\operatorname{Isom}}
\nc{\Ric}{\operatorname{Ric}}   \nc{\ric}{\operatorname{ric}} 
\nc{\Riem}{\operatorname{Rm}}   \nc{\scal}{\operatorname{scal}} 
\nc{\scalm}{\operatorname{scal}^\star} \nc{\Riccim}{\operatorname{Ric}^{\star}}  
\nc{\vol}{\operatorname{vol}}   \nc{\inj}{\operatorname{inj}}
\nc{\isog}{\mathfrak{iso}}      \nc{\ho}{\mathcal{H}} \nc{\ve}{{\mathcal{V}}}      
\nc{\mm}{\operatorname{M}} \nc{\mmm}{\operatorname{m}} 
\nc{\mcv}{{\operatorname{H}}}   \nc{\Hess}{\operatorname{Hess}}

%\numberwithin{equation}{section}

\theoremstyle{plain}
\newtheorem{theorem}{Theorem}[section]
\newtheorem{conjecture}[theorem]{Conjecture}
\newtheorem{proposition}[theorem]{Proposition}
\newtheorem{corollary}[theorem]{Corollary}
\newtheorem{lemma}[theorem]{Lemma}
\newtheorem{teointro}{Theorem}

\theoremstyle{definition}
\newtheorem{definition}[theorem]{Definition}

\newtheorem{assumption}[theorem]{Assumption}

\theoremstyle{remark}
\newtheorem{remark}[theorem]{Remark}

\newtheorem{example}[theorem]{Example}

% For removing MR numbers from bibliography
\AtBeginDocument{%
   \def\MR#1{}
}

\begin{document}
\begin{titlepage}

\title{Non-compact Einstein manifolds with unimodular isometry group}

\author{Christoph B\"ohm}	
\address{University of M\"unster, Einsteinstra{\ss}e 62, 48149 M\"unster, Germany}
\email{cboehm@math.uni-muenster.de}
\author{Ramiro A.~ Lafuente} 
\address{School of Mathematics and Physics, The University of Queensland, St Lucia QLD 4072, Australia}
\email{r.lafuente@uq.edu.au}
%\thanks{The second author was supported by the Alexander von Humboldt Foundation.}

\begin{abstract}
We show that a negative Einstein manifold admitting a proper isometric action of a connected unimodular Lie group with compact, possibly singular, orbit space 
%(which we allow to have singularities)
splits isometrically as a product of a symmetric space and a compact negative Einstein manifold.  The proof involves 
%a significant refinement of the techniques used in Part I, 
the theory of polar actions,  Lie-theoretic arguments and maximum principles.
% some of which are  applied on the frame bundle. 
%We also discuss the Ricci flat case, open questions, and future directions. 
% In this article we continue the study of negative Einstein manifolds admitting a proper isometric action of a connected Lie group with compact orbit space, which in contrast to Part I, we do not assume to be smooth. Our main result is that if the group is unimodular, then it must be semisimple without compact simple factors, and the manifold splits isometrically as the Riemannian product of a symmetric space of non-compact type and a closed negative Einstein manifold. The proof involves a significant refinement of the techniques used in Part I, together with the theory of polar actions and a combination of  Lie-theoretic arguments and maximum principles, some of which are  applied on the frame bundle. We also discuss open questions and future directions. 
\end{abstract}

% \date{}

\end{titlepage}

\maketitle

\setcounter{tocdepth}{1}

\setcounter{page}{1}

\tableofcontents

\section{Introduction}

Let $(M^n,g)$ be a  complete,  non-compact Einstein manifold  with cocompact symmetries,  meaning that a connected Lie group $\Ll$ acts properly and isometrically on $(M^n,g)$ with a compact orbit space.  Note that $M$, and hence also $\Ll$, must be non-compact by Bochner's theorem \cite{Bochner1948}. 
Our main result is the following  `splitting theorem' when the isometry group is unimodular:
    % rigidity theorem:
% a complete characterization of the possible group actions in the case the group is unimodular:

\begin{teointro}\label{thm_unimod}
Let $(M^n,g)$ be a complete Einstein manifold with negative scalar curvature, and let $\Ll$ be a connected unimodular Lie group acting cocompactly, properly and isometrically.  Then, $\Ll$ is semisimple, and there is an isometric splitting of $M$ as $B \times \Ll/\K$, where $B$ is a compact Einstein manifold   and $\Ll/\K$ is a symmetric space of non-compact type.
\end{teointro}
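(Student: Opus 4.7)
My plan is to study the principal foliation of the $\Ll$-action and, from the Einstein equation combined with unimodularity, extract a Bochner-type identity on the compact orbit space $M/\Ll$ to which a maximum/integral principle applies. Let $M_0 \subset M$ be the union of principal orbits, $\K$ the principal isotropy, and $\mcv$ the $\Ll$-invariant horizontal mean curvature vector field on $M_0$. Unimodularity is used already in the first step: it produces an $\Ll$-invariant volume form on each orbit, and hence an $\Ll$-invariant orbit-volume density $v : M_0 \to \RR_{>0}$; the first variation of volume then yields $\mcv = -\grad \log v$. So $\mcv$ is a \emph{gradient}, and $\log v$ descends to the regular part of $M/\Ll$, with the necessary regularity across the singular stratum supplied by polar-action/slice-theorem analysis.

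Combining $\Ric = \lambda g$, $\lambda<0$, with the Riccati and Gauss--Codazzi equations for the principal foliation, I would expect a Bochner-type identity of the shape
$$
\Delta \log v \;=\; |\mcv|^2 \;+\; |B^\circ|^2 \;+\; |A|^2 \;+\; F \;-\; n\lambda ,
$$
where $B^\circ$ is the traceless second fundamental form of the orbits, $A$ is the O'Neill integrability tensor, and $F\ge 0$ packages the intrinsic orbit-curvature contributions whose sign is controlled by the unimodularity of $\Ll$ (via the homogeneous Ricci formula, in which a non-unimodular term is precisely the obstruction to non-negativity). Since $\log v$ is $\Ll$-invariant and $M/\Ll$ is compact, integrating this identity against the transverse measure --- with boundary contributions at the singular orbits controlled by the slice theorem --- forces every non-negative summand on the right to vanish: $\mcv\equiv 0$, $B^\circ\equiv 0$, $A\equiv 0$, $F\equiv 0$. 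In particular the principal orbits are totally geodesic with integrable orthogonal complement.

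A de Rham argument then splits $M$ isometrically as $\Ll/\K\times B$, with both factors Einstein of constant $\lambda<0$ and $B$ compact. On the factor $\Ll/\K$ we now have a homogeneous Einstein manifold of negative scalar curvature with unimodular transitive group, and the classical rigidity theorem (Alekseevskii--Kimelfeld, Dotti--Leite--Miatello, Heber) gives at once that $\Ll$ is semisimple and $\Ll/\K$ is symmetric of non-compact type, concluding the argument.

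I expect the principal obstacle to be the derivation of the Bochner identity with correct signs on every error term --- especially the non-negativity of $F$, where unimodularity enters subtly through the homogeneous Ricci formula --- together with the justification of the integration by parts across the singular stratum of $M/\Ll$, for which the theory of polar actions is indispensable. That unimodularity is essential is borne out by nilpotent negative Einstein solvmanifolds, which violate the splitting and force the mean curvature vector to be non-zero.
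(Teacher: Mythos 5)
The pivotal step in your proposal---a Bochner identity for $\Delta\log v$ in which the intrinsic-curvature term $F$ is non-negative thanks to unimodularity of $\Ll$---does not hold, and this is where the argument breaks. Unimodularity of $\Ll$ makes the orbit volume density $v$ invariant (hence $\mcv=-\nabla\log v$), but it gives no sign control on the intrinsic Ricci or scalar curvature of the orbits: a unimodular homogeneous space can have negative scalar curvature (nilmanifolds), positive (compact groups), or Ricci of indefinite sign. The quantity the paper actually controls is not the plain $\Ll$-orbit volume but the $\beta^+$-weighted volume density of the orbits of the \emph{nilradical} $\N\lhd\Ll$, where the weights $\beta^+$ come from the GIT stratification of nilpotent Lie algebras; the key estimate is the non-trivial GIT inequality $\la\Ric^\N,\beta^+\ra\geq0$, which is special to nilpotent groups with the specific weights $\beta^+$. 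Without that mechanism there is no non-negative error term, and the maximum principle applied to $\log v_\Ll$ alone does not close. Unimodularity of $\Ll$ enters, instead, in making the weighted nilradical density $\G$-invariant (Lemma~\ref{lem_vbp_finv}) so it descends past the non-smooth orbit space.

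Two further issues. Your ``boundary contributions at the singular orbits controlled by the slice theorem'' glosses over exactly what the paper flags as the serious obstruction: $\log v$ diverges to $-\infty$ on singular orbits and the orbit space is not a smooth manifold, so one cannot integrate by parts there; the paper instead passes to the frame bundle (where the lifted $\Ll$-action is free), extends the weighted determinant to the degenerate pull-back metric, and shows the density attains its maximum in the regular part (Lemma~\ref{lem_hvbp}). And even granting semisimplicity, the splitting does not fall out of one Bochner identity: the paper must first establish that the $\Ll$-action is polar (by running the nilradical argument over all Borel subgroups from all Iwasawa decompositions, using that Borel nilradicals span $\lgo$), then that minimal parabolics act transitively on orbits (a second, delicate algebraic curvature estimate), then that the orbits are symmetric (a Bochner argument on $\N\backslash M$), and only then that they are totally geodesic via $\|U^*\|^2$ on a compact section. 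Finally, your appeal to ``classical rigidity'' (Alekseevskii--Kimelfeld, Dotti, Heber) for the last step overshoots: for a general unimodular transitive group the conclusion needs the recent resolution of the Alekseevskii conjecture, not the classical statements, which carry extra hypotheses.
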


A connected Lie group $\Ll$ is unimodular if and only if 
its left Haar measure is also right invariant.
 Nilpotent, semisimple and  reductive Lie groups
are unimodular. Concerning examples of compact Einstein manifold with 
negative scalar curvature we refer to 
\cite{Aub78,Yau, An06,Ba12,FP20}. Moreover,
let us  mention  that
in the particular case of homogeneous manifolds (i.e.~ when $\Ll$ acts transitively), \Cref{thm_unimod} follows 
% when the action of $\Ll$ is transitive, the fact that $(M^n,g)$ is a symmetric space follows 
from the recent proof of the Alekseevskii conjecture in \cite{alek_sol}.
% a negative Einstein homogeneous space $\Ll/\Hh$ with $\Ll$ semisimple
% is a symmetric space of non-compact type, that is $\Hh=\K$ is a maximal
% compact subgroup of $\Ll$.

The first conclusion in Theorem \ref{thm_unimod},
 that $\Ll$ must be semisimple, was known
for  homogeneous spaces  \cite{Dtt88},
and more generally under  the assumption that all orbits are principal  \cite{alek_sol}, i.e.  when 
the orbit space $\Ll \backslash M$ is smooth. We would like to emphasize that,
  in general,
the orbit space can be a non-smooth  metric space, due to the presence of singular and exceptional orbits in $M$. 
The second conclusion ---the isometric splitting--- is new even if the orbit space is smooth.

Assuming  compactness of the orbit space in Theorem \ref{thm_unimod}
is  necessary:
the horospheres in real hyperbolic space $\HH^n$
are the orbits of a proper, isometric, $\RR^{n-1}$-action, 
with orbit space $\RR$, but clearly they do 
 not split isometrically. More generally, any  negative Einstein homogeneous space can be seen as the orbit of an isometric action in an irreducible symmetric space \cite{Jab18}.  Further inhomogeneous examples may be found in    \cite{Bo1999,CP02, DW11,BDW2015,CDJL2020,Wink21}  
 and \cite{CS2021,MR22,AT23},
with compact and  non-compact symmetry group, respectively. 
%Furthermore, 
%it is clear that, in general, the isometric splitting will not hold under the weaker %assumption of negative Ricci curvature. However, in this case it is still true that $\Ll$ %must be semisimple, provided it is unimodular: see \Cref{thm_ricneg}. 

Regarding the unimodularity assumption, we propose the following

\begin{conjecture}\label{conj_split}
Let $(M^n,g)$ be a complete 
negative Einstein manifold admitting a proper, isometric, cocompact action by a connected Lie group. Then, $M^n$ splits isometrically  as a product of a compact Einstein manifold  and a non-compact homogeneous Einstein space.
\end{conjecture}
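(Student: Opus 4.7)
The plan is to reduce the conjecture to the unimodular setting of Theorem \ref{thm_unimod} by extracting from $\Ll$ a solvable Iwasawa-type subgroup $\Ss$ whose orbits recover the homogeneous non-compact factor. The main ingredients would be the already established Alekseevskii conjecture \cite{alek_sol}, together with the polar-action and maximum-principle machinery underlying the proof of Theorem \ref{thm_unimod}.

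First, analyze the principal orbit type. On the open dense set $M_{\mathrm{reg}}$ of principal points the $\Ll$-orbits are diffeomorphic to a fixed $\Ll/\Hg$, and the induced metric on a principal orbit is $\Ll$-invariant. Using the Einstein equation, one projects the Ricci tensor of $(M,g)$ onto the orbit and normal directions; in the non-unimodular case the modular character $\Delta \colon \Ll \to \RR_{>0}$ contributes an extra mean-curvature term along orbits, but this is precisely the correction needed to relate the orbit Ricci curvature with the intrinsic Ricci tensor of the homogeneous metric on $\Ll/\Hg$, modulo a piece controlled by the normal geometry and the shape operator. The goal of this step is to deduce that a principal orbit $\Ll/\Hg$ is itself a negative homogeneous Einstein manifold. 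Invoking the Alekseevskii conjecture, $\Ll/\Hg$ is then a solvmanifold of Iwasawa type, and in particular $\Ll$ contains a closed simply-connected solvable Iwasawa-type subgroup $\Ss$ such that $\Ll = \Ss \cdot \Hg$ with $\Ss \cap \Hg$ trivial.

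Second, propagate the $\Ss$-action from principal orbits to all of $M$. The subgroup $\Ss$ is the natural candidate for the isometry group of the non-compact factor, and one must verify that its orbits form a parallel distribution globally, i.e.~that the tangent distribution $\vca = \{X^{*}: X \in \sg\}$ is totally geodesic with integrable, also totally geodesic, orthogonal complement. The Einstein condition on $M$ forces the restricted Ricci tensor of any leaf of the $\Ss$-foliation to match the intrinsic Ricci tensor dictated by the Iwasawa structure, and the rigidity of Iwasawa-type Einstein solvmanifolds (which admit no continuous deformations within the same algebraic type) should force the second fundamental form of $\vca$ to vanish. A strong maximum principle argument on the compact orbit space $\Ll\backslash M$, in the spirit of the proof of Theorem \ref{thm_unimod}, should then extend this splitting of the tangent bundle across the singular strata, after which de Rham's theorem produces the desired isometric decomposition $M = B \times N$, with $B$ compact Einstein and $N$ the non-compact homogeneous Einstein solvmanifold.

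The main obstacle lies at singular and exceptional orbits, where the dimension of $\Ll$-orbits drops. The subgroup $\Ss$ is defined a priori only via the principal isotropy $\Hg$, so controlling the behaviour of its orbits near singular points requires understanding how the Iwasawa structure on principal orbits interacts with the larger compact isotropies at singular points. In the unimodular case of Theorem \ref{thm_unimod}, singular orbits give totally geodesic sub-symmetric spaces of the non-compact factor, a purely Lie-theoretic phenomenon that is used crucially in conjunction with the maximum principle; for non-unimodular $\Ll$, the analogous rigidity for Iwasawa solvmanifolds has no obvious counterpart, and it is precisely here that genuinely new input, beyond a direct transcription of the proof of Theorem \ref{thm_unimod}, appears to be needed.
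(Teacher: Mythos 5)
The statement you are attempting to prove is \Cref{conj_split}, which the paper explicitly \emph{leaves open}: it is stated as a conjecture, and only the unimodular case (\Cref{thm_unimod}) is proved. There is therefore no proof in the paper to compare against; a correct proof here would be a new result.

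Beyond the singular-orbit obstacle that you already flag as requiring genuinely new input, your first step contains a more fundamental gap that undermines the entire strategy. You assert that a principal orbit $\Ll/\Hg$ inherits a negative homogeneous \emph{Einstein} metric from $(M,g)$, and then invoke the Alekseevskii conjecture to produce an Iwasawa-type solvable $\Ss$. But the induced metric on an orbit of an Einstein manifold is essentially never Einstein: by the O'Neill formula \eqref{eqn_EV}, the intrinsic Ricci curvature $\ric^{\Ll}$ of an orbit differs from $\ric_g|_{\ve^{\Ll}}$ by the shape-operator term $\la L_{\mcv}\, U, U\ra$, the O'Neill term $\Vert AU \Vert^2$, and the $\nabla L$ term, none of which vanish a priori. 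Indeed, even in the unimodular case the paper shows that the $\N$-orbits are only \emph{nilsolitons} (\Cref{thm_Naction}, \ref{item_Nnilsol}), not Einstein, and the conclusion that the $\Ll$-orbits are symmetric spaces (hence Einstein) is reached only after the polarity result (\Cref{prop_Lpolar}), the minimal-parabolic transitivity (\Cref{prop_Qtransitive}), and a separate Bochner argument (\Cref{prop_Ldotpsym}), all of which rely on unimodularity at various points (most critically \Cref{lem_hvbp}, where $\det \Ad(f)|_{\ngo}=1$ is used). The sentence claiming that the modular character "is precisely the correction needed" is not an argument but a hope; without a substitute for it, step 1 fails and nothing downstream can be salvaged.
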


In the Ricci flat case, from the Cheeger-Gromoll splitting theorem one can deduce the following:
% We  turn now to the Ricci flat case:
%let us mention that by the Theorem of
%Bochner \cite{Bochner1948} a compact Riemannian manifold with negative
%Ricci curvature must have finite isometry group.

\begin{teointro}\label{thm_rf_intro}
Let $(M^n,g)$ be a complete, Ricci flat Riemannian manifold, and let $\G$ be a Lie group acting cocompactly, properly and  isometrically. 
Then, the universal cover  splits isometrically as a Riemannian product $\tilde M = \bar M \times \RR^k$, with $\bar M$ compact.  
\end{teointro}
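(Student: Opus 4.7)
My approach is to pass to the universal Riemannian cover $\pi:\tilde M\to M$, apply the Cheeger--Gromoll splitting theorem to extract a maximal Euclidean de Rham factor, and then rule out any further non-compact pieces by combining cocompactness with the no-line property of the remaining factor.

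First I would transport the setting to $\tilde M$, which inherits a complete Ricci flat metric. The key claim is that $\Isom(\tilde M)$ itself acts properly and cocompactly on $\tilde M$. Properness is automatic for isometry groups of complete Riemannian manifolds. For cocompactness, choose a compact $K\subset M$ with $\G\cdot K=M$ and a compact lift $\tilde K\subset\tilde M$ with $\pi(\tilde K)=K$. Given $\tilde p\in\tilde M$, write $\pi(\tilde p)=g\cdot q$ with $g\in\G$ and $q\in K$, and pick $\tilde q\in\tilde K$ over $q$. Since $\tilde M$ is simply connected, any isometry of $M$ lifts to an isometry of $\tilde M$, so choose such a lift $\tilde g$ of $g$. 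The two lifts $\tilde g\cdot\tilde q$ and $\tilde p$ of $g\cdot q$ differ by a deck transformation $\gamma\in\pi_1(M)\subset\Isom(\tilde M)$, whence $\gamma\tilde g\cdot\tilde q=\tilde p$ and $\Isom(\tilde M)\cdot\tilde K=\tilde M$.

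Next I would iterate the Cheeger--Gromoll splitting theorem. Since $\tilde M$ is complete, simply connected and Ricci flat, this yields a canonical isometric decomposition
\[
\tilde M=\bar M\times\RR^k
\]
in which $\bar M$ contains no line. By the uniqueness of the Euclidean de Rham factor, every isometry of $\tilde M$ respects the splitting, so $\Isom(\tilde M)=\Isom(\bar M)\times\Isom(\RR^k)$. Because $\Isom(\RR^k)$ acts transitively on $\RR^k$, the quotient $\bar M/\Isom(\bar M)$ is homeomorphic to $\tilde M/\Isom(\tilde M)$, and is therefore compact by the previous step.

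Finally, I would rule out $\bar M$ being non-compact by a standard line construction. If it were not compact, pick a sequence $p_n\in\bar M$ with $d(p_0,p_n)\to\infty$, and minimizing segments $\sigma_n$ from $p_0$ to $p_n$. Using cocompactness of $\Isom(\bar M)$, isometrically translate each $\sigma_n$ so that its midpoint lies in a fixed compact set, and pass to an Arzel\`a--Ascoli subsequential limit. The resulting curve is a minimizing geodesic defined on all of $\RR$, i.e.\ a line in $\bar M$, contradicting the defining no-line property. The delicate point is the lifting step: the $\G$-action itself need not lift to $\tilde M$ as a group homomorphism when $\G$ is not simply connected, and one must instead combine individual isometry lifts with the deck group to realize the desired cocompact $\Isom(\tilde M)$-action.
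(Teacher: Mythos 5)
Your proof is correct and follows essentially the same route as the paper: pass to the universal cover, show $\Isom(\tilde M)$ acts cocompactly, iterate the Cheeger--Gromoll splitting to write $\tilde M = \bar M \times \RR^k$ with $\bar M$ containing no line, observe that isometries respect this decomposition so that cocompactness descends to $\bar M$, and then conclude $\bar M$ is compact by the standard ray-translation line construction (which the paper isolates as \Cref{lem_line}). Your only minor variation is invoking uniqueness of the Euclidean de Rham factor to see that isometries preserve the splitting, whereas the paper argues directly that all lines lie in the $\RR^k$ factor and isometries map lines to lines; the content is the same.
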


% and we include it for the sake of completeness. 
For an effective action it also follows that  $\G_0 \subset \Isom(\RR^k)$.
Concerning examples of compact Ricci flat manifolds we refer to \cite{Joy07}.
Moreover, let us recall that homogeneous Ricci flat manifolds are flat \cite{AlkKml}.

Assuming compactness of the orbit space is again necessary: already in dimension
four there exist toric gravitational instantons (i.e.~ Ricci flat and with $T^2$-symmetry), 
the Riemannian Kerr family (including Schwarzschild's  metric) and the Chen-Teo family:  see \cite{CT11,BG23} and references therein. 
Concerning cohomogeneity one examples, see \cite{D-W,Bo1999,BDW2015, CH22}.

Regarding the sign of the Einstein constant, in the positive case numerous examples of non-product Einstein manifolds with symmetry are known, e.g.~\cite{Page78,Bohm98,W-W, FH17,Wang2012}.

We now briefly outline the proof of \Cref{thm_unimod}. By the Slice Theorem, there is an open 
and dense subset $M_{\sf reg} \subset M$ on which the quotient map $\pi^\Ll : M \to  \Ll\backslash M$ is a Riemannian submersion. Using O'Neill's curvature formulas, the Einstein equation becomes  a coupled system of
 elliptic PDE's, involving the Lie-theoretic data of the group $\Ll$. 
 We analise this system by applying the maximum principle to various functions which satisfy interesting differential inequalities. 
 Note that due to the possible non-smoothness of the 
orbit space  a maximum principle can only be applied to functions
which are known to admit a local maximum in the interior (smooth) part
$\pi^\Ll(M_{\sf reg})$ of  $\Ll\backslash M$ (and not at  boundary points). 
We would like to stress  that  this is not just a technicality
but a serious issue, which we were only able to overcome in case that $\Ll$ is
unimodular: see Lemma \ref{lem_hvbp}.

As a first step, we prove  in \Cref{thm_ricneg}
that $\Ll$ must be semisimple,  assuming only that the Ricci curvature is negative. 
To overcome the non-smoothness of the orbit space, we argue on the frame bundle 
$\pi : FM \to M$, which is a principal $\Or(n)$-bundle. The  natural lifted action of $\Ll$ on $FM$ is proper, isometric and free, thus the quotient $\Ll \backslash FM$ is a smooth Riemannian manifold with a right isometric and proper $\Or(n)$-action for which $\Ll \backslash FM /\Or(n) \simeq \Ll\backslash M$. Arguing as in the proof of \cite[Thm.~7.1]{alek_sol}, we consider the so-called $\beta^+$-volume density, a certain weighted determinant of the orbit metrics; 
if $\Ll$ is abelian, this is precisely the relative volume density of the orbits, cf.~\cite{Rong98,Lot20}. The crucial difference here is that the volume density must be computed with respect to the pull-back metric $\pi^* g$, which is only positive semi-definite. At present we are only able to extend 
the $\beta^+$-volume density functional to possibly degenerate metrics,
assuming  unimodularity of $\Ll$.

In a second step, we consider all possible Iwasawa decompositions $\Ll = \K \A\N$ for the semisimple Lie group $\Ll$. By applying the results of  \cite{alek_sol} to the action of each of the \emph{Borel subgroups} $\G := \A \N$, we are able to deduce that the action of $\Ll$ is polar (i.e.~the distribution $(\ker \pi^\Ll)^\perp$ is integrable). Next, we apply a generalisation of the algebraic curvature estimates from \cite[$\S$12]{alek_sol} that holds without homogeneity, and show that the (identity components of the) minimal parabolic subgroups $\Q \leq \Ll$ act transitively on $\Ll$-orbits. If $\Ll$ is split-semisimple (e.g.~$\Ll= \Sl(n,\RR)$) then $\Q = \G$, and it follows that the $\Ll$-orbits are symmetric spaces. The same is true in general, but to prove this we perform a delicate analysis of the Lie-algebraic structure of the orbits, and  use a Bochner-type argument inspired by the work of Jablonski and Petersen \cite{JblPet14} in the homogeneous case. In the final step, following ideas in \cite{NT18}, we apply the Bochner technique for Killing fields restricted to a section of the $\Ll$-action, and  we conclude that the orbits are totally geodesic.

% Perhaps surprisingly, the above fails if one replaces Einstein by expanding Ricci soliton {\alert sure?}.

The short proof 
of Theorem \ref{thm_rf_intro}
 is just a standard application of the Cheeger-Gromoll splitting theorem \cite{ChGr71} and the Bochner technique for Killing fields:
see $\S$\ref{sec_Ricciflat}.

\vs

%In  \cite{DVSL21} polar actions with Einstein orbits on symmetric spaces are crucial to %construct foliations by isoparametric hypersurfaces.

The article is organised as follows.  In $\S$\ref{sec_Einstein_sym} we review some known results on the Einstein equation in the presence of symmetries (including the main technical results from  \cite{alek_sol}). In $\S$\ref{sec_vol}, we introduce weighted determinants and their corresponding volume densities on the orbits of an isometric action. The proof of \Cref{thm_unimod} is done step by step in $\S\S$4--8. \Cref{thm_rf_intro} is proved in $\S$\ref{sec_Ricciflat}. Finally, we also include some appendices in which we review some basic terminology about proper isometric actions and polar actions (\Cref{app_isom_actions}), the frame bundle geometry (\Cref{sec_FM}) and some important properties of semisimple Lie groups (\Cref{app_sslie}) .

For most of the article, the notation will be consistent with  \cite{alek_sol}, 
except for the mean curvature vector of the orbits, which we will denote by $\mcv$ (and not $N$). Recall that we denote  shape operators by $L_X$, and Lie derivatives by $\lca_X$.

\section{The Einstein equation with symmetries}\label{sec_Einstein_sym}

Let $\G$ be a connected Lie group acting properly, effectively and isometrically on a complete Riemannian manifold $(M^n,g)$. We refer the reader to \Cref{app_isom_actions} for basic properties about these type of actions. The Slice Theorem implies that the orbit space $B = \G \backslash M_{\sf reg}$ corresponding to the regular part $M_{\sf reg}$ is a smooth manifold.  O'Neill's curvature formulas  yield  an expression for the Ricci curvature in vertical direcions, 
\begin{equation}\label{eqn_EV}
   \ric_g (U,U) =  \ric^\G(U,U) + \la  L_{\mcv^\G} U, U \ra + \Vert AU\Vert^2 - \sum_{i} \left\la \left( \nabla_{X_i} L \right)_{X_i} U, U \right\ra,
\end{equation}
in horizontal directions,
\begin{equation}\label{eqn_EH}
 \ric_g(X,X) =  \ric^B(X,X) - 2 \Vert A_X \Vert^2 - \Vert L_X\Vert^2 + \la \nabla_X \mcv^\G , X \ra,
\end{equation}
and also an off-diagonal equation which we will not use in this paper. See \cite[Thm.~2.2]{alek_sol}. 
% and notice that the second equality in both formulas holds independently of the Einstein condition. 
Regarding notation, $\{ X_i \}$ is any horizontal local orthonormal frame; $U \in \Gamma(\ve^\G)$ (resp.~ $X \in \Gamma(\ho^\G)$) denotes a vertical (resp.~horizontal) vector field; $A = A^\G$ is O'Neill's integrability tensor; $L_X$ the shape operator of the orbits in the direction of $X$ and $\mcv^\G$ their mean curvature vector;  $\ric^\G$ denotes the Ricci curvature of the orbits, $\ric^B$ that of the quotient metric on $B$; and finally, the pointwise norms are computed via
\[
    \Vert AU \Vert :=  \sum_i \Vert A_{X_i} U\Vert^2, \qquad \Vert A_X \Vert^2 := \sum_i \Vert A_X X_i\Vert^2 = \sum_r \Vert A_X U_r \Vert^2, \qquad \Vert L_X \Vert^2 := \sum_r \Vert L_X U_r \Vert^2,
\] 
where $\{ U_r\}$ is any vertical orthonormal frame.

We denote by $\N\lhd \G$  the nilradical of $\G$, and we use the superscript $\N$ to denote objects corresponding to the Riemannian submersion associated with the induced action of $\N$ on $M$:
\[
  \pi^\N : M \to \N\backslash M =: P.
\]
Assuming that $\N$ acts freely and polarly, $\pi^\N$ becomes a principal $\N$-bundle, and  there exists a section $\sigma : \Sigma_\N \to M$ for the $\N$-action with polar group $\Pi := \N_\Sigma$. In this context, the $\N$-invariant metric $g$ on $M$ can be characterised by  the following data: 
\begin{itemize}
  \item a Riemannian metric $g^P$ on $P$;
  \item a flat principal connection $\ho^\N$ on $M \to \N\backslash M$;
  \item a $\Pi$-equivariant map  $h : \Sigma_\N \to \Sym_+^2(\ngo^*)$, defined by 
  \[
      s\mapsto h(s) : \ngo \times \ngo \to \RR, \qquad  h(s)(U,V) := g(U^*, V^*)_{\sigma(s)}.
  \]
  where $U^*, V^*$ denote the Kiling fields on $M$ associated to $U,V \in \ngo$.
\end{itemize}  
In the last bullet point, the action of $\Pi \leq \N$ on inner products is given by 
\[
  \rho \cdot \ip = \left\la \Ad(\rho^{-1}) \, \cdot \, , \Ad(\rho^{-1}) \, \cdot \, \right\ra, \qquad \rho \in \Pi, \quad \ip \in \Sym_+^2(\ngo^*).
\]
By abuse of notation,  we also denote by $g_p^\N$ the metric induced by $g$ on the orbit $\N \cdot p$.
For the most general version of the above characterisation of invariant metrics (without assuming that the action is free and polar)  we refer the reader to \cite{BH87}.

\begin{remark}
The \emph{quotient metric} $g^P$  is characterised by the fact that $\pi^\N$ is a Riemannian submersion. Also, recall that after fixing a background inner product $\ip$ on $\ngo$, we have an identification $\Sym^2_+(\ngo^*) \simeq \Gl^+(\ngo) / \SO(\ngo,\ip)$,  a symmetric space of non-compact type. 
\end{remark}

The main technical results of \cite{alek_sol} essentially state the negative Einstein equation with cocompact $\G$-symmetry implies that $\N$ acts polarly, with Ricci soliton orbits, $h$ is a harmonic map between Riemannian manifolds \cite{ES64} (considering on $\Sym^2_+(\ngo^*)$ the symmetric metric), and  the quotient metric satisfies an Einstein-like equation. More precisely:

\begin{theorem}\cite{alek_sol}\label{thm_Naction}
Let $(M^n,g)$, $\G$ be as above, assume that $(M^n,g)$ is Einstein with $\ric_g = -g$, $M_{\sf reg} = M$, and that $B = \G\backslash M$ is compact. Then, the induced isometric action of the nilradical $\N \leq \G$ on $(M^n, g)$ satisfies the following:
\begin{enumerate}[(i)]
    \item The   action of $\N$ is polar (equivalently, $\ho^\N$ is flat: $A^\N = 0$); \label{item_Npolar}
    \item  The $\N$-orbits are locally isometric to nilsolitons, and we have \label{item_Nnilsol}
    \begin{equation}\label{eqn_RicveN}
        % \Ric^{\N} = - \Id_{\ve^\N} - L_{\mcv^\N}, \qquad \scal^\N = -\dim \ngo +\tr \beta^+.
        \ric^{\N} = - g^{\N} - \tfrac12 \,  \lca_{\mcv^\N} \left(g^{\N} \right), \qquad \scal^\N = -\dim \ngo +\tr \beta^+, \qquad L_{\mcv^\N} = - \beta^+.
    \end{equation}
    % {\alert check uses because formula was changed}
    % Here $\mcv^\N \in \Gamma(\ho^\N)$ is the mean curvature vector of the $\N$-orbits in $M$. 
    \item For any $\N$-section $\Sigma_\N$, the `orbit metrics map' $h : \Sigma_\N \to \Sym^2_+(\ngo^*) \simeq \Gl^+(\ngo)/\SO(\ngo)$ is a harmonic map; \label{item_HM}
    \item The Ricci curvature of the base satisfies \label{item_RicP}
    \[
        \ric^P = -g^P + L^* L,
    \]
    where $L^* L \in \End(TP)$ is given by $\la (L^* L) X, Y \ra = \tr (L_X \, L_Y)$, $X,Y\in \Gamma(TP)$.
\end{enumerate}
If in addition $\G$ is a completely solvable Lie group admitting a non-flat left-invariant Einstein metric, and acts almost-freely on $M$, then:
\begin{enumerate}[(i)]
    \setcounter{enumi}{4}
    % {\alert explain notation}
    \item  $\mcv^\N$ is tangent to the $\G$-orbits.  \label{item_HN_Gvert}
\end{enumerate}
\end{theorem}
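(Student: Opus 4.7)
The plan is to analyze the induced Riemannian submersion $\pi^\N : M \to P = \N\backslash M$ using O'Neill's formulas \eqref{eqn_EV}--\eqref{eqn_EH} with $\G$ replaced by the nilradical $\N$. Since all geometric quantities of interest are $\G$-invariant and $M_{\sf reg}=M$, every smooth $\G$-invariant function descends to the compact quotient $B = \G\backslash M$ and attains its extrema at interior points, making the strong maximum principle available on the smooth manifold $P$. The overall strategy will be to construct judicious $\G$-invariant scalar functions whose Laplacians inherit sign properties from \eqref{eqn_EV}--\eqref{eqn_EH}, and then chain the resulting rigidity statements to obtain (i)--(iv); for (v), additional Lie-algebraic input from the theory of standard Einstein solvmanifolds will be required.

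For part (i), the plan is to exhibit a $\G$-invariant function $F$ with $\Delta F \geq \|A^\N\|^2$ modulo nonnegative lower-order terms. A natural candidate is a scalar built from $\tr\beta^+$, or more generally from the nilsoliton obstruction at each $\N$-orbit. Combining a trace of \eqref{eqn_EV} over a vertical frame with a trace of \eqref{eqn_EH} over a horizontal frame, and exploiting the purely algebraic identities satisfied by $\beta^+$ on the nilpotent Lie algebra $\ngo$, one should arrive at such an inequality; the strong maximum principle at an interior maximum of $F$ then forces $A^\N\equiv 0$. This is the main obstacle: identifying the right coercive functional and the precise algebraic identities that convert the signed mixed terms in O'Neill's formulas into a useful nonnegative combination is delicate, and is the technical heart of \cite{alek_sol}.

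Once polarity is in hand, (ii) should follow by comparing two distinct contractions of the vertical Einstein equation. A direct vertical trace of \eqref{eqn_EV} produces a scalar relation among $\scal^\N$, $\|\mcv^\N\|^2$ and $\divg\mcv^\N$, while pairing \eqref{eqn_EV} with $\beta^+$ and invoking the algebraic identities for $\beta^+$ produces a second relation. A further maximum-principle argument applied to a quantity such as $\|L_{\mcv^\N}+\beta^+\|^2$ forces pointwise equality, giving both the nilsoliton equation $\ric^\N = -g^\N - \tfrac12\lca_{\mcv^\N} g^\N$ and the stated values of $\scal^\N$ and $L_{\mcv^\N}$. For (iii), the mixed-direction component of the Einstein equation (the one not reproduced in the excerpt) should, after substituting (i) and (ii), reduce precisely to the vanishing of the tension field of $h : \Sigma_\N \to \Sym_+^2(\ngo^*)$, viewed in the symmetric metric on the target.

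For (iv), the plan is to substitute (i), (ii), (iii) into \eqref{eqn_EH}: the term $\|A_X\|^2$ vanishes by polarity, the harmonic map equation absorbs $\la \nabla_X \mcv^\N, X\ra$, and the only curvature term surviving should give exactly $\ric^P = -g^P + L^* L$. Finally, for (v), the hypothesis that $\G$ is completely solvable and admits a non-flat left-invariant Einstein metric brings in the structure theory of standard Einstein solvmanifolds: the pre-Einstein derivation of $\ngo$ coincides with $\beta^+$, hence with $-L_{\mcv^\N}$ by (ii); since this derivation is the restriction to $\ngo$ of $\ad Y$ for some $Y$ in a reductive complement of $\ngo$ in $\ggo$, the vector $\mcv^\N$ must be tangent to the $\G$-orbits under the almost-freeness assumption. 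The delicate point here is to bridge the purely Lie-algebraic identification at a single orbit and the global geometric tangency statement along all of $M$.
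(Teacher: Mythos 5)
This theorem is a \emph{citation}: in the paper, items (i), (ii) and (v) are simply attributed to specific results of \cite{alek_sol} (Thm.~8.1(i), Thm.~8.1(iii) with Prop.~9.7 and Cor.~9.8, and Thm.~11.3, respectively), and only (iii) and (iv) involve a short derivation from those results. Your proposal, by contrast, tries to reconstruct the entire proof of \cite{alek_sol} from scratch; for (i) and (ii) you openly acknowledge that the coercive functional and the algebraic identities driving the maximum-principle argument are the technical heart of that paper and are not supplied here, so your outline for those items cannot stand on its own. This is not a flaw in comprehension of the result, but it means the proposal does not actually prove (i)--(ii) or (v).

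There are also concrete errors in the two parts you do try to fill in. For (iii), the paper does \emph{not} use the mixed-direction Einstein equation: it substitutes (i) and (ii) into the \emph{vertical} equation \eqref{eqn_EV} for the $\N$-submersion. With $A^\N=0$ and $\ric^\N + L_{\mcv^\N} = -\Id_{\ve^\N}$, \eqref{eqn_EV} collapses to $\sum_i (\nabla_{X_i}L)_{X_i}=0$, which is then identified with the tension field of $h$ via \cite[Prop.~4.17]{Lot07}; the mixed equation gives different information. For (iv), the claim that the harmonic map equation ``absorbs'' $\la\nabla_X\mcv^\N,X\ra$ is a gap: harmonicity is the vanishing of the horizontal trace $\sum_i(\nabla_{X_i}L)_{X_i}$, which does not control $\nabla\mcv^\N$ on $\ho^\N$. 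The paper instead invokes the separate fact $\nabla\mcv^\N|_{\ho^\N}=0$ from \cite[Thm.~8.1(ii)]{alek_sol} (cf.~Cor.~9.9 there), and only then does \eqref{eqn_EH} with $A^\N=0$ reduce to $\ric^P=-g^P+L^*L$. Your sketch of (v) is in the right spirit (identifying $-L_{\mcv^\N}$ with the pre-Einstein derivation), but again relies on Lie-theoretic structure established in \cite[Thm.~11.3]{alek_sol} rather than deriving it.
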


\begin{proof}
\Cref{item_Npolar} is \cite[Thm.~8.1(i)]{alek_sol}, and \Cref{item_Nnilsol} follows from \cite[Thm.~8.1(iii), Prop.~9.7 $\&$ Cor.~9.8]{alek_sol}. For \Cref{item_HM}, using the information from the previous two items in \eqref{eqn_EV} (for the $\N$-submersion) yields 
\[
    \sum_i \left(\nabla_{X_i} L\right)_{X_i} = 0.
\]
for a local, orthonormal, $\N$-horizontal frame $\{X_i\}$. Rewriting this tensorial expression in local coordinates, the claim follows from \cite[Prop.~4.17]{Lot07}.  \Cref{item_RicP} follows from \eqref{eqn_EH} using that $A^\N = 0$ and $\nabla \mcv^\N |_{\ho^\N} = 0$ (\cite[Thm.~8.1., (ii)]{alek_sol}, cf.~also \cite[Cor.~9.9]{alek_sol}). 
 % (notice that Borel subgroups always satisfy \cite[Assumption 11.1]{alek_sol}). 
 Finally, \Cref{item_HN_Gvert} is \cite[Thm.~11.3]{alek_sol}.
 % \Cref{item_nablaN} follows from \cite[Thm.~8.1, (ii)--(iii) $\&$ Cor.~9.9]{alek_sol}.
\end{proof}

\begin{remark}
When $\N$ is simply connected, there exists a vertical vector field $X_{\beta^+} \in \Gamma(\ve^\N)$ which on each orbit satisfies $\lca_{\mcv^\N}(g^\N) = \lca_{X_{\beta^+}}(g^\N)$. Thus, in this case, the orbits are indeed Ricci solitons (cf.~\cite{soliton}). In general they are only locally isometric to one, since the soliton vector field does not commute with deck transformations \cite{solvsolitons}. 
\end{remark}

% Since $\N$ acts freely, the fact that the $\N$-orbits are nilsolitons follows from \cite[Prop.~9.7]{alek_sol}. 

\section{Volume densities on Riemannian manifolds with symmetries}\label{sec_vol}

The goal of this section is to introduce a set of weighted volume densities for the orbits of an isometric group action in a complete Riemannian manifold. These will play a crucial role in the proof of our main results. They were first considered in \cite{BL17} (in the homogeneous case), and later refined in \cite{alek_sol}. Essentially, they generalise the usual Riemannian volume density function $\sqrt{\det g_{ij}}$ by introducing weights. However, the need for `fixing the right gauge' makes them very hard to define explicitly in terms of the metric coefficients $g_{ij}$. This is illustrated by the simplest non-trivial case in Example \ref{ex_heis}. Further complications arise when studying their behavior near singular orbits, a case which was not treated in \cite{alek_sol}. To overcome these issues, we use a parametrisation of the space of inner products \eqref{eqn_qcdoth} by  lower-triangular matrices, that allows us to introduce the `weighted determinant' function.

\subsection{The volume density of the orbits}\label{sec_volume_density}

Consider a proper isometric action of a unimodular Lie group $\N$ on a Riemannian manifold $(M^n,g)$, with discrete principal isotropy groups. A choice of basis $\{U_i\}_{i=1}^{\dim \ngo}$ for $\ngo$ yields, on the principal part, a global vertical frame $\{U_i^*\}_{i=1}^{\dim \ngo}$ consisting of Killing fields. We define the volume density of the orbits via $v_\N : M \to \RR_{>0}$,
\[
    v_\N (p) := \sqrt{\det g_{ij}(p)},  \qquad g_{ij}(p) := g(U_i^*, U_j^*)_p.
\]
Notice that $v^2_\N$ is well-defined and smooth on all of $M$, whereas $v_\N$ is only smooth on the regular part $M_{\sf reg}$, i.e.~where the Killing fields do not vanish. Of course $v_\N$ is still a continuous function on all of $M$, and it vanishes precisely on singular orbits.

\begin{lemma}\label{lem_vN_Ninv}
If $\N$ is unimodular then the volume density $v_\N$ is $\N$-invariant. 
\end{lemma}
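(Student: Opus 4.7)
The plan is to compute $v_\N(n\cdot p)$ directly and show it equals $v_\N(p)$ using that $|\det \Ad(n)| = 1$ for all $n \in \N$, which is exactly the unimodularity hypothesis. Since $v_\N^2$ is smooth and $v_\N$ is continuous on all of $M$, it suffices to verify the identity on the regular part $M_{\sf reg}$.

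First I would work out how the action $L_n : M \to M$ transports the fundamental Killing fields. For any $U \in \ngo$,
\begin{equation*}
    (L_n)_* U^*|_p = \ddt\Big|_{t=0} n \cdot \exp(tU)\cdot p = \ddt\Big|_{t=0} \big(n\exp(tU)n^{-1}\big)\cdot (n\cdot p) = \big(\Ad(n)U\big)^*\big|_{n\cdot p}.
\end{equation*}
Writing $\Ad(n^{-1})U_j = \sum_k A_{kj}(n) U_k$, so that $A(n) := [\Ad(n^{-1})]$ is the matrix of $\Ad(n^{-1})$ in the basis $\{U_i\}$, the fact that $L_n$ is an isometry gives
\begin{equation*}
   g_{ij}(n\cdot p) = g(U_i^*, U_j^*)_{n\cdot p} = g\big((L_{n^{-1}})_* U_i^*,\, (L_{n^{-1}})_* U_j^*\big)_p = \sum_{k,l} A_{ki}(n) A_{lj}(n)\, g_{kl}(p),
\end{equation*}
that is, $g(n\cdot p) = A(n)^T g(p) A(n)$ as symmetric matrices.

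Taking determinants yields $\det g(n\cdot p) = \big(\det \Ad(n^{-1})\big)^2 \det g(p)$. Since $\N$ is unimodular, $|\det \Ad(n)| = 1$ for every $n \in \N$, so $\det g(n\cdot p) = \det g(p)$, and taking positive square roots gives $v_\N(n\cdot p) = v_\N(p)$, as desired.

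There is no real obstacle here: the entire argument is a two-line change-of-basis computation, and the only input beyond the definitions is the standard equivalence between unimodularity and $|\det \Ad| \equiv 1$. The only point worth being careful about is that the identity is derived on the regular part (where $v_\N$ is smooth and the $U_i^*$ are a frame of the vertical distribution), and then extended to all of $M$ by continuity of $v_\N$.
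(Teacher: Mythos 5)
Your proof is correct and takes essentially the same route as the paper: compute how $L_n$ transports the Killing fields (via $\Ad$), use that $L_n$ is an isometry to relate $g_{ij}(n\cdot p)$ to $g_{ij}(p)$, and then take determinants and invoke $|\det\Ad| \equiv 1$. The only (harmless) redundancy is the final remark about restricting to $M_{\sf reg}$ and extending by continuity — the matrix identity $g(n\cdot p) = A(n)^T g(p) A(n)$ and hence $\det g(n\cdot p) = \det g(p)$ holds pointwise on all of $M$, so no density/continuity step is actually needed.
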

\begin{proof}
For $q = x \cdot p$, $x\in \N$, we have
\[
         (dL_x)_p (U^*)_p = (\Ad(x) U)^*_{q},
\]
see e.g.~\cite[Appendix A]{BL18}, thus using that $L_x$ is an isometry we obtain
\[
    g_{ij}(p) =  g( (dL_x)_p (U_i^*)_p, (dL_x)_p (U_j^*)_p) = g((\Ad(x) U_i)^*, (\Ad(x) U_j)^*)_q.
\]
Unimodularity now implies that
\[
    v_\N (p) = (\det \Ad(x)) v_\N (q) = v_\N (q).
\]
\end{proof}

Notice that \Cref{lem_vN_Ninv} does not hold if $\N$ is not unimodular. 

We say an isometry $f\in \Isom(M,g)$ normalises $\N$, if the image of $\N$ in $\Isom(M,g)$ under the action map is normalised by $f$. In this case, $f_*$ maps $\N$-Killing fields to $\N$-Killing fields, and hence it defines a linear map
\[
    \Ad(f)|_\ngo : \ngo \to \ngo,
\]
where $\Ad$ is the adjoint representation of $\Isom(M,g)$. The same argument as in \Cref{lem_vN_Ninv} shows:

\begin{lemma}\label{lem_vN-finv}
If $f\in \Isom(M,g)$ normalises $\N$ then  
\[
        v_\N(f(p)) = \det \left(\Ad(f)|_\ngo\right)^{-1}  v_\N(p).
\]
\end{lemma}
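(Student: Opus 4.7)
The plan is to adapt the proof of \Cref{lem_vN_Ninv} essentially verbatim, with the single new ingredient being the matrix of $\Ad(f)|_{\ngo}$ in the chosen basis $\{U_i\}$.

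First, I would record the pushforward formula for Killing fields under $f$. Since $f$ normalises the image of $\N$ in $\Isom(M,g)$, one has $f \circ L_x = L_{c_f(x)} \circ f$ where $c_f(x) = f x f^{-1}$ is the conjugation. Differentiating the one-parameter subgroup $t \mapsto \exp(tU)$ and applying the chain rule gives the identity
\[
 (df)_p (U^*)_p \;=\; \bigl(\Ad(f)\,U\bigr)^*_{f(p)}
\]
for every $U \in \ngo$, exactly parallel to the identity used in the proof of \Cref{lem_vN_Ninv}.

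Second, I would combine this with the fact that $f$ is an isometry to transport the inner products. Applying $(df^{-1})_{f(p)}$ and using the displayed formula with $f^{-1}$ in place of $f$,
\[
 g_{ij}(f(p)) \;=\; g\bigl(U_i^*, U_j^*\bigr)_{f(p)} \;=\; g\Bigl( (\Ad(f^{-1})U_i)^*,\, (\Ad(f^{-1})U_j)^* \Bigr)_p .
\]
Let $A = (a_{ki}) \in \Gl(\dim \ngo, \RR)$ denote the matrix of $\Ad(f^{-1})|_{\ngo} = (\Ad(f)|_{\ngo})^{-1}$ with respect to the basis $\{U_i\}$, so that $\Ad(f^{-1})U_i = \sum_k a_{ki} U_k$. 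Bilinearity of the metric then yields the matrix identity $\bigl(g_{ij}(f(p))\bigr) = A^T \bigl(g_{ij}(p)\bigr) A$.

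Third, I would take determinants and square roots to obtain
\[
 v_\N(f(p))^2 \;=\; (\det A)^2 \, v_\N(p)^2, \qquad \text{hence} \qquad v_\N(f(p)) \;=\; \bigl|\det \Ad(f)|_{\ngo}\bigr|^{-1} v_\N(p),
\]
which is the stated formula (with the understanding, as in \Cref{lem_vN_Ninv}, that the determinant is taken up to sign, since both sides of the equation are positive). I do not expect any genuine obstacle here: the computation is a one-line generalisation of the one used for $x \in \N$, the only new point being that for $f$ normalising but not lying in $\N$ the number $\det \Ad(f)|_{\ngo}$ need not equal $1$, so it survives in the final formula.
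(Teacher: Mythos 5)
Your proof is correct and takes essentially the same approach the paper intends. The paper gives no separate proof for this lemma, simply stating ``The same argument as in Lemma~\ref{lem_vN_Ninv} shows,'' and your write-up is exactly that argument: the pushforward identity $(df)_p(U^*)_p = (\Ad(f)U)^*_{f(p)}$, the resulting Gram-matrix conjugation $\bigl(g_{ij}(f(p))\bigr) = A^T\bigl(g_{ij}(p)\bigr)A$ with $A$ the matrix of $\Ad(f^{-1})|_\ngo$, and taking determinants. Your remark that the paper's $\det$ implicitly means $|\det|$ is also consistent with how the paper uses this formula downstream (e.g.\ in Lemmas~\ref{lem_vbpautinv} and \ref{lem_vbp_finv}), where the weighted determinant is built from absolute values.
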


It is well-known (see e.g.~ \cite[Lemma 3.2]{alek_sol}) that the mean curvature vector $\mcv^\N$ of the principal $\N$-orbits is given by
\begin{equation}\label{eqn_mcv_logvn}
    \mcv^\N = -\nabla \log v_\N.
\end{equation}
It also follows from a standard computation that  the first-variation of $ \log v_\N$ is 
\[
    X(\log v_\N) = \tr L_X, \qquad X\in \ho^\N. 
\]

\subsection{The weigthed determinant}

Our goal in this section is to introduce  an extension of the determinant function on $n\times n$ real matrices.  Recall that the absolute value of the usual determinant function, $|{\det}| : \Gl_n(\RR) \to \RR^+$, could be characterised as the unique Lie group homomorphism whose differential at the identity is the trace $A \in \glg_n(\RR) \mapsto \tr A$ (the latter is a Lie algebra homomorphism since commutators are traceless).

Let now $W \in \glg_n(\RR)$ be a fixed diagonal matrix, the `weights'. We have the corresponding subgroups of $\Gl_n(\RR)$: $\G_W, \U_W, \Q_W , \K_W$ defined in \cite[$\S$4]{BL17} (cf.~also \cite[$\S$7]{GIT20}, \cite[$\S$4]{HSS}), and their  Lie algebras denoted by the corresponding gothic letters. Roughly speaking, if $V_1 \oplus \cdots \oplus V_l$ is the eigenspace decomposition of $W$, corresponding respectively to the different eigenvalues $w_1 < \cdots < w_l$, then $\ug_W$ consists of all block-lower-triangular matrices, $\U_W := \exp(\ug_W)$, and 
\[
  \G_W := Z(W) = \{ h\in \Gl_n(\RR) : hW = W h \}, \qquad \Q_W := \G_W \U_W, \qquad \K_W :=  \Q_W \cap \Or(n).
\]

\begin{lemma}
The linear functional 
\[
    \tr_W : \qg_W \to \RR, \qquad A \mapsto  \tr_W(A) := \tr(A W),
\] 
is a Lie algebra homomorphism.
\end{lemma}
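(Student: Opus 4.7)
The plan is to verify that $\tr_W$ vanishes on every bracket $[A,B]$ with $A,B\in\qg_W$, which, since the target $\RR$ is abelian, is equivalent to the stated claim. I would use the vector-space decomposition $\qg_W=\ggo_W\oplus\ug_W$ and, by bilinearity, check the three cases corresponding to the possible placements of $A$ and $B$ in the two summands.

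The case $A,B\in\ggo_W=Z(W)$ will follow at once from the cyclicity of the trace: since $[B,W]=0$, one has $\tr(ABW)=\tr(AWB)=\tr(BAW)$, hence $\tr([A,B]W)=0$. For the remaining two cases, the observation I would exploit is that $\ug_W$ is an ideal of $\qg_W$: indeed, with respect to the eigenspace decomposition $\RR^n=V_1\oplus\cdots\oplus V_l$ of $W$, the subalgebra $\ggo_W$ preserves each $V_i$ while $\ug_W$ consists of strictly block-lower-triangular matrices, so $[\ggo_W,\ug_W]\subseteq\ug_W$, and of course $[\ug_W,\ug_W]\subseteq\ug_W$. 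Thus, whenever at least one of $A,B$ lies in $\ug_W$, the commutator $[A,B]$ lies in $\ug_W$, and it suffices to show that $\tr(XW)=0$ for every $X\in\ug_W$. But $W$ acts as the scalar $w_i\cdot\Id$ on $V_i$ and is therefore block-diagonal, so $XW$ is again strictly block-lower-triangular; its diagonal blocks vanish, and the trace is zero.

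Combining the three cases gives $\tr_W([A,B])=0$ for all $A,B\in\qg_W$, proving the lemma. I do not anticipate any serious obstacle: the statement is a direct algebraic compatibility between the weight $W$ and the Levi-type decomposition $\qg_W=\ggo_W\oplus\ug_W$, and the only point requiring mild care is keeping the block structure straight when multiplying by $W$.
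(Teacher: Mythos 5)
Your proof is correct and follows essentially the same route as the paper: both use the decomposition $\qg_W=\ggo_W\oplus\ug_W$, observe that $\ug_W$ is an ideal so any bracket involving $\ug_W$ lands in $\ug_W$, note that $\tr(XW)=0$ for strictly block-lower-triangular $X$ since $W$ is block-diagonal, and handle the $\ggo_W$ part via the tracelessness of commutators (the paper applies this eigenspace-by-eigenspace where $W$ acts by a scalar; you phrase it equivalently via cyclicity of trace combined with $BW=WB$). The reasoning is sound and complete.
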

\begin{proof}
Recall that $\qg_W = \ggo_W \oplus \ug_W$,  $[\qg_W, \qg_W] = [\ggo_W, \ggo_W] \oplus \ug_W$. Since the elements of $\ug_W$ are strictly lower triangular matrices,  $\tr \ug_W W = 0$. On the other hand, on $\ggo_W$ the result follows by applying, on each of the eigenspaces of $W$, the fact that the commutators are traceless.
\end{proof}

\begin{proposition}\label{prop_deta}
There exists a unique Lie group homomorphism $\det_W : \Q_W \to \RR^+$  whose differential at the identity is $\tr_W : \qg_W \to \RR$.
\end{proposition}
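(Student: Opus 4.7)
The plan is to leverage the semidirect-product structure $\Q_W = \G_W \ltimes \U_W$ (with $\U_W$ normal) together with the eigenspace decomposition of $W$. Writing $V_1, \dots, V_l$ for the distinct eigenspaces with eigenvalues $w_1 < \cdots < w_l$, the centraliser is block-diagonal, $\G_W = \prod_{i=1}^l \Gl(V_i)$, while $\U_W = \exp(\ug_W)$ is connected and meets $\G_W$ trivially. Hence every $q \in \Q_W$ admits a unique factorisation $q = gu$ with $g = \diag(g_1, \dots, g_l) \in \G_W$ and $u \in \U_W$, and the natural candidate for the sought homomorphism is
\[
    \det_W(q) := \prod_{i=1}^l |\det g_i|^{w_i} \in \RR^+.
\]
Note that the absolute values and the choice of target $\RR^+$ are both forced by the fact that the weights $w_i$ are arbitrary reals.

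To verify multiplicativity, given $q_j = g_j u_j$ for $j=1,2$, the normality of $\U_W$ yields $q_1 q_2 = (g_1 g_2)\,(g_2^{-1} u_1 g_2\, u_2)$, whose unique factorisation has $\G_W$-part equal to $g_1 g_2$; the claim then reduces to the multiplicativity of each $|\det|^{w_i}$ on $\Gl(V_i)$. For the differential at the identity I would check the two summands of $\qg_W = \ggo_W \oplus \ug_W$ separately: on $\ggo_W$ the curve $\exp(tA)$ preserves the eigenspace decomposition and stays inside $\G_W$, so
\[
    \ddt\bigg|_{t=0} \log \det_W(\exp tA) = \sum_{i=1}^l w_i\,\tr(A|_{V_i}) = \tr(AW) = \tr_W(A),
\]
while on $\ug_W$ the curve stays in $\U_W$ and both $\log \det_W(\exp tA)$ and $\tr_W(A)$ vanish identically. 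Linearity of the differential and of $\tr_W$ extends the equality to all of $\qg_W$.

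For uniqueness, suppose $\phi : \Q_W \to \RR^+$ is another homomorphism with differential $\tr_W$ at the identity, and set $\psi := \phi \cdot \det_W^{-1}$. Its differential at $e$ vanishes, so $\psi \equiv 1$ on the identity component $\Q_W^0$. Since $\U_W$ is connected, $\pi_0(\Q_W) \cong \pi_0(\G_W) \cong (\ZZ/2)^l$, a torsion group, whereas $(\RR^+, \cdot)$ is torsion-free; hence every homomorphism $\pi_0(\Q_W) \to \RR^+$ is trivial, and so $\psi \equiv 1$ globally. The only real obstacle in this plan is conceptual rather than technical: because the weights $w_i$ need not be rational, one cannot produce $\det_W$ by simply taking powers of usual determinants inside $\Gl_n(\RR)$, and it is precisely the combination of the semidirect-product factorisation with the torsion-free target $\RR^+$ that makes existence and uniqueness work cleanly.
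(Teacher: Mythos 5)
Your construction of $\det_W$ is the same as the paper's: factor $q=gu$ via $\Q_W=\G_W\ltimes\U_W$ and set $\det_W(q)=\prod_i|\det g_i|^{w_i}$, with multiplicativity coming from the fact that $\det_W$ factors through $\Q_W/\U_W\simeq\prod_i\Gl(V_i)$, and the differential computed blockwise. The paper compresses the multiplicativity check into the observation that the map is just $|\det|^{w_i}$ composed with the quotient isomorphism, and it declares the differential computation ``straightforward''; your version spells both out, which is fine but not a different route. Where you genuinely go beyond the paper is uniqueness: the paper's proof does not address it, and it is not automatic since $\Q_W$ is disconnected. Your argument --- that the ratio of two candidates is trivial on $\Q_W^0$ and then factors through $\pi_0(\Q_W)\cong\pi_0(\G_W)\cong(\ZZ/2)^l$, a torsion group mapping to the torsion-free $\RR^+$ --- is correct and fills that gap cleanly. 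So: same existence argument, plus a uniqueness proof the paper left implicit.
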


\begin{proof}
Let $V_1 \oplus \cdots \oplus V_l$ be the eigenspace decomposition of $W$, corresponding respectively to the different eigenvalues $w_1 < \cdots < w_l$. For $q = hu \in \Q_W$, $h\in \G_W$, $u\in \U_W$, we define 
\[
    {\det}_W (q) := \prod_{i=1}^l | {\det h_i} |^{w_i},
\]
where $h_i := h|_{V_i}$. A straightforward computation shows that its differential at the identity is indeed $\tr_W$. To see that it is a  group morphism, notice that the decomposition $\Q_W = \G_W \U_W$ induces a Lie group isomorphism 
\[
    \Q_W/ \U_W \simeq \G_W \simeq \Gl(V_1) \times \cdots \times \Gl(V_l).
\] 
Then $\det_W$ is nothing but the composition of these isomorphisms with taking the product of the absolute value of the usual determinant function (raised to the corresponding power) on each factor $\Gl(V_i)$.
\end{proof}

\begin{definition}
We call $\det_W$ the \emph{weighted determinant} function with weights $W \in \glg_n(\RR)$.
\end{definition}

Notice that for any $k\in \K_W \leq \G_W$ one has $\det_W(k) = 1$.

\begin{remark}
If we set $W = \Id$ then we recover the usual determinant function. On the other hand, for a generic $W$ with $n$ different eigenvalues $w_1 < \cdots < w_n$, we have
\[
    {\det}_W : \Q_W =  \{ q\in \Gl_n(\RR)  : q_{ij} = 0 \, \hbox{if } j>i \} \to \RR^+, \qquad {\det}_W (q) = \prod_{i=1}^n |q_{ii}|^{w_i}.
  \] 
\end{remark}

\subsection{The weighted volume density}\label{sec_beta_vol_dens}

We now extend the weighted determinant function to bilinear forms.  Let $\ngo$ be a real vector space and consider the set  $\Sym^2_{\geq 0}(\ngo^*)$ of  positive semi-definite scalar products on $\ngo$.
 % {\alert may also fix simply a basis, and do everything on $\glg_n(\RR)$} 
Let  us fix a background, positive-definite scalar product $\bkg$ on $\ngo$, a $\bkg$-orthonormal basis $\{ \bar e_i\}_{i=1}^d$, and a set of weights $W = (w_1, \ldots, w_d)$ with $w_1\leq \cdots \leq w_d$. Any positive definite $h \in \Sym^2_{+}(\ngo^*)$  can be written as
\begin{equation}\label{eqn_qcdoth}
    h = q \cdot \bkg := \bkg(q^{-1} \cdot, q^{-1} \cdot), \qquad q \in \Q_W.
\end{equation}
It is not hard to see that the association $h\mapsto q$ can be chosen so that it defines  a smooth map 
\begin{equation}\label{eqn_sym2intoQ}
        \Sym^2_+(\ngo^*) \to \Q_W.
\end{equation}
Indeed,  with respect to the fixed basis $\{\bar e_i\}$, we may take $q$ to be lower triangular and with positive diagonal entries (see \cite[$\S$5]{alek_sol}).

\begin{definition}\label{def_vbp}
The  weighted volume density of $h \in \Sym^2_{+}(\ngo^*)$ is defined as 
\[
     v_W (h) :=  \begin{cases}
            {\det}_{-W}(q),  \qquad &\hbox{if } h = q \cdot \bkg \hbox{ is positive definite;}\\
            0, \qquad &\hbox{otherwise}.
            \end{cases}
\]
\end{definition}

The minus sign in $-W$ and the absence of a square root can be explained by the fact that $q$ scales inverse quadratically with respect to $h$. We notice that $v_W$ is well-defined: indeed, if $h = q_1 \cdot \bkg = q_2 \cdot \bkg$ then $q_2^{-1} q_1 \in \Or(\ngo,\bkg) \cap \Q_W = \K_W$, thus $\det_{-W}(q_1) = \det_{-W}(q_2)$.

% We let $\Aut_1(\ngo) := \Aut(\ngo) \cap \Sl(\ngo)$ be the group of automorphism with unit determinant.

\begin{proposition}\label{prop_vbp}
The weighted volume density is well-defined and smooth on positive-definite inner products. Moreover, if $W>0$  then $v_W$ can be continuously extended to $\Sym^2_{\geq 0}(\ngo^*)$. 
\end{proposition}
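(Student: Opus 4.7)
The first assertion is essentially built into the construction: the map $h\mapsto q$ from \eqref{eqn_sym2intoQ} is smooth, $\det_{-W}:\Q_W\to\RR^+$ is a Lie-group homomorphism (hence real-analytic by \Cref{prop_deta}), and the ambiguity of $q$ in the decomposition $h=q\cdot\bkg$ lies in the compact subgroup $\K_W$ on which $\det_{-W}\equiv 1$. This gives both well-definedness and smoothness of $v_W$ on $\Sym^2_+(\ngo^*)$.

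For the continuous extension, my plan is to first rewrite $v_W$ intrinsically as a product of ratios of principal minors of $h$, and then to use Fischer's determinantal inequality to control degenerations. Let $F_i:=V_1\oplus\cdots\oplus V_i$ be the flag induced by the eigenspace decomposition of $W$, and set $\alpha_i(h):=\det\bigl(h|_{F_i}\bigr)$, with the convention $\alpha_0\equiv 1$. Since every $q\in\Q_W$ preserves each $F_i$, restricting the matrix identity $h=(q^{-1})^T(q^{-1})$ (written in a $\bkg$-orthonormal basis adapted to the flag) to $F_i$ yields $\alpha_i(h)=\prod_{j\le i}\bigl|\det(q|_{V_j})\bigr|^{-2}$, so that
\[
    v_W(h) \,=\, {\det}_{-W}(q) \,=\, \prod_{i=1}^l \left(\frac{\alpha_i(h)}{\alpha_{i-1}(h)}\right)^{w_i/2}.
\]
Each $\alpha_i$ is a polynomial in the entries of $h$, making this formula manifestly continuous on $\Sym^2_+(\ngo^*)$.

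Now take any sequence $h_n\in\Sym^2_+(\ngo^*)$ with $h_n\to h_0\in\Sym^2_{\ge 0}(\ngo^*)\setminus\Sym^2_+(\ngo^*)$. Fischer's inequality applied to the block decomposition of $h|_{F_i}$ along $F_{i-1}\oplus V_i$ gives $\alpha_i\le\alpha_{i-1}\cdot\det(h|_{V_i})$, so each ratio $\alpha_i/\alpha_{i-1}$ is dominated by the polynomial $\det(h|_{V_i})$, and hence uniformly bounded on bounded subsets of positive-definite inner products. Let $i_0\in\{1,\ldots,l\}$ be the smallest index with $\alpha_{i_0}(h_0)=0$; this exists since $\alpha_l(h_0)=\det h_0=0$, and minimality yields $\alpha_{i_0-1}(h_0)>0$. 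Then $(\alpha_{i_0}/\alpha_{i_0-1})(h_n)\to 0$, and the hypothesis $w_{i_0}>0$ forces the $i_0$-th factor in the product to tend to $0$ while the remaining factors stay uniformly bounded. Hence $v_W(h_n)\to 0$, establishing the continuous extension.

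The main obstacle I anticipate is the careful identification of $|\det(q|_{V_i})|^{-2}$ with the minor ratios $\alpha_i/\alpha_{i-1}$: this step hinges on the block-triangular parametrization of $\Q_W$ adapted to the flag of $W$, and is what converts the Lie-theoretic definition of $\det_{-W}$ into a statement about polynomial invariants of $h$. Once this formula is in place, the positivity of $W$ is precisely the hypothesis that turns Fischer's inequality into the required vanishing (and, as a toy example with $w_1=0,\,w_2>0$ and $h_n=\diag(1/n,1)$ shows, it cannot be relaxed).
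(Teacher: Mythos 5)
Your strategy --- expressing $v_W$ through ratios of nested principal minors of $h$ and controlling degenerations via Fischer's inequality --- is a genuinely different route from the paper's, which instead writes $q_k=\exp(E^{(k)})$ with lower-triangular $E^{(k)}\in\qg_W$ and tracks the diagonal entries $E^{(k)}_{ii}$ directly; your version has the merit of producing a closed polynomial/rational formula for $v_W$. However, the pivotal identity $\alpha_i(h)=\prod_{j\le i}|\det(q|_{V_j})|^{-2}$ fails as stated, because $\Q_W$ does \emph{not} preserve the leading flag $F_i=V_1\oplus\cdots\oplus V_i$: by construction $\ug_W$, and hence $\Q_W$, consists of \emph{block-lower-triangular} matrices in a basis ordered by increasing eigenvalue, and these preserve the trailing subspaces $\tilde F_i:=V_i\oplus\cdots\oplus V_l$, not the $F_i$. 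A $2\times 2$ check makes this concrete: with $q=\left(\begin{smallmatrix}a&0\\ b&c\end{smallmatrix}\right)\in\Q_W$ and $h=q\cdot\bkg=(qq^T)^{-1}$ in a $\bkg$-orthonormal basis, one computes $\alpha_1(h)=h_{11}=a^{-2}\bigl(1+b^2c^{-2}\bigr)$, which differs from $|\det(q|_{V_1})|^{-2}=a^{-2}$ whenever $b\neq 0$, so your formula does not reproduce $v_W(h)={\det}_{-W}(q)$.

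The argument is repairable by switching consistently to trailing minors: set $\tilde\alpha_i:=\det(h|_{\tilde F_i})$ and $\tilde\alpha_{l+1}:=1$. Since $q^{-1}$ is block-lower-triangular, restricting $h=(q^{-1})^Tq^{-1}$ to $\tilde F_i$ gives $\tilde\alpha_i(h)=\prod_{j\ge i}|\det(q|_{V_j})|^{-2}$, hence
\[
    v_W(h)=\prod_{i=1}^l\left(\frac{\tilde\alpha_i(h)}{\tilde\alpha_{i+1}(h)}\right)^{w_i/2},
\]
and Fischer's inequality for the block decomposition $\tilde F_i=V_i\oplus\tilde F_{i+1}$ yields $\tilde\alpha_i/\tilde\alpha_{i+1}\le\det(h|_{V_i})$, exactly as you intend. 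Taking $i_0$ to be the \emph{largest} index with $\tilde\alpha_{i_0}(h_0)=0$ --- which exists since $\tilde\alpha_1(h_0)=\det h_0=0$ --- maximality gives $\tilde\alpha_{i_0+1}(h_0)>0$, and $w_{i_0}>0$ forces that factor to vanish while the remaining ones stay bounded. With this relabelling your proof goes through and, as noted, provides a somewhat more explicit extension than the paper's exponential-coordinate argument.
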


\begin{proof}
If $ h = q \cdot \bkg =  q' \cdot \bkg$ then $q' = q k$, $k\in  \Q_W \cap \Or(\ngo, \bkg) = \K_W$. Since $\det_{-W}$ is multiplicative, we have $\det_{-W}(q') = \det_{-W}(q) \det_{-W}(k)$, with $\det_{-W}(k) = 1$ because the restriction of $k$ to each eigenspace of $W$ is orthogonal. Hence, $v_W$ is well defined.

Finally, thanks to smoothness of \eqref{eqn_sym2intoQ} and of $\det_{-W}$, $v_W$ is smooth on the open and dense subset of positive definite metrics. Thus, to extend continuously to all of $\Sym^2_{\geq 0}(\ngo^*)$  we must show that $v_W(h_k) \to 0$ for any sequence $(h_k)_{k\in \NN}$ in $\Sym^2_+(\ngo^*)$ with $h_k \to h_\infty  \in \Sym^2_{\geq 0}(\ngo^*) \setminus \Sym^2_+(\ngo^*)$, provided $W>0$. 

To that end, write $h_k = q_k \cdot \bkg$, with $q_k = \exp(E^{(k)})$, $E^{(k)} \in \qg_W$, lower-triangular with respect to $\{\bar e_i\}$, and with diagonal entries $E^{(k)}_{ii} \in \RR$, $i=1, \ldots, \dim \ngo$. Since
\[
    h_k(\bar e_i, \bar e_i) = \Vert \exp(-E^{(k)}) \bar e_i \Vert_{\bkg}^2  = e^{-2 E^{(k)}_{ii}} + \hbox{ non-negative terms}
\] 
converges to $h_\infty(\bar e_i,\bar e_i)$ as $k\to\infty$, the sequences of eigenvalues $\{ E^{(k)}_{ii}\}_{k\in \NN}$ are uniformly bounded below. On the other hand, if along a subsequence all the eigenvalues are bounded above, then the limit metric $h_\infty$ would be positive definite. Hence 
\[
    \max\left\{E^{(k)}_{ii} : 1\leq i \leq \dim \ngo \right\} \overset{k\to \infty}\longrightarrow +\infty.
\]
Since $w_i > 0$ for all $i$, the above discussion yields that $\tr_W E_k \to \infty$ as $k\to\infty$. Therefore, 
\[
        v_W(h_k) = {\det}_{-W} (q_k) = \exp(-\tr E^{(k)} W)  \to 0.
\]
\end{proof}

\begin{example}
When $\ngo = \Lie(\N)$ we have that $v_{\Id} = v_\N$. Indeed, if $\{U_i^*\}$ is $\bkg$-orthonormal and $h = q\cdot \bkg$ denotes the inner product induced by $g$ on $\ngo$ via evaluation of Killing fields at $p$, then 
\[
    g_{ij}(p) = \bkg(q^{-1} U_i,  q^{-1} U_j) = (q q^T)^{-1}_{ij},
\]
thus  $(\det g_{ij}(p))^{1/2} = (\det q)^{-1}$.
\end{example}

\subsection{The $\beta^+$-weighted volume density}\label{sec_weig_vol_dens}

Let $\N$ be a connected nilpotent Lie group with Lie algebra $\ngo$. After fixing a background left-invariant metric on $\N$, defined by an inner product $\bkg \in \Sym^2_+(\ngo^*)$, consider the associated $\bkg$-self-adjoint \emph{stratum label} $\beta \in \glg(\ngo)$, chosen so that the Lie bracket $\lb_\ngo$ of $\ngo$ is gauged correctly, see \cite[Appendix D]{alek_sol}.  We also set 
\[
    \beta^+ :=  (\tr \beta^2)^{-1} \, \beta + \Id_\ngo,
\] 
which is known to be positive definite since $\ngo$ is nilpotent \cite{standard}. Applying \Cref{def_vbp} to the set of weights $W$ given by the eigenvalues of $\beta^+$ in non-decreasing order we obtain the \emph{$\beta^+$-weighted volume density}
\[
    \vbp : \Sym^2_{\geq 0}(\ngo^*) \to \RR.
\]

\begin{example}\label{ex_heis}
Let $\ngo$ be the $3$-dimensional Heisenberg Lie algebra, spanned by a basis $\{e_1,e_2,e_3\}$ where the only non-zero bracket is $[e_1,e_2] = e_3$. Let $\bkg$ make that basis orthonormal. The stratum label is $\beta = \diag(-1,-1,1)$ \cite{Lau2003} gives the correct gauge for the Lie bracket (this follows from the fact that $\beta^+ \in \Der(\ngo)$), and we have 
\[
  \beta^+ = \diag \left(\tfrac23, \tfrac23, \tfrac43 \right).
\]
For any other inner product $h$ on $\ngo$ we may perform a Gram-Schmidt orthogonalization process on the basis $\{e_i\}$, starting from $e_3$ and going backwards. In this way it is not hard to see that the entries of the matrix $q$ relating $h$ with $\bkg$ via $h = \bkg(q^{-1} \cdot, q^{-1} \cdot)$ satisfies
\begin{align*}
  q_{33} &= h(e_3,e_3)^{-1/2}, \\
  q_{22} &= h(u_2,u_2)^{-1/2}, \qquad u_2 = e_2 - \frac{h_{23}}{h_{33}} e_3, \\
  q_{11} &= h(u_1,u_1)^{-1/2}, \qquad u_1 = e_1 - \frac{h_{12}}{h_{22}} e_2 - \frac{h_{13}}{h_{33}} e_3.
\end{align*}
Here $h_{ij} := h(e_i,e_j)$. Finally, the $\beta^+$-weighted volume density is given by
\[
  v_{\beta^+}(h) = \left(h(u_1,u_1) \, h(u_2,u_2) \, h(e_3, e_3)^{2}\right)^{1/3}.
\]
In the particular case of a \emph{diagonal} metric $h$, i.e. one for which $\{e_i\}$ is orthogonal, we have
\[
  v_{\beta^+}(h) = \left( h_{11} \, h_{22} \, h_{33}^2 \right)^{1/3}.
\]
\end{example}

\begin{lemma}\label{lem_vbpautinv}
If for some positive-definite inner products $h_1, h_2$ there exists $\varphi \in \Aut(\ngo)$ defining a linear isometry $\varphi : (\ngo, h_1) \to (\ngo, h_2)$, then 
\[
        \vbp(h_2) = \det(\varphi)^{-1} \, \vbp(h_1).
\]
In particular, $\vbp$ is $(\Aut(\ngo)\cap \Sl(\ngo))$-invariant.
\end{lemma}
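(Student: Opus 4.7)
My plan is to verify the scaling identity $\vbp(\varphi \cdot h) = \det(\varphi)^{-1} \vbp(h)$, where $\varphi \cdot h := h(\varphi^{-1} \cdot, \varphi^{-1} \cdot)$, for all $\varphi \in \Aut(\ngo)$ and all positive-definite $h$. The isometry hypothesis of the lemma translates precisely to $h_2 = \varphi \cdot h_1$, so this implies the main assertion; the $(\Aut(\ngo) \cap \Sl(\ngo))$-invariance is then the special case $\det \varphi = 1$.

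The approach rests on two standard inputs from the GIT / moment-map characterization of the stratum label $\beta$ (see Appendix~D of \cite{alek_sol}):
\begin{enumerate}[(a)]
    \item $\Aut(\ngo)_0 \subseteq \Q_W$, and hence $\Der(\ngo) \subseteq \qg_W$;
    \item $\tr(\beta D) = 0$ for every $D \in \Der(\ngo)$.
\end{enumerate}
Input (a) says that the stabilizer of the Lie bracket under the $\Gl(\ngo)$-action is contained in the parabolic subgroup associated with the optimal destabilising direction $\beta$; (b) is the corresponding infinitesimal orthogonality of $\beta$ with this stabilizer.

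Given (a) and (b), the main computation becomes short. For $D \in \Der(\ngo) \subseteq \qg_W$ and $h = q \cdot \bkg$ with $q \in \Q_W$, the path $h_t = \exp(tD) \cdot h$ is represented by $q_t := \exp(tD)\, q \in \Q_W$ directly, with no Iwasawa correction needed. Hence
\begin{align*}
    \tfrac{d}{dt}\Big|_{t=0} \log \vbp(h_t)
    &= \tfrac{d}{dt}\Big|_{t=0} \log {\det}_{-W}(q_t) = \tr_{-W}(D) = -\tr(W D) \\
    &= -\tr(D) - (\tr \beta^2)^{-1} \tr(\beta D) = -\tr(D),
\end{align*}
using $W = \beta^+ = \Id + (\tr \beta^2)^{-1} \beta$ together with (b). Integrating from $t = 0$ gives $\vbp(\exp(D) \cdot h) = \det(\exp(D))^{-1} \vbp(h)$ for every $D \in \Der(\ngo)$, and by connectedness of $\Aut(\ngo)_0$ together with the multiplicativity in $\varphi$ of both sides, the identity extends to every $\varphi \in \Aut(\ngo)_0$. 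To reach the full group $\Aut(\ngo)$, I would appeal to the multiplicativity of the putative factor $\varphi \mapsto \vbp(\varphi \cdot h)/\vbp(h)$ combined with a direct check on coset representatives for the finite component group.

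\textbf{Main obstacle.} Essentially all the non-trivial content is packaged into the GIT inputs (a) and (b). Without (a) in particular, one would face an ad hoc Iwasawa-type decomposition of a derivation into a $\qg_W$-part and a $\bkg$-skew part, together with $q$-dependent correction terms of the form $\tr(\beta \cdot q \dot k_0 q^{-1})$ whose vanishing requires a separate orthogonality argument; the GIT framework sidesteps this entirely by ensuring that every derivation already lies in $\qg_W$, making the one-parameter path $q_t$ stay inside $\Q_W$.
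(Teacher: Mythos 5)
Your strategy is correct and rests on the same two GIT facts the paper invokes — that $\Aut(\ngo)$ sits inside the parabolic $\Q_W$ and that $\beta$ is orthogonal to the derivation algebra — but you carry out the computation one level down, on $\Der(\ngo)$, and then integrate. The paper's proof is the group-level version of the same idea: since $\varphi \in \Q_W$ and $q_1 \in \Q_W$, one can write $h_2 = (\varphi q_1)\cdot\bkg$ directly, so $\vbp(h_2) = \det_{-\beta^+}(\varphi q_1) = \det_{-\beta^+}(\varphi)\vbp(h_1)$ by multiplicativity of the weighted determinant; then $\det_{-\beta^+}(\varphi)$ factors via $-\beta^+ = -\Id - (\tr\beta^2)^{-1}\beta$ into $\det_{-\Id}(\varphi)\cdot\det_\beta(\varphi)^{-1/\tr\beta^2} = \det(\varphi)^{-1}$, the last equality using $\Aut(\ngo)\subset\det_\beta^{-1}(1)$ from the GIT literature. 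Your differential $-\tr(\beta^+ D) = -\tr D - (\tr\beta^2)^{-1}\tr(\beta D) = -\tr D$ is precisely the infinitesimal version of that factorization, so the core content is identical.

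Where your version is genuinely weaker is the passage from $\Aut(\ngo)_0$ to the full automorphism group. You defer to ``a direct check on coset representatives'', but no such check is exhibited, and it is not obviously cheap: to even evaluate $\det_{-\beta^+}(\varphi)$ on a coset representative $\varphi$, you first need $\varphi \in \Q_W$, which is exactly the piece of your input (a) you restricted to the identity component. The reference the paper cites ([BL17, \S 7], cf.\ also [HSS]) establishes the containment $\Aut(\ngo)\subset\Q_\beta$ and the identity $\Aut(\ngo)\subset\det_\beta^{-1}(1)$ for the \emph{full} automorphism group, not just its identity component; citing that form of the statement eliminates the component-group issue entirely and makes the integration step unnecessary. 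So the recommendation is: upgrade your input (a) to the full-group statement, at which point the group-level argument of the paper is both shorter and complete, and your infinitesimal calculation becomes a consistency check rather than the engine of the proof.
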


\begin{proof}
If $h_1 = q_1 \cdot \bkg$ then by assumption we have $h_2 = (\varphi q_1) \cdot \bkg$. Thus, by definition of $\vbp$ we have
\[
        \vbp(h_2) = {\det}_{-\beta^+} (\varphi q_1) = {\det}_{-\beta^+} (\varphi) \vbp(h_1).
\]
Also, 
\[
   {\det}_{-\beta^+} (\varphi) =  \left({\det}_{\beta} (\varphi) \right)^{-1/\tr \beta^2} \, {\det}_{-\Id}(\varphi) = \det(\varphi)^{-1},
\]
since $\Aut(\ngo) \subset \det_\beta^{-1}(1)$ \cite[$\S$7]{BL17}.
\end{proof}

\subsection{The weighted volume density of the orbits}\label{sec_hvbp}

In this section we finally define the weighted volume density that will play a key role in the proof of Theorem \ref{thm_unimod}.

Consider a nilpotent Lie group $\N$ acting properly and isometrically on a complete Riemannian manifold $(M,g)$, which we assume is also endowed with a semi-definite metric $g_s$.  Evaluating Killing fields, $g_s$ gives rise to a  smooth map 
\[
    h : M \to \Sym^2_{\geq 0}(\ngo^*), \qquad p\mapsto h_p, \qquad h_p(U,V) :=  g_s(U^*_p, V^*_p), \qquad U,V\in \ngo,
\]
associating  to each $p\in M$ a semi-definite inner product in $\ngo$. 
We define the $\beta^+$-weighted volume density of the $\N$-orbits with respect to the semi-definite metric $g_s$,  as the smooth function 
\begin{equation}\label{eqn_vbpdef}
  \vbp : M \to \RR, \qquad  \vbp(p) := \vbp(h_p),
\end{equation}
where the right-hand-side was defined in $\S$\ref{sec_weig_vol_dens}.

\begin{lemma}\label{lem_vbp_finv}
Let $f\in \Isom(M,g)$ normalise $\N$ (as per the discussion preceding Lemma \ref{lem_vN-finv}). Conjugation by $f$ in $\N$ induces a Lie algebra automorphism denoted by $\Ad(f)|_\ngo \in \Aut(\ngo)$. Then,
\[
    \vbp(f(p)) = \det \left( \Ad(f)|_\ngo \right)^{-1} \, \vbp(p).
\]
In particular, $\vbp$ is $\N$-invariant.
\end{lemma}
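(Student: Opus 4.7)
The plan is to reduce the claim to \Cref{lem_vbpautinv} by identifying $\Ad(f)|_\ngo$ as a linear isometry between the inner products $h_p$ and $h_{f(p)}$ that the metric induces on $\ngo$ via evaluation of Killing fields.

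First, I would record the standard identity
\[
    (df)_p(U^*_p) = \left(\Ad(f)U\right)^*_{f(p)}, \qquad U\in \ngo, \ p\in M,
\]
valid whenever $f$ normalises the image of $\N$ in $\Isom(M,g)$. It is obtained by differentiating $f(\exp(tU)\cdot p) = \exp(t\Ad(f)U)\cdot f(p)$ at $t=0$, which in turn reflects that conjugation by $f$ sends $\exp(tU) \in \N$ to $\exp(t\Ad(f)U) \in \N$. This is the analogue, for the normaliser, of the formula already used in the proof of \Cref{lem_vN_Ninv}.

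Next, using that $f$ is an isometry, for all $U,V \in \ngo$ a short computation gives
\[
    h_{f(p)}\bigl(\Ad(f)U, \Ad(f)V\bigr) = g\bigl((df)_p(U^*_p), (df)_p(V^*_p)\bigr) = g(U^*_p, V^*_p) = h_p(U,V),
\]
so that $\Ad(f)|_\ngo \in \Aut(\ngo)$ realises a linear isometry $(\ngo, h_p) \to (\ngo, h_{f(p)})$. On the $\N$-regular locus, where both inner products are positive definite, \Cref{lem_vbpautinv} applies directly and, combined with \eqref{eqn_vbpdef}, yields
\[
    \vbp(f(p)) = \vbp(h_{f(p)}) = \det(\Ad(f)|_\ngo)^{-1}\, \vbp(h_p) = \det(\Ad(f)|_\ngo)^{-1}\, \vbp(p).
\]
At singular points both sides vanish by \Cref{def_vbp} (since $h_p$ fails to be positive definite there), and because $f$ permutes orbits and preserves orbit types the equality extends trivially.

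For the $\N$-invariance conclusion, I would set $f = L_x$ with $x\in \N$, so that $\Ad(f)|_\ngo$ is the usual $\Ad(x)$. Nilpotency of $\ngo$ makes every $\ad X$ nilpotent, hence $\Ad(\exp X) = \exp(\ad X)$ is unipotent and has determinant one; since $\N$ is connected and generated by exponentials, $\det \Ad(x) = 1$ for all $x\in \N$, giving $\vbp(x\cdot p) = \vbp(p)$. The only mildly subtle point is the extension across singular orbits, but it is essentially automatic once one observes that $\vbp$ vanishes there and that isometries respect the orbit-type stratification; I therefore do not expect any genuine obstacle in this argument.
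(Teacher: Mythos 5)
Your proof is correct and follows essentially the same route as the paper: identify $\Ad(f)|_\ngo$ as a linear isometry-automorphism between $(\ngo,h_p)$ and $(\ngo,h_{f(p)})$ and invoke \Cref{lem_vbpautinv}. One small notational slip: the map $h$ is defined in $\S$\ref{sec_hvbp} using the semi-definite metric $g_s$, not $g$, so the middle step of your isometry computation should read $g_s\bigl((df)_p U^*_p,(df)_p V^*_p\bigr) = g_s(U^*_p,V^*_p)$; this requires (as the paper implicitly does) that $f$ preserves $g_s$ as well as $g$, which holds in the intended application to the frame bundle. You are also a bit more explicit than the paper in two harmless places: you spell out why $\det\Ad(x)=1$ for $x\in\N$ via nilpotency (the paper simply cites unimodularity, which for connected $\N$ gives the same), and you address the degenerate case on singular orbits directly, where the paper lets both sides vanish silently.
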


\begin{proof}
The inner products $h_{f(p)}, h_p$ induced by $ g_s$ on $\ngo$ at $f(p)$ and $p$, respectively, are such that
\[
    \Ad(f)|_\ngo : \left(\ngo, h_p\right) \to \big(\ngo, h_{f(p)}\big)
\] 
is a linear isometry and an automorphism.  The formula thus follows from \Cref{lem_vbpautinv}. The last claim  follows by applying the formula to $f\in \N$, and the fact that $\N$ is unimodular. 
\end{proof}

The above lemma is absolutely key for geometric applications, as it will imply the $\G$-invariance of the function $\vbp: M \to \RR$ provided $\N\lhd \G$ is the nilradical of a unimodular Lie group $\G$.

For the reader's convenience, we connect the above definition \eqref{eqn_vbpdef} with the one used in \cite{alek_sol}, and recall a useful estimate. To that end, let $M_{\sf reg}$ denote the regular part for the $\N$-action, and set $P := \N\backslash M_{\sf reg}$. By \Cref{lem_vbp_finv} we may write $\vbp = \vbpu \circ \pi^\N$, for some smooth function $\vbpu \in \cca^\infty(P)$.

\begin{lemma}\label{lem_vbp=vbvn}
Let $\vb$ be defined as in \cite[$\S$5]{alek_sol}. Then, $\vbp = \vb  v_\N$, and on $P$ we have 
\[
   \Delta_P  \, \log \vbpu \geq \sum_{j=1}^d \tr \left( \left( \nabla_{X_j} L\right)_{X_j} \cdot \beta^+ \right),
\]
where $\{ X_j\}$ is any local orthonormal frame for $\ho^\N$.
\end{lemma}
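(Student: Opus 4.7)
My plan is in two steps: first verify the identity $v_{\beta^+} = v_\beta\, v_\N$ algebraically, then reduce the Laplace inequality on $P$ to an estimate quoted from \cite{alek_sol} together with a direct computation for $v_\N$.

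For the identity, I would start from $v_{\beta^+}(p) = {\det}_{-\beta^+}(q)$ with $h_p = q \cdot \bkg$ as in \eqref{eqn_qcdoth}, and expand $\beta^+ = \Id + (\tr \beta^2)^{-1}\beta$. Because the Lie-algebra homomorphism $\tr_W : \qg_W \to \RR$ is linear in the weight $W$, the group homomorphism of \Cref{prop_deta} satisfies ${\det}_{W_1+W_2}(q) = {\det}_{W_1}(q)\, {\det}_{W_2}(q)$ whenever $q$ lies in both $\Q_{W_1}$ and $\Q_{W_2}$. Applied to $W_1 = -\Id$ and $W_2 = -(\tr\beta^2)^{-1}\beta$ with $q \in \Q_{\beta^+}$, this yields $v_{\beta^+} = v_\N \cdot v_\beta$, with $v_\beta(p) := {\det}_{-(\tr\beta^2)^{-1}\beta}(q)$ matching the function used in \cite[$\S$5]{alek_sol}.

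For the inequality, I would split $\log v_{\beta^+} = \log v_\beta + \log v_\N$ (both summands are $\N$-invariant by \Cref{lem_vN_Ninv} and \Cref{lem_vbp_finv}, using unimodularity of $\N$), descend to $P$, and treat the two pieces separately. The $v_\N$-piece I intend to compute directly, using the Riemannian-submersion identity
\[
(\Delta_P \bar f) \circ \pi^\N = \Delta_M f + g(\nabla f, \mcv^\N)
\]
for $\N$-invariant $f$, combined with $\nabla \log v_\N = -\mcv^\N$ from \eqref{eqn_mcv_logvn}, the product-rule identity
\[
X(\tr L_X) = \tr\!\bigl((\nabla_X L)_X\bigr) + \tr\!\bigl(L_{\nabla_X X}\bigr),
\]
and the submersion relation $\sum_r (\nabla_{U_r}U_r)^{\ho} = \mcv^\N$ (equivalent to $X(\log v_\N) = \tr L_X$). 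A short calculation then shows that the vertical-Hessian contribution $+|\mcv^\N|^2$ cancels exactly the submersion correction $-|\mcv^\N|^2$, leaving the clean equality
\[
\Delta_P \log v_\N = \sum_j \tr\!\bigl((\nabla_{X_j} L)_{X_j}\bigr).
\]
For the $v_\beta$-piece I would invoke the analogous Laplace estimate from \cite[$\S$5]{alek_sol},
\[
\Delta_P \log v_\beta \geq \sum_j \tr\!\bigl((\nabla_{X_j} L)_{X_j}\cdot (\tr \beta^2)^{-1}\beta\bigr),
\]
whose nonnegative slack is a Bochner-type remainder measuring the failure of the orbit-metric map $h$ to be harmonic along the $\beta$-direction. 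Summing the equality and the inequality and using $(\tr\beta^2)^{-1}\beta + \Id = \beta^+$ produces the claim.

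The main obstacle I anticipate is verifying that the $v_\beta$ built here via the lower-triangular $\Q_{\beta^+}$-parametrisation coincides \emph{as an $\N$-invariant function on $M$} with the $v_\beta$ used in \cite[$\S$5]{alek_sol}, so that the quoted Laplace estimate transfers without rescaling. Once the gauge conventions are aligned, the remaining $v_\N$ computation is a standard, sign-sensitive but short Riemannian-submersion calculation whose two error terms cancel pleasingly.
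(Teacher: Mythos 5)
Your proof follows essentially the same structure as the paper's: split $\log v_{\beta^+} = \log v_\beta + \log v_\N$, compute $\Delta_P \log v_\N$ exactly, quote the $v_\beta$ estimate from \cite[$\S$5]{alek_sol}, and add. The genuine difference is in how you establish the identity $v_{\beta^+} = v_\beta\, v_\N$. Your route --- linearity of $\tr_W$ in $W$, hence multiplicativity of $\det_W$ --- is algebraically natural, but it runs headlong into the obstacle you yourself flag at the end: one must still verify that the $\det_{-(\tr\beta^2)^{-1}\beta}$-built function agrees with the $v_\beta$ of \cite[$\S$5]{alek_sol}, and multiplicativity alone cannot do that. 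There is also a small domain slip: $\Q_{-(\tr\beta^2)^{-1}\beta}$ consists of block \emph{upper} triangular matrices, since the eigenvalues of $-(\tr\beta^2)^{-1}\beta$ reverse the order of those of $\beta^+$, so $q$ chosen lower triangular does not lie in ``both $\Q_{W_1}$ and $\Q_{W_2}$''; the multiplicativity should instead be stated for the three homomorphisms $\det_{W'}$ all defined on the single group $\Q_{\beta^+}$, for $W'$ ranging over weights that preserve the $\beta^+$-eigenspace decomposition. The paper avoids both issues by verifying the identity through first-order data: both $\vbp$ and $\vb\, v_\N$ equal $1$ at the background inner product $\bkg$, and by \cite[Lemma 5.9]{alek_sol} together with \eqref{eqn_mcv_logvn} they share the logarithmic first variation $X(\log\,\cdot\,) = \la L_X, \beta^+\ra$, which pins down the function and sidesteps the gauge-matching question entirely. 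Your direct Riemannian-submersion derivation of $\Delta_P \log v_\N = \sum_j \tr\bigl((\nabla_{X_j} L)_{X_j}\bigr)$ is fine and equivalent to the paper's appeal to \cite[Lemma 2.3]{alek_sol}; the remaining steps then match the paper's line for line.
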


\begin{proof}
As functions on $\Sym^2_+(\ngo^*)$, both $\vbp$ and $\vb v_\N$ evaluate to $1$ on the background inner product $\bkg$, and their logarithmic first variations are both given by
\begin{equation}\label{eqn_gradlogvbp}
    X(\log \vbp) = X(\log  v_\beta + \log  
    v_\N) =  \la L_X, \beta^+ \ra, \qquad X\in \ho^\N,
\end{equation}
thanks to \cite[Lemma 5.9]{alek_sol} and \eqref{eqn_mcv_logvn}. Thus, $\vbp = \vb v_\N$. 

Regarding the estimate, we have $\vbpu = \underline{v}_\beta \underline{v}_\N$, where $\underline{v}_\beta \circ \pi^\N = \vb, \underline{v}_\N \circ \pi^\N = v_\N$. By \cite[Lemma 2.3]{alek_sol} and \eqref{eqn_mcv_logvn} we have 
\[
  \Delta_P \log \underline{v}_\N = - \divg_P (\pi^\N_*\mcv^\N) = \sum_{j=1}^d \tr \left( (\nabla_{X_j} L)_{X_j} \right).
\]
On the other hand, \cite[Lemma 5.9]{alek_sol} gives 
\[
  \Delta_P \, \log \underline{v}_\beta \geq \sum_{j=1}^d \tr \left( (\nabla_{X_j} L )_{X_j} \cdot (\beta^+ - \Id_{\ve^\N} \right),
\]
and the claim follows from the fact that $\Delta_P \log \vbpu = \Delta_P \log \underline{v}_\beta + \Delta_P \log \underline{v}_\N$. (Notice that the lemmas from \cite{alek_sol} are concerned with local curvature estimates, and thus can be applied in this more general situation provided we are in the regular part.)
\end{proof}

\section{Negative Ricci curvature and unimodular symmetry}\label{sec_ricneg}

In this section our aim is to prove \Cref{thm_ricneg}, which is the first step towards \Cref{thm_unimod}, and holds under the more general (partial) negative Ricci assumption.

\begin{theorem}\label{thm_ricneg}
Let $\G$ be a connected unimodular Lie group acting cocompactly, properly and isometrically on a  Riemannian manifold $(M^n, g)$. Assume that the Ricci curvature of $g$ is strictly negative in directions tangent to the $\G$-orbits. Then, $\G$ is semisimple.
\end{theorem}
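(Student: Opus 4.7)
The plan is to show that the nilradical $\N\lhd \G$ is trivial, which is equivalent to $\G$ being semisimple. Assume for contradiction that $\N\neq\{e\}$. Because $\G$ may act on $M$ with non-trivial isotropy and the orbit space $\G\backslash M$ can be singular, I pass to the orthonormal frame bundle $\pi:FM\to M$ (see \Cref{sec_FM}), on which the natural lift of the $\G$-action is free, proper and isometric, and the orbit space $\G\backslash FM/\Or(n)\simeq \G\backslash M$ remains compact. The pull-back $g_s:=\pi^*g$ is only positive semi-definite, which is exactly the framework set up in $\S$\ref{sec_hvbp}.

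Applied to the lifted $\N$-action and the semi-definite metric $g_s$, the construction of $\S$\ref{sec_hvbp} produces the $\beta^+$-weighted volume density $\vbp:FM\to\RR_{\geq 0}$, which is continuous by \Cref{prop_vbp}, smooth and positive exactly over the $\N$-regular locus $M_{\sf reg}^\N$, and zero elsewhere. By \Cref{lem_vbp_finv} applied to $f\in\G$,
\begin{equation*}
  \vbp(f\cdot p)=\det\bigl(\Ad(f)|_\ngo\bigr)^{-1}\,\vbp(p).
\end{equation*}
Since $\ngo\lhd\ggo$ and the induced $\G$-action on the reductive quotient $\ggo/\ngo$ has trivial unimodular character, unimodularity of $\G$ forces $\det(\Ad(\cdot)|_\ngo)\equiv 1$; together with the tautological $\Or(n)$-invariance of the construction, $\vbp$ descends to a continuous non-negative function on the compact space $\G\backslash M$ which is not identically zero (since $M_{\sf reg}^\N\neq\emptyset$).

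The main obstacle---precisely the step at which unimodularity is essential, as emphasised in the Introduction and treated via \Cref{lem_hvbp}---is to show that this descended function attains its maximum at an $\N$-regular point, i.e.~in the smooth interior of the orbit space and not at a singular orbit, over which $\vbp$ may a priori vanish or be badly behaved. Once this is in place, the maximum principle can be applied to $\log\vbpu$ on $P=\N\backslash M_{\sf reg}^\N$. Combining the differential estimate of \Cref{lem_vbp=vbvn} with O'Neill's vertical Ricci formula \eqref{eqn_EV} applied to the $\N$-submersion yields a pointwise lower bound of the schematic form
\begin{equation*}
  \Delta_P\log\vbpu \geq -\tr\bigl(\ric_g|_{\ve^\N}\cdot\beta^+\bigr)+\tr\bigl(\ric^\N\cdot\beta^+\bigr)+(\text{non-negative terms}).
\end{equation*}
Since by hypothesis $\ric_g$ is strictly negative on $\ve^\G\supseteq\ve^\N$ and $\beta^+>0$, the first term is strictly positive; the second is non-negative by the algebraic properties of the stratum label $\beta$ on nilpotent Lie algebras. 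This gives $\Delta_P\log\vbpu>0$ at the interior maximum, contradicting the maximum principle. Hence $\N=\{e\}$, and $\G$ is semisimple.
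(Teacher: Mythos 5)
Your argument follows the paper's proof of \Cref{thm_ricneg} essentially step by step: pass to the frame bundle to make the $\N$-action free, form the $\beta^+$-weighted volume density with respect to the semi-definite metric $\pi^*g$, use unimodularity to deduce $\G$-invariance (so $\vbp$ descends to the compact space $\G\backslash M$), observe that $\vbp$ vanishes on singular orbits so the maximum is attained at a regular point, and then apply a maximum principle to the differential estimate coming from \Cref{lem_vbp=vbvn} and O'Neill's formula. The overall strategy is the same as the paper's.

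There are, however, two points that need tightening. First, the schematic inequality
\[
\Delta_P\log\vbpu \geq -\tr\bigl(\ric_g|_{\ve^\N}\beta^+\bigr)+\tr\bigl(\ric^\N\beta^+\bigr)+(\text{non-negative terms})
\]
is \emph{not} a valid pointwise bound as written. Combining \Cref{lem_vbp=vbvn} with \eqref{eqn_EV} in endomorphism form yields
\[
\Delta_P\log\vbpu \geq -\tr\bigl(\Ric_g|_{\ve^\N}\beta^+\bigr)+\tr\bigl(\Ric^\N\beta^+\bigr)+\tr\bigl(A^*A\,\beta^+\bigr)+\tr\bigl(L_{\mcv^\N}\beta^+\bigr),
\]
and the last term is not a priori non-negative (in the Einstein/nilsoliton case one has $L_{\mcv^\N}=-\beta^+$, so $\tr(L_{\mcv^\N}\beta^+)=-\tr((\beta^+)^2)<0$). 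The paper's \Cref{lem_estimate_Gunimod} avoids this by working with $\Delta_M$ rather than $\Delta_P$: the drift term $-g^P(\nabla\log\vbpu,\pi^\N_*\mcv^\N) = -\tr(L_{\mcv^\N}\beta^+)$ coming from $\Delta_M\log\vbp = \Delta_P\log\vbpu - g^P(\nabla\log\vbpu, \pi^\N_*\mcv^\N)$ exactly cancels this problematic term. If you prefer to stay with $\Delta_P$, you must argue that at the maximum the gradient $\nabla\log\vbpu$ vanishes, hence $\tr(L_{\mcv^\N}\beta^+) = \langle\nabla\log\vbpu,\pi^\N_*\mcv^\N\rangle = 0$ there, which salvages the conclusion --- but this is not the same as the term being non-negative everywhere. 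Second, you slightly misattribute the role of unimodularity: it is what makes $\vbp$ $\G$-invariant via \Cref{lem_hvbp} so that it descends to the compact space $\G\backslash M$; the fact that the maximum lies at a regular point is instead a consequence of \Cref{prop_vbp}, which forces $\vbp$ to vanish (hence $\log\vbp\to-\infty$) on the singular locus. These are two separate ingredients, and your phrasing conflates them.
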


Arguing by contradiction, let $(M^n,g)$ and $\G$ be as in the theorem's statement, and  assume that  $\G$ is not semisimple.  Then its nilradical $\N$ has positive dimension. Consider the lifted action of $\N$ on the frame bundle $FM$, which by the discussion in $\S$\ref{sec_liftiso}, is isometric and free. We endow $FM$ with the positive semi-definite metric $\hat g_s := \pi^* g$ which vanishes precisely on directions which are tangent to $\Or(n)$-orbits.  

We now apply $\S$\ref{sec_hvbp} to the $\N$-action on $FM$, endowed with the semi-definite metric $\hat g_s$. That is, after fixing a basis $\{e_i\}$ for $\ngo$, we consider the $\beta^+$-weigthed volume density $\vbph : FM \to \RR_{\geq 0}$ of the $\N$-orbits in $FM$ with respect to $\hat g_s$, as defined in $\S$\ref{sec_hvbp}. This is a continuous function which vanishes precisely on those frames $u\in FM_{\sf sing}$ for which $\N \cdot (\pi(u)) \subset M$ is a singular orbit.

\begin{lemma}\label{lem_hvbp}
The function $\hvbp : FM_{\sf reg} \to \RR_{>0}$ is smooth, $\G$-invariant and $\Or(n)$-invariant.
\end{lemma}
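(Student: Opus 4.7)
The plan is to derive all three properties from a transformation rule analogous to \Cref{lem_vbp_finv}, applied in the semi-definite setting. First I will verify that the proof of \Cref{lem_vbp_finv} goes through unchanged on $(FM, \hat g_s)$: for any diffeomorphism $f$ of $FM$ that preserves $\hat g_s$ and normalises the lifted $\N$, the induced map $\Ad(f)|_\ngo \in \Aut(\ngo)$ defines a linear isometry $(\ngo, h_u) \to (\ngo, h_{f(u)})$, where $h_u$ is the (possibly degenerate) inner product on $\ngo$ induced from $\hat g_s$ at $u$. Applying \Cref{lem_vbpautinv} on the dense open subset $FM_{\sf reg}$ and extending by continuity to singular frames yields
\[
\hvbp(f(u)) \;=\; \det(\Ad(f)|_\ngo)^{-1}\,\hvbp(u).
\]

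Smoothness on $FM_{\sf reg}$ is then immediate: since $\pi_* U^\sharp = U^*$ for the $\N$-Killing fields, the map $h : FM \to \Sym^2_{\geq 0}(\ngo^*)$ satisfies $h_u = h_{\pi(u)}$, and hence it is smooth and takes values in $\Sym^2_+(\ngo^*)$ precisely on $FM_{\sf reg} = \pi^{-1}(M_{\sf reg})$, where $\vbp$ is smooth by \Cref{prop_vbp}. For $\Or(n)$-invariance, the right $\Or(n)$-action preserves $\hat g_s = \pi^* g$ (as it acts along the fibres of $\pi$) and commutes with every lifted isometry, in particular with the lifted $\N$; hence $\Ad(k)|_\ngo = \Id$ for $k \in \Or(n)$, and the transformation rule gives $\hvbp(u \cdot k) = \hvbp(u)$. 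For $\G$-invariance, the lifted $\G$-action is isometric for $\hat g_s$ and normalises $\N$ because $\N \lhd \G$; what remains is the algebraic identity $\det(\Ad(g)|_\ngo) = 1$ for every $g \in \G$.

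This last identity is where the hypothesis that $\G$ is unimodular enters decisively, and I expect it to be the main step. By connectedness of $\G$ it reduces to showing $\tr(\ad X|_\ngo) = 0$ for every $X \in \ggo$. Fixing a Levi decomposition $\ggo = \sg \oplus \rg$ and using the standard containment $[\ggo, \rg] \subseteq \ngo$, I will argue separately: for $X \in \sg$, $\ad X$ preserves $\sg$, $\rg$ and $\ngo$, and any $\sg$-complement $\fg$ to $\ngo$ in $\rg$ satisfies $\ad X(\fg) \subseteq \ngo \cap \fg = 0$, so the unimodularity identity $\tr \ad X|_\ggo = 0$ collapses to $\tr \ad X|_\rg = \tr \ad X|_\ngo = 0$; for $X \in \rg$, one has $\ad X(\sg) \subseteq \ngo$, whence $\tr \ad X|_\ggo = \tr \ad X|_\rg$, while on the abelian quotient $\rg/\ngo$ the operator $\ad X$ is zero, again forcing $\tr \ad X|_\ngo = 0$. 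Combined with the transformation rule above, this concludes the proof and makes transparent why the whole strategy based on a maximum principle for $\hvbp$ is sensitive to unimodularity, as flagged in the introduction.
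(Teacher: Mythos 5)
Your proof is correct and follows the paper's strategy exactly: smoothness comes from \Cref{prop_vbp} and the fact that $h$ factors through $\pi$; $\Or(n)$-invariance from the commutativity of the lifted $\N$- and $\Or(n)$-actions (so $\Ad(R_r)|_\ngo = \Id$); and $\G$-invariance from \Cref{lem_vbp_finv} together with $\det(\Ad(g)|_\ngo)=1$ for all $g\in\G$. The one (welcome) addition is your self-contained Levi-decomposition argument for $\tr(\ad X|_\ngo)=0$ when $\ggo$ is unimodular, a fact the paper instead cites from \cite[Lemma 2.6]{alek}.
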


\begin{proof}
Smoothness follows from \Cref{prop_vbp} and the definition of $\hvbp$.

Since the nilradical is normal, any isometry $f\in \G$ normalises $\N$. Moreover, by unimodularity of $\G$ it follows that $\det \Ad(f)|_\ngo = 1$ (see \cite[Lemma 2.6]{alek}). Hence, $\hvbp$ is $\G$-invariant by \Cref{lem_vbp_finv}.

Regarding $\Or(n)$-invariance, let $r\in \Or(n)$ and denote by $R_r \in \Isom(FM, \hat g_s)$, $u\mapsto ur$, the corresponding isometry. Since the actions of $\N$ and $\Or(n)$ commute, the corresponding subgroups of $\Isom(FM,\hat g_s)$ commute as well. Hence, in the notation of \Cref{lem_vbp_finv} we have $\Ad(R_r)|_\ngo = \Id_\ngo$ and thus $\hvbp(ur) = \hvbp(u)$. 
\end{proof}

By $\Or(n)$-invariance, $\log \hvbp : FM_{\sf reg} \to \RR$ induces a smooth function $\log \vbp: M_{\sf reg} \to \RR$, which satisfies the following estimate:

\begin{lemma}\label{lem_estimate_Gunimod}
On the regular part $M_{\sf reg}$, $\log \vbp$ satisfies the differential inequality
\[
       \Delta_M \log \vbp + \la \Ric_M|_{\ve^\N}, \beta^+ \ra \geq 0.
\]
\end{lemma}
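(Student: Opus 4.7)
The plan is to transfer Lemma \ref{lem_vbp=vbvn}'s estimate for $\Delta_P\log\vbpu$ up to $M_{\sf reg}$ via the Riemannian submersion $\pi^\N$, and then rewrite the resulting lower bound using O'Neill's formula \eqref{eqn_EV} to expose the Ricci tensor of $M$.  By \Cref{lem_vbp_finv} the function $\log\vbp$ is $\N$-invariant on the principal part, so $\log\vbp = \log\vbpu\circ\pi^\N$; the Laplacian-transfer formula for a Riemannian submersion combined with \eqref{eqn_gradlogvbp} then yields
\[
\Delta_M\log\vbp \;=\; (\Delta_P\log\vbpu)\circ\pi^\N \;-\; \la L_{\mcv^\N},\beta^+\ra.
\]

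Lemma \ref{lem_vbp=vbvn} bounds the first term below by $\sum_j\tr((\nabla_{X_j}L)_{X_j}\cdot\beta^+)$ for any local orthonormal horizontal $\N$-frame $\{X_j\}$. Inserting O'Neill's formula \eqref{eqn_EV} for the $\N$-submersion and tracing against $\beta^+$ --- viewed as a $g$-symmetric positive-definite operator on $\ve^\N$ via the bilinear pushforward of Killing-field evaluation --- gives
\[
\sum_j\tr\bigl((\nabla_{X_j}L)_{X_j}\cdot\beta^+\bigr) \;=\; -\la\Ric_M|_{\ve^\N},\beta^+\ra + \la\Ric^\N,\beta^+\ra + \la L_{\mcv^\N},\beta^+\ra + \sum_r\lambda_r\,\Vert A^\N e_r\Vert^2,
\]
where $\{e_r\}$ is a $g$-orthonormal eigenbasis of $\beta^+|_{\ve^\N}$ with positive eigenvalues $\lambda_r$. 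Combining the two displays and cancelling the $\la L_{\mcv^\N},\beta^+\ra$ contribution produces
\[
\Delta_M\log\vbp + \la\Ric_M|_{\ve^\N},\beta^+\ra \;\geq\; \la\Ric^\N,\beta^+\ra + \sum_r\lambda_r\,\Vert A^\N e_r\Vert^2.
\]

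The right-hand side is manifestly non-negative: the $A^\N$-term is a weighted sum of squares with positive weights, and the orbit-Ricci term satisfies the moment-map inequality $\la\Ric^\N,\beta^+\ra\geq 0$, valid for any left-invariant metric on a nilpotent Lie group whose Lie bracket belongs to the Kirwan--Ness stratum labelled by $\beta$ (a fact closely related to \cite[Lemma 5.9]{alek_sol}). The main conceptual subtlety is the identification of $\beta^+\in\End(\ngo)$ as a $g$-symmetric, positive-definite operator on $\ve^\N$: naively conjugating by the Killing-field isomorphism $\phi_p:(\ngo,h_p)\to(\ve^\N_p,g)$ only yields an $h_p$-symmetric operator, so one must instead push forward the bilinear form $\bkg(\beta^+\cdot,\cdot)$, and this is precisely the choice that makes all terms on the right-hand side above have the correct sign.
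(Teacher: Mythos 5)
Your proposal is correct and follows essentially the same route as the paper's proof: transfer the Laplacian estimate from Lemma \ref{lem_vbp=vbvn} via the Riemannian submersion $\pi^\N$, absorb the $\tr(\beta^+ L_{\mcv^\N})$ contribution, substitute O'Neill's formula \eqref{eqn_EV}, and conclude via $\la A^*A,\beta^+\ra\geq 0$ together with the GIT estimate $\la\Ric^\N,\beta^+\ra\geq 0$. The only cosmetic difference is the bookkeeping order (you cancel the mean-curvature term explicitly, the paper folds it into the same chain of inequalities), so there is nothing substantively new or missing.
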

\begin{proof}
\Cref{lem_vbp=vbvn} gives that $\vbp = v_\beta v_\N$ on $M_{\sf reg}$. Thus, using the Riemannian submersion 
\[
    \pi^\N :    M_{\sf reg}  \to \N\backslash M_{\sf reg} =: P,
\] 
 we get
\begin{align*}
    \Delta_M \log \vbp  &= \Delta_{P} \log \vbpu -  g^P\left(\nabla \log \vbpu, \,  \pi^\N_* \, \mcv^\N \right) \\
    % & = \Delta_{P} \log \underline{v}_\beta + \Delta_{P} \log \underline{v}_\N -  \tr(\beta^+ L_{\mcv^\N}) \\
    &\geq \sum_{j=1}^d \tr \left( \left( \nabla_{X_j} L\right)_{X_j} \cdot \beta^+ \right) -  \tr(\beta^+ L_{\mcv^\N})   \\
    &=  \la \Ric^{\N} + A^*A,  \beta^+ \ra - \la \Ric_M|_{\ve^\N}, \beta^+ \ra \\
    &\geq - \la \Ric_M|_{\ve^\N}, \beta^+ \ra.
\end{align*}
Here  $\vbpu \circ \pi^\N = \vbp$, $\underline{v}_\beta \circ \pi^\N = \vb$, $\underline{v}_\N \circ \pi^\N = v_\N$. The first equality is simply the formula for the divergence of invariant vector fields under an isometric group action \cite[Lemma B.3]{alek_sol}; the estimate in the second line uses \Cref{lem_vbp=vbvn} and \eqref{eqn_gradlogvbp}; the third line is just \eqref{eqn_EV}; 
and  the last estimate follows from  $A^*A \geq 0$, $\beta^+ > 0$, and the GIT Ricci curvature estimate $\la \Ric^\N, \beta^+\ra \geq 0$. 
\end{proof}

\begin{proof}[Proof of \Cref{thm_ricneg}]
Assuming that the nilradical of $\G$ has positive dimension, we may define the smooth function $\log \vbph : FM_{\sf reg} \to \RR$ as above, which by construction approaches $-\infty$ as we approach $FM_{\sf sing}$. Since $\G \backslash FM$ is compact and $\log \vbph$ is $\G$-invariant (\Cref{lem_hvbp}), $\log \vbph$ attains a global maximum somewhere in the non-singular part $FM_{\sf reg}$. By $\Or(n)$-invariance, this means that $\log \vbp$ also attains a maximum on $M_{\sf reg}$. Using \Cref{lem_estimate_Gunimod} at that point and the fact that $\beta^+ > 0$ and $\Ric_M < 0$ along orbits, we obtain a contradiction. Hence the nilradical must be trivial and $\G$ is semisimple.
\end{proof}

\section{Einstein manifolds with semisimple symmetry}\label{sec_ss}

In this section we continue working towards a proof  of Theorem \ref{thm_unimod}. The main result is \Cref{prop_Lpolar}. From now on and for the rest of the paper, unless otherwise stated, we assume that  $(M^n,g)$ is a complete Riemannian manifold with $\ric_g = -g$, on which a connected unimodular Lie group  $\Ll$ acts properly, isometrically and cocompactly. After taking a quotienti by the ineffective kernel ---a normal subgroup---, we may and will also assume that the action is effective.  By Theorem \ref{thm_ricneg}, we know that  $\Ll$ is semisimple.  Moreover, if we write $\Ll = \Ll_{c} \Ll_{nc}$, where $\Ll_c$ (resp.~ $\Ll_{nc}$) is the product of all normal compact (resp.~non-compact) simple factors, then the action of $\Ll_{nc}$ on $(M^n,g)$ is also proper, isometric and cocompact. Thus, we may assume that $\Ll = \Ll_{nc}$.

\begin{lemma}\label{lem_centerless}
The center $\Zz$ of $\Ll$ has a finite-index subgroup $\Zz_{f}\leq \Zz$ acting freely on $(M^n,g)$. In particular, $M/{\Zz_f}$ is locally isometric to $M$ and admits a proper, isometric and cocompact action of the semisimple Lie group $\Ll/{\Zz_f}$, which has finite center. 
\end{lemma}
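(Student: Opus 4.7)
The plan is to choose $\Zz_f$ as the torsion-free part of a structural decomposition of $\Zz$, and then to verify freeness and the stated inheritance properties.

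First, since $\Ll$ is semisimple, its Lie algebra has trivial center, so $\Zz = Z(\Ll)$ is a \emph{discrete} closed subgroup of $\Ll$. Because the $\Ll$-action is proper, each stabilizer $\Zz_p := \Zz \cap \Ll_p$ is both discrete and compact, and hence \emph{finite}.

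Next I would invoke the standard structural fact that the center of any connected semisimple Lie group is finitely generated abelian. (One argument: $\Zz \cong \pi_1(\Ll/\Zz)$, and by the Iwasawa decomposition $\Ll/\Zz$ is homotopy equivalent to its maximal compact subgroup, which is a closed manifold with finitely generated abelian fundamental group.) Consequently $\Zz \cong \ZZ^k \oplus T$, where $T$ is the finite torsion subgroup of $\Zz$. I set $\Zz_f \leq \Zz$ to be the $\ZZ^k$-summand of this decomposition, a subgroup of finite index $|T|$. Since each $\Zz_p$ is a finite group, it consists entirely of torsion elements, so $\Zz_p \subseteq T$, and therefore $\Zz_f \cap \Zz_p = \{e\}$ for all $p \in M$. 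In particular, $\Zz_f$ acts freely on $M$.

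Finally, as $\Zz_f$ is a discrete, central (hence normal) subgroup of $\Ll$ acting freely and properly, the quotient $M/\Zz_f$ carries an induced smooth Riemannian structure for which $M \to M/\Zz_f$ is a Riemannian covering, and therefore a local isometry. The induced $\Ll/\Zz_f$-action on $M/\Zz_f$ is automatically isometric and proper (a standard property of quotients by central subgroups of a properly acting group), and it is cocompact because its orbit space is canonically identified with $\Ll\backslash M$. Moreover, since the adjoint form $\Ll/\Zz$ is centerless, one gets $Z(\Ll/\Zz_f) = \Zz/\Zz_f \cong T$, which is finite.

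The only mildly delicate point is the second step: a priori the isotropies $\Zz_p$ could be an uncontrolled collection of finite subgroups of $\Zz$, but the structure theorem for finitely generated abelian groups guarantees that all of them are absorbed into the torsion part $T$, so that a single finite-index subgroup $\Zz_f$ works simultaneously for every $p \in M$.
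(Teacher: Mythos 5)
Your proof is correct and follows essentially the same route as the paper's. Both establish that $\Zz$ is a finitely generated abelian group (the paper cites Smith's theorem, recorded as \Cref{prop_fin_gen}; you sketch the argument via $\pi_1$ of the adjoint group), take $\Zz_f$ to be the free abelian part in the structure-theorem decomposition, and observe that $\Zz_f$ must meet each isotropy group trivially because the intersection is finite (compact and discrete) while $\Zz_f$ is torsion-free. The treatment of the quotient $M/\Zz_f$ and the center of $\Ll/\Zz_f$ is the same. One small imprecision in your sketch of finite generation: the isomorphism $\Zz \cong \pi_1(\Ll/\Zz)$ holds when $\Ll$ is simply connected; in general $Z(\Ll)$ is only a quotient of $\pi_1$ of the adjoint group, which is still enough since quotients of finitely generated groups are finitely generated (this reduction is exactly how the paper's \Cref{prop_fin_gen} proceeds).
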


\begin{proof}
By an old result of P.A. Smith \cite{Smi35} (see \Cref{prop_fin_gen}), $\Zz$ is a finitely-generated abelian group.  Let $\Zz = \Zz_f \Zz_t$ be the decomposition into its free abelian part $\Zz_f$ and the subgroup consisting of torsion elements $\Zz_t$. We  claim that $\Zz_f \leq \Ll$ acts freely on $M^n$. To see that, recall that for each $p\in M$ the isotropy group $\Ll_p$ is compact by properness. Hence, $\Zz_f \cap \Ll_p$ is compact and discrete, thus finite. Since $\Zz_f$ does not contain any non-trivial finite subgroups, it follows that $\Zz_f\cap \Ll_p = 1$, that is, $\Zz_f$ acts freely. Clearly, the quotient group $\Ll/\Zz_f$ acts freely, isometrically and cocompactly on $M/\Zz_f$, the latter endowed with the unique metric so that the covering map $M \to M/\Zz_f$ is a local isometry. Finally, the center of $\Ll/\Zz_f$ is isomorphic to $\Zz_t$, and is in particular finite. 
\end{proof}

In summary, throughout  the following sections we may and will assume that
\begin{assumption}
$\Ll$ is a connected, semisimple Lie group without compact simple factors and with finite center, acting properly, isometrically and cocompactly on a complete Einstein manifold $(M^n,g)$ with $\ric_g = -g$.
\end{assumption}

Consider now any Iwasawa decomposition
\[
    \Ll = \K \A \N,
\] 
with corresponding decomposition  $\lgo = \kg \oplus \ag \oplus \ngo$ on Lie algebra level. Recall that the latter is obtained from a Cartan involution $\theta \in \Aut(\lgo)$ as follows: the $+1$ and $-1$ eigenspaces for $\theta$ give rise to a  Cartan decomposition $\lgo = \kg \oplus \pg$. One then takes  a maximal abelian subspace $\ag \subset \pg$, and after choosing a notion of positivity for the restricted roots in $\ag^*$, $\ngo$ is the direct sum of all positive root spaces. 

Since $\Ll$ has finite center, $\K$ is a maximal compact subgroup of $\Ll$ \cite[Thm.~6.31]{Knapp2e}. The Lie subgroup $\G := \A \N \leq \Ll$ is solvable, simply-connected, and we have a diffeomorphism $\Ll = \K \G  \simeq \K \times \G $; in particular $\G$ is closed in $\Ll $ and $\G \cap \K = 1$. By analogy with the complex case, we call $\G$ a \emph{Borel subgroup}. (More precisely, $\G$ is simply the radical of a minimal parabolic subgroup.)  Let $\lgo = \kg \oplus \ag \oplus \ngo$ be the corresponding decomposition on Lie algebra level.  We say $\ggo := \Lie(\G) = \ag \oplus \ngo$ is a Borel subalgebra of $\lgo$, and $\ngo$ (and also $\N$) a \emph{Borel nilradical}. 

We denote by $\ngo^- := \theta(\ngo)$ the Borel nilradical corresponding to the opposite choice of positive roots. Recall that $\lgo = \ngo^- \oplus \mg \oplus \ag \oplus \ngo$, where $\mg = Z_\kg(\ag)$. When $\mg = 0$ the Lie algebra $\lgo$ is called split-semisimple (e.g. $\lgo = \slg(n,\RR)$), however this is far from true in general (e.g. for $\lgo = \sog(2,n)$, $\mg \simeq \sog(n-1)$).

\begin{lemma}\label{lem_GMcocompact}
The Borel subgroup $\G \leq \Ll$ acts on $(M,g)$ freely, properly, isometrically and cocompactly.
\end{lemma}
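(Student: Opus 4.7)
The plan is to verify the four properties one by one, making full use of the Iwasawa diffeomorphism $\Ll = \K\G \simeq \K \times \G$ and the basic structure of $\G = \A\N$ as a connected, simply-connected, solvable Lie group, both of which were recalled just above the lemma. That $\G$ acts isometrically is immediate from $\G \leq \Ll$.

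For properness, I would invoke the fact (already noted in the excerpt) that $\G$ is closed in $\Ll$. Properness of a continuous Lie group action always restricts to properness of any closed subgroup's action: given a sequence $(g_k, p_k) \in \G \times M$ with $p_k \to p$ and $g_k \cdot p_k \to q$ in $M$, properness of the $\Ll$-action yields a subsequence $g_k \to \ell$ in $\Ll$, and closedness of $\G$ forces $\ell \in \G$.

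For cocompactness, I would pick a compact set $C \subset M$ with $\Ll \cdot C = M$, available by $\Ll$-cocompactness, and use the (equivalent) Iwasawa decomposition $\Ll = \G\K$ to rewrite $M = \G \cdot (\K \cdot C)$. Since $\K$ is compact, $\K \cdot C$ is compact, whence $\G$ acts cocompactly.

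The main point is freeness, and this is where the solvable structure of $\G$ enters. Properness just established makes every isotropy $\G_p = \G \cap \Ll_p$ a compact subgroup of $\G$. I would then invoke the classical fact that a connected, simply-connected, solvable Lie group contains no non-trivial compact subgroup: choosing a chain of closed normal subgroups $1 = \G_0 \trianglelefteq \G_1 \trianglelefteq \cdots \trianglelefteq \G_N = \G$ with successive quotients $\G_i / \G_{i-1} \cong \RR$, any compact subgroup projects to a compact subgroup of $\RR$ at each stage, which must be trivial, so induction forces $\G_p = 1$. I do not expect a serious obstacle; the subtle hypothesis doing the work here is the simple-connectedness of $\G$, without which torsion elements of $\pi_1(\G)$ could produce non-trivial finite isotropies.
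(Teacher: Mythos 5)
Your proof is correct and follows essentially the same approach as the paper for isometry, properness (closedness of $\G$ in $\Ll$ plus restriction of properness to closed subgroups, which is the paper's Lemma~\ref{lem_proper}), and freeness (no nontrivial compact subgroups in the simply-connected solvable $\G$). The only variation is in cocompactness: you lift a compact set $C$ with $\Ll\cdot C = M$ and write $M = \G\cdot(\K\cdot C)$ using $\Ll = \G\K$, whereas the paper instead analyzes the continuous surjection $\G\backslash M \to \Ll\backslash M$ and shows its fibers are compact $\K$-orbits; both arguments are valid and yours is arguably the more elementary one.
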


\begin{proof}
Since $\Ll$ acts effectively and isometrically, the same is true for $\G$. Also, $\G\leq \Ll$ is  closed, thus properness of the $\Ll$-action implies that of $\G$ by \Cref{lem_proper}. Moreover, $\G$ is solvable and simply-connected, hence  it has no nontrivial compact subgroups.  It follows that $\G$ must act freely, and that $\G\backslash M$ is a manifold. 

Consider now the continuous and surjective map 
% The orbit space $\G\backslash M$ is thus a manifold, and it is furthermore compact: indeed,  {\alert justify compactness}.  
\[
    \G \backslash M \to \Ll \backslash M, \qquad  \G \cdot p \mapsto \Ll \cdot p.
    % \Ll \times M \to M \times M, \qquad (f, x) \mapsto (x, f\cdot x)
   % \Ll \backslash M \to \G \backslash M, \qquad (k, \LL \cdot p) \mapsto 
\]
We claim that its fibers are the compact sets $\{ \G \cdot (k\cdot p) : k\in K \} \subset \G\backslash M$. Indeed, since $\Ll = \K\G$,  two $\Ll$-orbits $\Ll\cdot p$, $\Ll\cdot q$ coincide if and only if $p = ks \cdot q$ for some $k\in \K, s\in \G$, and the latter is in turn equivalent to $\G \cdot (k^{-1} \cdot p) = \G \cdot q$.  
% \[
%     \Ll \cdot p = \Ll \cdot q \, \, \iff  \,\, p = ks \cdot p \,\, \iff  \,\, k \cdot p = s \cdot q \iff  \G \cdot (k \cdot p) = \G \cdot q, \qquad k\in \K, s\in \G.
% \]
Compactness of $\G\backslash M$ now easily follows from this claim and the compactness of $\Ll \backslash M$.
\end{proof}

Combining this with the results from \cite{alek_sol} and a Lie theoretic property of semisimple Lie algebras (\Cref{app_sslie}), we deduce the following:

\begin{proposition}\label{prop_Lpolar}
Let $(M^n, g)$, $\Ll$ be as in $\S$\ref{sec_ss}. Then, the action of $\Ll$ is polar. 
\end{proposition}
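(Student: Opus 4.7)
The plan is to apply \Cref{thm_Naction}(i) not just to one Iwasawa decomposition of $\Ll$, but to every $\Ll$-conjugate of a fixed one, and then to combine the resulting polarity statements via the Lie-theoretic fact that the $\Ll$-conjugates of $\ngo$ span $\lgo$.

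For each $l\in\Ll$, conjugation by $l$ produces another Iwasawa decomposition $\Ll = \K_l\A_l\N_l$, with $\K_l = l\K l^{-1}$, $\A_l = l\A l^{-1}$, $\N_l = l\N l^{-1}$ and Borel subgroup $\G_l := \A_l\N_l = l\G l^{-1}$. Since the proof of \Cref{lem_GMcocompact} only uses that $\Ll = \K_l\G_l$, that $\G_l$ is closed, simply-connected and solvable, and that $\K_l$ is compact, it applies verbatim to the conjugated data and shows that $\G_l$ acts freely, properly, isometrically and cocompactly on $(M,g)$. In particular this action is free, so $M$ equals its own regular part for $\G_l$, and also for the subgroup $\N_l\leq \G_l$. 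Hence \Cref{thm_Naction}(i) applied to the $\G_l$-action yields that its Borel nilradical $\N_l$ acts polarly on $(M,g)$: $A^{\N_l} = 0$ and the horizontal distribution $\ho^{\N_l}$ is involutive on $M$.

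Since $\N_l\leq\Ll$ we have $\ve^{\N_l}\subset\ve^\Ll$, and therefore $\ho^\Ll\subset\ho^{\N_l}$. Given two local sections $X,Y$ of $\ho^\Ll$ near a regular point $p$ for the $\Ll$-action, they are in particular sections of $\ho^{\N_l}$, so involutivity of the latter gives $[X,Y]_p\perp\ve^{\N_l}_p$ for every $l\in\Ll$. At the Lie algebra level $\Lie(\N_l) = \Ad(l)\ngo$, and the subspace $\sum_{l\in\Ll}\Ad(l)\ngo \subset \lgo$ is $\Ad(\Ll)$-invariant, hence an ideal of $\lgo$. Because $\lgo$ has no compact simple factor, the projection of $\ngo$ onto each simple ideal is the Borel nilradical of that factor and is non-zero; by simplicity this ideal meets, and therefore equals, every simple factor of $\lgo$, yielding $\sum_{l\in\Ll}\Ad(l)\ngo = \lgo$ (as recorded in \Cref{app_sslie}). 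Evaluating infinitesimal generators at $p$ then gives $\sum_{l\in\Ll}\ve^{\N_l}_p = \ve^\Ll_p$; combined with the previous step this forces $[X,Y]_p\perp\ve^\Ll_p$, i.e.\ $[X,Y]\in\Gamma(\ho^\Ll)$. Thus $\ho^\Ll$ is integrable, equivalently $A^\Ll = 0$, and the $\Ll$-action is polar.

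The only delicate point in this scheme is to ensure that \Cref{thm_Naction}(i) is available for every conjugated Iwasawa decomposition simultaneously, which reduces to a cosmetic inspection of the proof of \Cref{lem_GMcocompact}; the remaining ingredient is the purely Lie-theoretic statement that the Borel nilradicals of a real semisimple Lie algebra without compact simple factors generate it as an ideal, which is the content of \Cref{app_sslie}.
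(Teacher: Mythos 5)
Your proof is correct and follows essentially the same strategy as the paper: apply \Cref{thm_Naction}(\ref{item_Npolar}) to the Borel nilradicals of all $\Ll$-conjugates of a fixed Iwasawa decomposition (each of which is cocompact, free, etc.\ by the argument of \Cref{lem_GMcocompact}), and then invoke the Lie-theoretic fact (\Cref{prop_nilspan}) that these nilradicals linearly span $\lgo$ to upgrade $A^{\N_l}=0$ for every $l$ to $A^\Ll=0$. The only cosmetic difference is that you phrase polarity via involutivity of $\ho^\Ll$ and sketch a direct ``ideal meets every simple factor'' argument for the spanning statement, whereas the paper uses the equivalent formulation $\la\nabla_X U^*,Y\ra=0$ and proves the spanning fact via $\mg\oplus\ag\subset[\ngo,\theta\ngo]$; these are interchangeable.
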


\begin{proof}
As remarked above, it suffices to show that 
\begin{equation}\label{eqn_A=0}
    \la \nabla_X U^*, Y \ra = 0,
\end{equation}
for all $X,Y \in \Gamma(\ho^\Ll)$ and all $U\in \lgo$.   By \Cref{lem_GMcocompact} we may apply \Cref{thm_Naction}, \ref{item_Npolar} and deduce the vanishing of the integrability tensor $A^\N$ of the Riemannian submersion $M \to \N\backslash M$. In other words, \eqref{eqn_A=0}  holds for all $X,Y \in \Gamma(\ho^\N)$ and all $U\in \ngo$. Since $\ho^\Ll \subset \ho^\N$,  \eqref{eqn_A=0} holds for all $X,Y\in \Gamma(\ho^\Ll)$ and  all $U \in \ngo$. 

By varying the Iwasawa decomposition and applying \Cref{thm_Naction}, \ref{item_Npolar} to the action of other Borel subgroups $\G$, we obtain that \eqref{eqn_A=0} holds for all $U$ in \emph{some} Borel nilradical of $\lgo$. Since \eqref{eqn_A=0} is linear in $U$, the proposition follows from the fact that the Borel nilradicals linearly span all of $\lgo$ (\Cref{prop_nilspan}).
\end{proof}

From \Cref{lem_GMcocompact} we also obtain all  the structure results for the Riemannian submersion $M \to \N\backslash M$ arising  from \cite{alek_sol} and listed in \Cref{thm_Naction}.

\section{The Lie-theoretic structure of the orbits}\label{sec_Qtrans}

In this section our first goal is to prove \Cref{prop_Qtransitive}, which is key for our purposes, and to discuss a number of very important consequences of it. Recall that if $\M := N_\K(\ag)$ denotes the normaliser of $\ag$ in $\K$, one defines the (identity component of the) \emph{minimal parabolic subgroup} via
\begin{equation}\label{eqn_Q}
    \Q := (\M \A \N)_0.
\end{equation}
% We denote its  identity component by $\Q_0 \leq \Q$. 
For details about these subgroups see \cite[Ch.VII, $\S$7]{Knapp2e}.

\begin{proposition}\label{prop_Qtransitive}
Let $(M^n,g), \Ll$ be as in $\S$\ref{sec_ss} and let $\Q$ be defined as in \eqref{eqn_Q}, for any choice of Iwasawa decomposition. Then, $\Q$ acts transitively on all $\Ll$-orbits: 
that is,  $\Q \Ll_p = \Ll$ for any  $p\in M$.
 % acts transitively on the $\Ll$-orbits in $(M^n,g)$.
\end{proposition}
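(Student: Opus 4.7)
The plan is to reduce the transitivity $\Q\Ll_p=\Ll$ to a Lie-algebraic identity and then derive it from an extension of the algebraic curvature estimates of \cite[$\S$12]{alek_sol}, together with a maximum-principle technique in the spirit of Lemma~\ref{lem_hvbp}. Since $\Ll=\K\G$ and $\G\subseteq\Q$, replacing $p$ by $g^{-1}\cdot p$ for a suitable $g\in\G$ preserves the statement (as $\Q g=\Q$) and lets me assume $\Ll_p\subseteq\K$, using that any compact subgroup of $\Ll$ fixes a point of the symmetric space $\Ll/\K\cong\G$. Writing $\Q=\M\G$ and $\Ll=\K\G$, the conclusion then becomes $\K=\M\,\Ll_p$: the compact isotropy must act transitively on the flag manifold $\K/\M\cong\Ll/\Q$, or equivalently $\kg=\mg+\mathrm{Lie}(\Ll_p)$ at the level of Lie algebras.

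To obtain this identity, I would gather the structural data available along the $\Ll$-orbit through $p$. Proposition~\ref{prop_Lpolar} gives $A^\Ll=0$ and integrability of $\ho^\Ll$; Theorem~\ref{thm_Naction} gives the nilsoliton identity $L_{\mcv^\N}=-\beta^+$, harmonicity of the orbit-metric map $h$, and the quotient Ricci identity $\ric^P=-g^P+L^*L$. Substituting this into the Einstein equations (\ref{eqn_EV})--(\ref{eqn_EH}) along the Iwasawa splitting $\lgo=\kg\oplus\ag\oplus\ngo$ yields a system of pointwise relations between the intrinsic Ricci tensor of the orbit, the shape operators in directions normal to the orbit, and the Lie brackets of $\lgo$. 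The algebraic Ricci-curvature estimate of \cite[$\S$12]{alek_sol}, whose proof depends only on the pointwise inner product induced on $\lgo$ and on its Lie bracket, extends verbatim to this setting and becomes a pointwise inequality whose equality case is precisely $\kg=\mg+\mathrm{Lie}(\Ll_p)$.

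To force equality at every point, I would apply a Bochner-type argument inspired by Jablonski--Petersen \cite{JblPet14}. Concretely, I would identify an $\Ll$-invariant scalar function on $M_{\sf reg}$ that measures the defect in the estimate, verify that it satisfies a favorable differential inequality coming from the above structural identities combined with the Bochner formula for the Killing fields generated by a complement of $\mathrm{Lie}(\Ll_p)+\mg$ in $\kg$, and invoke the maximum principle on $\Ll\backslash M_{\sf reg}$ exactly as in $\S$\ref{sec_ricneg}. Unimodularity and cocompactness of the action then yield a global maximum in the regular part and force equality everywhere, giving the sought-for Lie-algebraic identity.

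The main obstacle is this last step: the $\S 12$ estimate of \cite{alek_sol} is originally established in the homogeneous case via integration over an $\Ll$-orbit together with the Lauret structure theorem for standard Einstein solvmanifolds, and it is not a priori clear that the pointwise version produced at an arbitrary $p\in M_{\sf reg}$ is sharp enough for the maximum-principle scheme to close. The appearance of the compact factor $\M$---absent in the purely solvable Borel action treated in \cite{alek_sol}---makes the bookkeeping substantially more delicate, since one must separately account for contributions coming from $\mg$ before the Bochner argument can be invoked, and must ensure the maximum produced lies in the regular, rather than singular, part of the orbit space, again via unimodularity of $\Ll$.
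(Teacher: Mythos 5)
Your overall direction is right---the proof does hinge on extending the algebraic Ricci estimate of \cite[$\S$12]{alek_sol} to the orbit $\Ll\cdot p$ and then forcing the equality case---but the crucial mechanism for forcing equality is not a maximum-principle or Bochner argument. The paper establishes a pointwise elliptic identity for $\log v_\N$ restricted to an $\Ll$-section $\Sigma_\Ll$,
\[
\Delta_{\Sigma_\Ll}\bigl(\log v_\N\circ\sigma\bigr)-\bigl\langle \mcv^\Ll,\nabla(\log v_\N\circ\sigma)\bigr\rangle=\dim\ngo+\tr_{\ve^\N}\Ric^\Ll,
\]
and then observes that the left-hand side vanishes identically \emph{because $\log v_\N$ is already known to be constant along $\Sigma_\Ll$}: by Theorem~\ref{thm_Naction}\eqref{item_HN_Gvert} (i.e.\ \cite[Thm.~11.3]{alek_sol}), $\mcv^\N=-\nabla\log v_\N$ is tangent to the $\G$-orbits, hence $\Ll$-vertical, hence orthogonal to $\Sigma_\Ll$. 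Combined with the algebraic estimate (whose nonnegativity and equality characterisation are exactly Lemma~\ref{lem_alg_est}), this gives equality at every point with no compactness or extremum argument at all.

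Your proposed route has a concrete circularity and a sharpness problem. At the stage where Proposition~\ref{prop_Qtransitive} is proved, compactness of $\Sigma_\Ll$ and triviality of the polar group are not yet known---those come later in Corollary~\ref{cor_sectioncpt} and depend on Proposition~\ref{prop_Ldotpsym}, which depends on Proposition~\ref{prop_Qtransitive}. So invoking a maximum principle on $\Ll\backslash M_{\sf reg}$ for a function measuring the ``defect'' is not available without a barrier argument near singular orbits, and even if it were, a maximum would give equality only at the extremal point, not on every orbit; you would then have to propagate from there, which is precisely what the constancy of $\log v_\N$ makes unnecessary. Your initial reduction---moving $p$ within its $\G$-orbit so that $\Ll_p\subseteq\K$, then rephrasing the conclusion as $\kg=\mg+\Lie(\Ll_p)$---is correct and clean, but the paper uses a different normalisation (Lemma~\ref{lem_aperpn}, moving $p$ so that $\ag\cdot p\perp\ngo\cdot p$), which is the hypothesis under which the extension of the \cite[Prop.~12.4--12.5]{alek_sol} estimate actually becomes applicable; your normalisation is not obviously compatible with that estimate. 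The honest assessment you give at the end, that it is ``not a priori clear'' the scheme closes, is accurate: that step as proposed does not close, and the missing ingredient is the $\Ll$-verticality of $\mcv^\N$ from Theorem~\ref{thm_Naction}\eqref{item_HN_Gvert}.
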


We introduce the following `partial trace' of a vertical endomorphism $E\in \End(\ve^\Ll)$:
\[
    \tr_{\ve^\N} E (p) := \sum_r g(E U_r, U_r)_p. 
\]
Here, $\{U_r\}_{r=1}^{\dim \ngo}$ is any vertical frame for $\ve^\N$ which is orthonormal at $p$ (it is easy to see that the definition is independent of such frame).
The proof of \Cref{prop_Qtransitive} will also yield:

\begin{corollary}\label{cor_tr_NRicL}
If $p\in M$ satisfies the condition \eqref{eqn_aperpn} below,  then  $\tr_{\ve^\N} \Ric_p^{\Ll} = -\dim \ngo$.
\end{corollary}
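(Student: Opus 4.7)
The approach is to apply O'Neill's vertical-Ricci identity \eqref{eqn_EV} to the $\Ll$-submersion, trace it over $\ve^\N \subset \ve^\Ll$, and use the condition \eqref{eqn_aperpn} to eliminate the extrinsic correction terms.

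Since Proposition~\ref{prop_Lpolar} gives $A^\Ll = 0$, and since $\ric_g = -g$, O'Neill's formula reads, for $U \in \ve^\Ll$ and any local $\ho^\Ll$-orthonormal frame $\{X_i\}$,
\[
    -g(U,U) = \ric^\Ll(U,U) + \la L^\Ll_{\mcv^\Ll} U, U\ra - \sum_i \la (\nabla_{X_i} L^\Ll)_{X_i} U, U\ra.
\]
Tracing over an orthonormal basis of $\ve^\N_p$ yields
\[
    \tr_{\ve^\N}\Ric^\Ll_p = -\dim \ngo - \tr_{\ve^\N} L^\Ll_{\mcv^\Ll} + \sum_i \tr_{\ve^\N}(\nabla_{X_i} L^\Ll)_{X_i},
\]
so the corollary reduces to the two vanishings $\tr_{\ve^\N} L^\Ll_{\mcv^\Ll}|_p = 0$ and $\sum_i \tr_{\ve^\N}(\nabla_{X_i} L^\Ll)_{X_i}|_p = 0$ at $p$.

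I expect \eqref{eqn_aperpn} to encode the orthogonality $\ag^*|_p \perp \ngo^*|_p$ in the induced metric on the $\Ll$-orbit. The plan is then to reduce both vanishings to their $\N$-submersion analogs, which are already available in Theorem~\ref{thm_Naction}. For the first, I would invoke Theorem~\ref{thm_Naction}, \ref{item_HN_Gvert}---applicable to the Borel subgroup $\G = \A\N$, since $\G$ is completely solvable, acts freely by Lemma~\ref{lem_GMcocompact}, and admits a left-invariant Einstein metric inherited from the associated non-compact symmetric space $\Ll/\K$---to conclude that $\mcv^\N_p \in \ag^*|_p$. Writing $\mcv^\Ll = \mcv^\N + (\mcv^\Ll - \mcv^\N)$ and recalling from Theorem~\ref{thm_Naction}, \ref{item_Nnilsol} that $L^\N_{\mcv^\N} = -\beta^+$ on $\ve^\N$, the $\ve^\N$-trace of $L^\Ll_{\mcv^\Ll}$ splits into a $-\tr\beta^+$ piece (from $\mcv^\N$) and a correction whose vanishing at $p$ is forced by \eqref{eqn_aperpn} combined with the transitivity of $\Q$ just established in Proposition~\ref{prop_Qtransitive}. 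For the second vanishing, I would extend the $\ho^\Ll$-frame $\{X_i\}$ to an $\ho^\N$-frame by adjoining Killing fields from $\ag$ and from the orthogonal complement of $\Lie(\Ll_p)$ in $\kg$, reduce the partial trace over $\ve^\N$ to the full $\ho^\N$-trace, and invoke the harmonic-map identity $\sum_i(\nabla_{X^\N_i}L^\N)_{X^\N_i} = 0$ from Theorem~\ref{thm_Naction}, \ref{item_HM}; the difference between $L^\Ll$ and $L^\N$, and between the two traces, is then controlled at $p$ by \eqref{eqn_aperpn}.

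The main obstacle I foresee lies in this second reduction: one must carefully track the contributions of the additional horizontal directions, i.e.~those in $\ho^\N$ but not in $\ho^\Ll$, which are spanned by Killing fields from $\ag$ and from $\kg$ modulo $\Lie(\Ll_p)$. Ensuring that their combined $\ve^\N$-trace contribution vanishes requires both \eqref{eqn_aperpn} (to decouple the $\ag$-part) and the $\Q$-transitivity of Proposition~\ref{prop_Qtransitive} (to handle the $\kg$-part via the minimal-parabolic structure).
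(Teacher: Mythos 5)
Your plan correctly reduces the claim, via the $\Ll$-vertical Einstein equation with $A^\Ll=0$ traced over $\ve^\N$, to the two vanishings $\tr_{\ve^\N} L^\Ll_{\mcv^\Ll}|_p = 0$ and $\sum_j\tr_{\ve^\N}(\nabla_{X_j}L^\Ll)_{X_j}|_p = 0$, and both of these vanishings do in fact hold at a point satisfying \eqref{eqn_aperpn}. However, your arguments for each of them are flawed.

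For the first vanishing, your splitting $\mcv^\Ll = \mcv^\N + (\mcv^\Ll - \mcv^\N)$ with the claim that the $\mcv^\N$-piece yields $-\tr\beta^+$ and the correction vanishes cannot be right: if it were, you would conclude $\tr_{\ve^\N}L^\Ll_{\mcv^\Ll} = -\tr\beta^+ \neq 0$, the opposite of what you need. In fact the correction term is $\tr_{\ve^\N} L^\N_{\mcv^\Ll - \mcv^\N} = -\la \mcv^\N, \mcv^\Ll - \mcv^\N\ra = \Vert \mcv^\N\Vert^2 - \la\mcv^\N,\mcv^\Ll\ra = \tr\beta^+ - 0$, which cancels the first piece rather than vanishing. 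The much shorter correct route (and implicitly the paper's) is a one-liner: $\tr_{\ve^\N} L^\Ll_{\mcv^\Ll} = -\la \mcv^\N, \mcv^\Ll\ra$, and this vanishes simply because $\mcv^\N$ is $\G$-vertical (Theorem \ref{thm_Naction}\eqref{item_HN_Gvert}), hence $\Ll$-vertical, while $\mcv^\Ll$ is by definition $\Ll$-horizontal. No expansion in terms of $\beta^+$ is needed.

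For the second vanishing, extending the $\ho^\Ll$-frame to an $\ho^\N$-frame and invoking the harmonic-map identity gives you the vanishing of the \emph{full} $\ho^\N$-trace $\sum_i\tr_{\ve^\N}(\nabla_{X_i^\N}L)_{X_i^\N}$, not of the partial $\ho^\Ll$-trace. You would still have to show that the directions in $\ho^\N \cap \ve^\Ll$ (spanned at $p$ by $(\mg\oplus\ag)\cdot p$, by Corollary \ref{cor_kperpp}) contribute zero, and you give no argument for this; appealing to \eqref{eqn_aperpn} and the $\Q$-transitivity is hopeful but not a proof, as you yourself flag. The paper sidesteps this entirely: it computes directly that $\sum_j\tr_{\ve^\N}(\nabla_{X_j}L)_{X_j} = \Delta_{\Sigma_\Ll}(\log v_\N\circ\sigma)$, and this Laplacian vanishes for the same reason as above — $\nabla\log v_\N = -\mcv^\N$ is $\Ll$-vertical, so $\log v_\N$ is constant on the section $\Sigma_\Ll$. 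The single input that $\mcv^\N$ is $\Ll$-vertical thus kills both terms at once; this is the content of \eqref{eqn_Dellogvn} and the equality case of Lemma \ref{lem_alg_est}, which is what the paper's one-line proof of Corollary \ref{cor_tr_NRicL} refers to.
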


We now start working towards proving \Cref{prop_Qtransitive}. Consider any identity component of a minimal parabolic $\Q = \M \A \N \leq \Ll$ associated to an Iwasawa decomposition $\Ll = \K \A \N$, and let $\Ll \cdot  p' \subset M$ be a principal $\Ll$-orbit.  Recall  that, due to the isometric action of $\Ll$ on $M$, we may identify any $X\in \lgo$ with the Killing field $X^* \in \Gamma(TM)$, and for  $\sg \subset \lgo$ we set 
\[
    \sg \cdot p :=   \{X^*_p : X \in \sg \} \subset \ve^\Ll(p) = T_p (\Ll\cdot p) \subset T_p M.
\]
We now show that we can change the point $ p'$  to another one on the same $\Ll$-orbit at which the algebra and the geometry are compatible:

\begin{lemma}\label{lem_aperpn}
There exists $p\in \N\cdot  p' \subset \Ll \cdot  p'$ such that
\begin{equation}\label{eqn_aperpn}
    \ag \cdot p \perp \ngo \cdot p.
\end{equation}
Moreover, if $\sigma  : \Sigma_\N \to M$ is a section through $p$ for the polar action of $\N$, then \eqref{eqn_aperpn} also holds along $\sigma(\Sigma_\N)$.
 % $\gamma : I \to M$ is an $\N$-horizontal curve with $\gamma(0) = p$, then \eqref{eqn_aperpn} also holds along $\gamma(t)$.
\end{lemma}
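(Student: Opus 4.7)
The plan is to reformulate the orthogonality condition on the $\G$-orbit $\G\cdot p'$ via a left-invariant picture, find $p$ by exploiting the nilsoliton structure, and then extend to the whole section using the $\A$-normalisation of $\N$. For the reformulation: by \Cref{lem_GMcocompact} the $\G$-action is free, so $\G\cdot p'\simeq\G$ via $g\cdot p'\mapsto g$, and $g|_{\G\cdot p'}$ becomes a left-invariant metric on $\G$; Killing fields $X^*$ with $X\in\ggo$ then correspond to right-invariant vector fields. For $p = n\cdot p'$ with $n\in\N$, the desired condition $\ag\cdot p\perp\ngo\cdot p$ translates into the Lie-algebraic statement $\Ad(n^{-1})\ag \perp_{\la\cdot,\cdot\ra_e}\ngo$, where $\la\cdot,\cdot\ra_e$ is the inner product on $\ggo = T_{p'}(\G\cdot p')$. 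Equivalently, one must realise the $\dim\ag$-dimensional subspace $\ngo^{\perp,\la,\ra_e}\subset\ggo$ as $\Ad(n^{-1})\ag$ for some $n\in\N$.

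For existence I would exploit \Cref{thm_Naction}(ii) and (v): $\N\cdot p'$ is a nilsoliton with $L_{\mcv^\N} = -\beta^+$, and $\mcv^\N$ is tangent to $\G$-orbits. In left-trivialization $\mcv^\N_{p'}$ sits in $\ho^\N(p')\cap\ve^\G(p')$, corresponding to a distinguished element of $\ngo^\perp\subset\ggo$ whose $\ag$-component is pinned down by the nilsoliton equation. The expansion $\Ad(\exp V)A = A + \sum_\alpha\alpha(A)V_\alpha + O(V^2)$ for $V = \sum_\alpha V_\alpha \in \ngo = \bigoplus_\alpha\ngo_\alpha$ then shows that $\Ad(n^{-1})\ag = \ngo^\perp$ is solvable precisely when the off-diagonal pairing $\la\cdot,\cdot\ra_e\colon\ag\times\ngo_\alpha\to\RR$ has the rank-one form $(A,X_\alpha)\mapsto\alpha(A)\phi_\alpha(X_\alpha)$ for each positive restricted root $\alpha$; the $V_\alpha\in\ngo_\alpha$ are then Riesz representatives of $-\phi_\alpha$, and a standard inverse-function argument absorbs the higher-order terms to produce the required $n_0\in\N$.

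For the moreover clause, fix such a $p$ and a section $\sigma(\Sigma_\N)$ through $p$. For each $A\in\ag$, the Killing field $A^*$ preserves the distribution $\ho^\N$ (since $\A$ normalises $\N$ by isometries) and hence permutes its integral leaves. Because $A^*_p\in T_p\sigma(\Sigma_\N) = \ho^\N(p)$, the $A^*$-flow of $p$ stays in $\sigma(\Sigma_\N)$, giving $\A\cdot p\subset\sigma(\Sigma_\N)$; then for every $a\in\A$ the leaf $a\cdot\sigma(\Sigma_\N)$ contains $a\cdot p\in\sigma(\Sigma_\N)$, forcing $a\cdot\sigma(\Sigma_\N) = \sigma(\Sigma_\N)$ by uniqueness of integral leaves. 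Thus $\A$ preserves $\sigma(\Sigma_\N)$ setwise, $A^*$ is tangent to $\sigma(\Sigma_\N)$ at every $p''\in\sigma(\Sigma_\N)$, and \eqref{eqn_aperpn} holds throughout.

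The hard part will be verifying the rank-one structure of $\la\cdot,\cdot\ra_e|_{\ag\times\ngo_\alpha}$ in the existence step. I expect to extract it from $L_{\mcv^\N} = -\beta^+$, which tightly couples $\ad(\ag)|_\ngo$ to the soliton derivation, together with the $\G$-tangency of $\mcv^\N$, in the spirit of the theory of standard Einstein solvmanifolds or the Bochner/maximum-principle arguments already used in \cite{alek_sol}.
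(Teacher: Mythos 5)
Your proof of the \emph{moreover} clause is correct and gives an alternative to the paper's argument. You argue via the flow of $A^*$: since $\A$ normalises $\N$ by isometries, $A^*$ preserves the distribution $\ho^\N$ and is flow-invariant, so the flow curve of $p$ has velocity $A^*_{\phi_t(p)} = (\phi_t)_* A^*_p \in (\phi_t)_*\ho^\N(p) = \ho^\N(\phi_t(p))$; hence the curve is horizontal, stays in the leaf $\sigma(\Sigma_\N)$, and uniqueness of leaves then forces $a\cdot\sigma(\Sigma_\N)=\sigma(\Sigma_\N)$. The paper reaches the same conclusion by instead pushing geodesics of the (totally geodesic) section through an isometry $L_a$ and checking the initial velocity lands in $\ho^\N$. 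Both work; yours is arguably more streamlined since it does not invoke total geodesy of sections.

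The existence part has a genuine gap. Your reformulation is fine: after left-trivialising the $\G$-orbit, the requirement becomes $\Ad(n^{-1})\ag \perp \ngo$, i.e.\ one needs $\Ad(n^{-1})\ag = \ngo^\perp$ inside $\ggo$ for some $n\in\N$. But for this to be possible at all, $\ngo^\perp$ must be an \emph{abelian subalgebra} of $\ggo$ complementary to $\ngo$ (since $\Ad(n^{-1})\ag$ is always one); that structural fact is precisely what the polarity of the $\N$-action (\Cref{thm_Naction}\ref{item_Npolar}) delivers, via the identification of $\G\cdot p'$ with a \emph{standard} solvmanifold in the sense of Heber. You never invoke polarity in the existence step and never verify abelianness of $\ngo^\perp$. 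Your alternative route—to extract a ``rank-one'' form of the pairing $\ag\times\ngo_\alpha\to\RR$ from the nilsoliton identity $L_{\mcv^\N}=-\beta^+$ and \Cref{thm_Naction}\ref{item_HN_Gvert}, then close with an inverse-function argument—is only a first-order heuristic (and in fact it presupposes the $\ngo_\alpha$'s are mutually orthogonal, another consequence of standardness). You explicitly flag this as ``the hard part'' you ``expect'' to verify, but it is not a routine computation: as you have set it up, establishing the rank-one structure is essentially equivalent to re-proving the transitivity of $\Ad(\N)$ on abelian complements to $\ngo$. The paper avoids all of this by establishing standardness from polarity and then citing Heber's result \cite[Cor.~2.10]{Heb}, which states that transitivity directly and gives the desired $x\in\N$ with $\ag=\Ad_x(\ag_{p'})$ in one step.
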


\begin{proof}
Polarity of the $\N$-action (\Cref{thm_Naction}, \ref{item_Npolar}) implies that the simply-connected solvmanifold $\G \cdot  p'$ is \emph{standard}, in the sense of \cite{Heb}: that is, the subspace of Killing fields 
\[
        \ag_{ p'} := \{ X\in \ggo : X^*_{ p'} \perp \ngo \cdot  p' \}
\]
is an abelian Lie subalgebra of $\ggo$ complementary to $\ngo$. By \cite[Cor.~2.10]{Heb} (which may be used since $\G$ comes from an Iwasawa decomposition), $\Ad(\N)$ acts transitively on the set of such abelian complements, thus $\ag = \Ad_{x}(\ag_{p'})$ for some $x\in \N$. Then, for $p= x\cdot p'$, we have $\ag_p = \Ad_x(\ag_{p'}) = \ag$, as desired.

For the second claim, let $a\in \A$, let $L_a : M \to M$ denote the corresponding isometry, and set $q := a\cdot p$. Since $\A$ normalises $\N$, $a \cdot (\N\cdot p) = (a \N a^{-1}) \cdot (a \cdot p) = \N \cdot q$, thus $\A$ maps $\N$-orbits to $\N$-orbits. In particular, $(L_a)_*$ maps $\ve^\N(p)$ to $\ve^\N(q)$ and $\ho^\N(p)$ to $\ho^\N(q)$. Since $\ag\cdot p \subset \ho^\N(p)$ and $(L_a)_*(\ag\cdot p) = \ag \cdot q$, we deduce that $\ag \cdot q \subset \ho^\N(q)$. This implies that $\A \cdot p \subset \sigma(\Sigma_\N)$.

We now claim that $\Sigma_\N$ is invariant under $\A$, from which the second claim follows immediately. Let $\gamma : I \to \sigma(\Sigma_\N)\subset M$ be a geodesic with $\gamma(0) = p$. For any $a\in \A$, $a\cdot \gamma$ is a geodesic  through $q \in \sigma(\Sigma_\N)$, with velocity $ (a\cdot \gamma)'(0) = (L_a)_* (\gamma'(0)) \in \ho^\N(q)$.  Hence, $a \cdot \gamma$ is a horizontal geodesic. It follows that $\sigma(\Sigma_\N)$ is $\A$-invariant. 
\end{proof}

By \Cref{prop_Lpolar}, through each $p\in M$ there exists a \emph{section} $\sigma:\Sigma_\Ll \to M$ for the $\Ll$-action: an (immersed) submanifold intersecting all $\Ll$-orbits orthogonally \cite{HLO,GZ12}. It is well-known that the section is a totally geodesic submanifold, hence, by \Cref{lem_aperpn}, \Cref{eqn_aperpn} is satisfied for all points on $\sigma(\Sigma_\Ll)$, provided it holds for $p$. We endow $\Sigma_\Ll$ with the Riemannian metric $\sigma^* g$.

Consider the smooth function
\[
    \log v_\N \circ \sigma : \Sigma_\Ll \to \RR,
\]
where $v_\N$ is the relative volume density of the $\N$-orbits in $(M^n,g)$ (see $\S$\ref{sec_volume_density}). To prove Proposition \ref{prop_Qtransitive}, we will establish a formula for the Laplacian of $\log v_\N \circ \sigma$ and an algebraic estimate for the zeroth-order term in said formula. The potential non-compactness of $\Sigma_\Ll$ plays no role here, since $\log v_\N \circ \sigma$ turns out to be constant by \Cref{thm_Naction}, \Cref{item_HN_Gvert}.

\begin{lemma}
The function $\log v_\N \circ \sigma : \Sigma_\Ll \to \RR$ satisfies the following PDE on $\Sigma_\Ll$:
\begin{equation}\label{eqn_Dellogvn}
            \Delta_{\Sigma_\Ll} (\log v_\N \circ\sigma) - \big\la \mcv^\Ll , \nabla (\log v_\N\circ\sigma) \big\ra = \dim \ngo + \tr_{\ve^\N} \Ric^{\Ll}.
\end{equation}
\end{lemma}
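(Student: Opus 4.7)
The plan is to prove the PDE by tracing the O'Neill formula \eqref{eqn_EV} for the polar $\Ll$-Riemannian submersion (where $A^\Ll=0$ by \Cref{prop_Lpolar}) over the subspace $\ve^\N\subset\ve^\Ll$, and matching the outcome with the intrinsic Laplacian on $\Sigma_\Ll$ computed from the first-variation formula $X(\log v_\N)=\tr L^\N_X$ of $\S$\ref{sec_volume_density}. Three geometric inputs underlie the entire argument: (i) $\sigma(\Sigma_\Ll)$ is a totally geodesic submanifold of the $\Ll$-regular part of $M$, because the $\Ll$-action is polar; (ii) the orthogonal decomposition $\ho^\N(p)=T_p\Sigma_\Ll\oplus(\ve^\Ll(p)\ominus\ve^\N(p))$ holds on all of $\sigma(\Sigma_\Ll)$, an immediate consequence of $\ag\cdot p\perp\ngo\cdot p$ (\Cref{lem_aperpn}); and (iii) the identity $\tr L^\N_X=\tr_{\ve^\N}L^\Ll_X$ for $X\in\ho^\Ll\subset\ho^\N$, which holds because $L^\N_X|_{\ve^\N}$ is the $\ve^\N$-compression of $L^\Ll_X|_{\ve^\N}$.

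Using (i)--(iii), the first half of the calculation is straightforward. Tracing \eqref{eqn_EV} for the $\Ll$-submersion against an ON basis $\{U_r\}$ of $\ve^\N$ and imposing $\ric_g=-g$ yields
\[
\dim\ngo+\tr_{\ve^\N}\Ric^\Ll \;=\; -\tr_{\ve^\N}(L^\Ll_{\mcv^\Ll})+\sum_j \tr_{\ve^\N}\bigl((\nabla_{X_j}L^\Ll)_{X_j}\bigr),
\]
with $\{X_j\}$ an ON frame of $T\Sigma_\Ll=\ho^\Ll$. Writing $\la L^\Ll_{\mcv^\Ll}U_r,U_r\ra=\la(\nabla_{U_r}U_r)^{\ho^\Ll},\mcv^\Ll\ra$ and using the definition $\mcv^\N=\sum_r(\nabla_{U_r}U_r)^{\ho^\N}$ together with (ii), the first trace collapses to $\tr_{\ve^\N}(L^\Ll_{\mcv^\Ll})=\la\mcv^\N,\mcv^\Ll\ra$. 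The drift on the LHS is equally immediate: $\nabla\log v_\N=-\mcv^\N$ (from \eqref{eqn_mcv_logvn}) and $\mcv^\Ll\in\ho^\Ll=T\Sigma_\Ll$ force $\la\mcv^\Ll,\nabla(\log v_\N\circ\sigma)\ra=-\la\mcv^\Ll,\mcv^\N\ra$.

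Combining these reductions, the lemma becomes the identity
\[
\Delta_{\Sigma_\Ll}(\log v_\N\circ\sigma) \;=\; \sum_j \tr_{\ve^\N}\bigl((\nabla_{X_j}L^\Ll)_{X_j}\bigr) \;-\; 2\la\mcv^\N,\mcv^\Ll\ra,
\]
which I will verify by computing $\Delta_{\Sigma_\Ll}$ directly. By (iii) and the first-variation formula, the intrinsic gradient of $\log v_\N\circ\sigma$ is dual to the 1-form $X\mapsto\tr_{\ve^\N}L^\Ll_X$. Its divergence on $\Sigma_\Ll$, in a local frame geodesic at the point of interest (so that $\nabla_{X_j}X_j=0$ also in $M$, by total-geodesy), splits via the product rule into the $(\nabla_{X_j}L^\Ll)$-trace plus a commutator term of the form $\sum_j\tr\bigl(L^\Ll_{X_j}\,\nabla_{X_j}P_{\ve^\N}\bigr)$ that records the twist of $\ve^\N\subset TM$ along $T\Sigma_\Ll$. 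I expect this last step to be the main obstacle: identifying this commutator with $-2\la\mcv^\N,\mcv^\Ll\ra$ requires the extra structural input of \Cref{thm_Naction}\ref{item_HN_Gvert}, which confines $\mcv^\N$ to $\ag\cdot p$ on $\sigma(\Sigma_\Ll)$ and thereby forces the twist to live in the $\ho^\Ll$-component of the vertical complement, where the decomposition in (ii) makes the sign and coefficient come out exactly as required.
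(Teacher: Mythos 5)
The overall strategy — trace the $\Ll$-orbit Einstein equation \eqref{eqn_EV} over $\ve^\N$, reduce the $L_{\mcv^\Ll}$-trace to $\la\mcv^\N,\mcv^\Ll\ra$, then compute $\Delta_{\Sigma_\Ll}(\log v_\N\circ\sigma)$ directly — is exactly the paper's. But there is a sign error in your intermediate identity which sends the endgame in the wrong direction. With the paper's convention $L_X U = \ve^\Ll(\nabla_U X)$ one has $\la L_{\mcv^\Ll} U_r,U_r\ra = \la\nabla_{U_r}\mcv^\Ll,U_r\ra = -\la\mcv^\Ll,\nabla_{U_r}U_r\ra$, so the correct reduction is
\[
\tr_{\ve^\N} L_{\mcv^\Ll} \;=\; -\,\la\mcv^\N,\mcv^\Ll\ra ,
\]
not $+\la\mcv^\N,\mcv^\Ll\ra$ as you wrote. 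Plugging the correct sign into your trace of \eqref{eqn_EV} and into the drift term $\la\mcv^\Ll,\nabla(\log v_\N\circ\sigma)\ra=-\la\mcv^\Ll,\mcv^\N\ra$, the two $\la\mcv^\N,\mcv^\Ll\ra$ contributions cancel and the lemma reduces to the clean identity $\Delta_{\Sigma_\Ll}(\log v_\N\circ\sigma)=\sum_j\tr_{\ve^\N}\bigl((\nabla_{X_j}L)_{X_j}\bigr)$, with \emph{no} correction term.

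As a consequence, the ``commutator'' term $\sum_j\tr\bigl(L_{X_j}\nabla_{X_j}P_{\ve^\N}\bigr)$ that you expect to equal $-2\la\mcv^\N,\mcv^\Ll\ra$ in fact vanishes, and the mechanism you invoke (confining $\mcv^\N$ to $\ag\cdot p$ via \Cref{thm_Naction}\ref{item_HN_Gvert}) is not the reason. The vanishing is purely algebraic: writing the commutator out in a vertical frame gives $2\sum_{j,r,s}\la L_{X_j}U_r,U_s\ra\,\la\nabla_{X_j}U_r,U_s\ra$, where the first factor is symmetric in $r,s$ (because $L_{X_j}$ preserves $\ve^\N$, using polarity of the $\N$-action, \Cref{thm_Naction}\ref{item_Npolar}) while the second is alternating in $r,s$ (differentiating $\la U_r,U_s\ra=\delta_{rs}$, with $\nabla_{X_j}U_r$ being $\Ll$-vertical by \Cref{prop_Lpolar}). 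So the step you flag as ``the main obstacle'' does not close as planned: you are chasing a $-2\la\mcv^\N,\mcv^\Ll\ra$ that is an artifact of the sign slip. Once the sign is corrected, your inputs (i)--(iii) together with the symmetric-times-alternating cancellation complete the proof, and the argument becomes the one in the paper.
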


\begin{proof}
Since $\sigma$ is a local isometry and the computation is local, we may perform it on $\sigma(\Sigma_\Ll) \subset M$. Let $p\in \sigma(\Sigma_\Ll)$, let $\{U_r \}$ be a local orthonormal frame for $\ve^\N$, and choose a local orthonormal frame  $\{X_j\}$ for $\ho^\Ll$ such that $\nabla_{X_j} {X_j}\big|_p = 0$ for all $j$. We compute:
\begin{align*}
    \Delta_{\sigma(\Sigma_\Ll)} \log v_\N &=  -\sum_j \la \nabla_{X_j} \mcv^\N, X_j \ra = -\sum_{j,r}  X_j \la \nabla_{U_r} U_r, X_j \ra  = \sum_{j,r} X_j \la L_{X_j} U_r, U_r \ra \\
    &=  \sum_{j,r}  \left( \la (\nabla_{X_j} L)_{X_j} U_r, U_r \ra + \la L_{X_j} \nabla_{X_j} U_r, U_r \ra + \la L_{X_j} U_r, \nabla_{X_j} U_r \ra \right) \\
    &= \tr_{\ve^\N} \sum_j  (\nabla_{X_j} L)_{X_j} +  2 \, \sum_{j,r,s} \la L_{X_j} U_r, U_s \ra \, \la \nabla_{X_j} U_r, U_s \ra \\
    & = \tr_{\ve^\N} \sum_j  (\nabla_{X_j} L)_{X_j},
\end{align*}
where $L_{X_j}$ are the shape operators of the $\Ll$-orbits. Notice that in the first equality we should have used the $\ho^\Ll$-component of $\nabla \log v_\N = -\mcv^\N$, but we may instead compute with $-\mcv^\N$ thanks to $A^\Ll = 0$. In the second-to-last equality we have used that the $L_{X_j}$ preserve the distribution $\ve^\N$, because the $\N$-action is polar (\Cref{thm_Naction}). Regarding the last equality, the  sum over $j,r,s$ vanishes since the first factor is symmetric in $r,s$ while the second one is alternating (notice also that $\nabla_{X_j} U_r$ is $\Ll$-vertical due to \Cref{prop_Lpolar}). Also,
\[
   -\la \mcv^\Ll, \nabla \log v_\N \ra =  \la \mcv^\Ll, \mcv^\N\ra  =   \sum_r \la \mcv^\Ll, \nabla_{U_r} U_r \ra = -\tr_{\ve^\N} L_{\mcv^\Ll}.
\]

Finally, since $A^\Ll=0$ by \Cref{prop_Lpolar}, the vertical part of the Einstein  equation for the $\Ll$-submersion \eqref{eqn_EV} (in endomorphism form) yields
\[
    -\Id_{\ve^\Ll} = \Ric^{\Ll} + L_{\mcv^\Ll} - \sum_j (\nabla_{X_j} L)_{X_j}.
\]
 Tracing over $\ve^\N$ gives
\[
    \tr_{\ve^\N} \sum_j (\nabla_{X_j} L)_{X_j} - \tr_{\ve^\N} L_{\mcv^\Ll} = \dim \ngo + \tr_{\ve^\ngo} \Ric^{\Ll},
\]
and the lemma follows.
\end{proof}

 Regarding the right-hand-side in \eqref{eqn_Dellogvn}, we have the following key algebraic curvature estimate:

\begin{lemma}\label{lem_alg_est}
Let $p\in M_{\sf reg}$ satisfy \eqref{eqn_aperpn}. Then, 
\[
     \dim \ngo + \tr_{\ve^\N} \Ric^{\Ll}(p) \geq 0,
\]
with equality if and only if $\Ll \cdot p = \Q \cdot p$, where $\Q$ is defined in \eqref{eqn_Q}.
 % the minimal parabolic subgroup associated to the Iwasawa decomposition $\Ll = \K \A \N$. 
\end{lemma}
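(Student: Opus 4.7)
The strategy is to interpret $\tr_{\ve^\N}\Ric^\Ll(p)$ as a purely algebraic quantity on the homogeneous Riemannian orbit $\Ll\cdot p \simeq \Ll/\Ll_p$ and then to estimate it using the Iwasawa structure of $\lgo$. After conjugating the chosen Iwasawa decomposition by an element of $\Ll$ I may assume $\lgo_p \subseteq \kg$; combined with the free $\G$-action from \Cref{lem_GMcocompact} (which gives $\ggo\cap\lgo_p = 0$), this yields a reductive decomposition $\lgo = \lgo_p\oplus\mg$ with $\mg = \mg_\kg \oplus \ag \oplus \ngo$, where $\mg_\kg$ is an $\Ad(\Ll_p)$-invariant complement of $\lgo_p$ in $\kg$. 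The induced inner product $\ip_p$ on $\mg$ (from evaluation of Killing fields at $p$) satisfies $\ngo\perp\ag$ by hypothesis.

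The next step is to plug in the classical formula for the Ricci tensor of an invariant metric on a reductive homogeneous space (e.g.\ Besse, Cor.~7.38): for $X\in\mg$,
\[
   \ric^\Ll(X,X) = -\tfrac12\sum_i \Vert[X,e_i]_\mg\Vert^2 - \tfrac12 B(X,X) + \tfrac14\sum_{i,j}\la[e_i,e_j]_\mg, X\ra^2 + \tfrac12\sum_i\Vert [X,e_i]_{\lgo_p}\Vert^2_{\lgo_p},
\]
where $\{e_i\}$ is an $\ip_p$-orthonormal basis of $\mg$ and the mean-curvature-type term vanishes because $\Ll$ is unimodular. Taking $X = U_r$ running over an orthonormal basis of $\ngo$, the Killing-form contributions vanish since $\ad(U_r)$ is nilpotent, so $B(\ngo,\ngo) = 0$. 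The task then reduces to showing that the remaining algebraic expression is bounded below by $-\dim\ngo$, with equality characterising $\Ll\cdot p = \Q\cdot p$.

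For the bound itself, I would choose $\{e_i\}$ compatible with the $\ip_p$-orthogonal splitting of $\mg$ into $\ngo$ and its complement (the latter containing $\ag$ by assumption, and together with $\ngo$ spanning $\mg_\kg \oplus \ag \oplus \ngo$). The inclusions $[\ag,\ngo], [\ngo,\ngo]\subseteq \ngo$ and the $\ad$-nilpotence of $\ngo$ easily control the $\ag$- and $\ngo$-contributions; the delicate directions come from the $\mg_\kg$-components of the $\{e_i\}$, whose $\theta$-anti-invariant parts project into $\ngo\oplus\ngo^-$ and produce commutators $[U_r,\cdot]$ governed by $\slg_2$-triples associated to restricted roots. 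The resulting pointwise bounds combine with the quartic `moment-map' term $\tfrac14\sum_{i,j}\la[e_i,e_j]_\mg, U_r\ra^2$ to yield the desired estimate, generalising \cite[$\S$12]{alek_sol}. Chasing the equalities through the chain forces $\ad(\ngo)\cdot\ngo^- \subseteq \lgo_p + \qg$, equivalently $\lgo = \lgo_p + \qg$, which lifts to $\Ll = \Ll_p\Q$ and hence $\Ll\cdot p = \Q\cdot p$.

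The main obstacle will be precisely the upgrade of the algebraic Ricci estimate from the transitive setting of \cite[$\S$12]{alek_sol} (where $\Ll_p$ is trivial, $\mg = \lgo$, and an adapted basis splits cleanly along the Iwasawa decomposition) to the present case of nontrivial compact isotropy $\Ll_p\subseteq \K$: the extra positive term $\tfrac12\sum_i\Vert [X,e_i]_{\lgo_p}\Vert^2_{\lgo_p}$ depends on an auxiliary $\Ad(\Ll_p)$-invariant inner product on $\lgo_p$ (whereas the intrinsic $\ric^\Ll$ does not), so one must commit to a consistent choice (for example $-B|_{\lgo_p}$) and verify that with it the inequalities are sharp exactly when $\Ll = \Q\Ll_p$.
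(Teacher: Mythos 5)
Your proposal treats the estimate as a purely Lie-theoretic consequence of the reductive structure $\lgo = \lgo_p \oplus \mg$ together with the gauge condition $\ag\cdot p \perp \ngo\cdot p$, plugging into Besse's Ricci formula and then trying to bound the resulting algebraic expression directly. This is not the paper's route, and more importantly it cannot work without further input: the quantity $\dim\ngo + \tr_{\ve^\N}\Ric^\Ll(p)$ is \emph{not} non-negative for a generic $\Ll$-invariant metric on $\Ll/\Ll_p$ satisfying \eqref{eqn_aperpn}, so no amount of chasing structure constants will produce the inequality from scratch. Already for $\Ll = \Sl(2,\RR)$ and a one-parameter family of left-invariant metrics with $\ngo\perp\ag$ one can make $\ric^\Ll(U,U)$ as negative as desired while keeping $\Vert U^*\Vert = 1$; the estimate is a statement about the \emph{particular} metric induced by the ambient Einstein metric.

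What your sketch omits is precisely the PDE-derived data that makes the paper's argument go through, namely the conclusions of \Cref{thm_Naction} for the $\G = \A\N$-action. The actual proof starts from the algebraic identity \cite[Prop.~12.4]{alek_sol} for $\scal^\N(p) - \tr_{\ve^\N}\Ric^\Ll(p)$, and then uses three facts that come from the Einstein equation, not from the algebra: (a) $\mcv^\N$ is $\G$-vertical by \Cref{thm_Naction}\eqref{item_HN_Gvert}, so there is $A\in\ggo$ with $A^*_p = -\mcv^\N_p$, and \eqref{eqn_aperpn} forces $A\in\ag$; (b) the nilsoliton relations \eqref{eqn_RicveN} pin down the spectrum of $D:=\ad_\lgo A$ ($D|_\ngo>0$, $D|_{\theta\ngo}<0$, $D|_{\mg\oplus\ag}=0$, $\tr D|_\ngo = \tr\beta^+$), which is exactly what is needed to run the algebraic estimate \cite[Prop.~12.5]{alek_sol}; and (c) the same relations give $\scal^\N = -\dim\ngo + \tr\beta^+$, which converts the estimate into the stated one. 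None of this appears in your outline, and the phrase ``easily control the $\ag$- and $\ngo$-contributions'' is doing all of the work that should really be done by (a)--(c). A secondary but worth noting issue: the vanishing of the mean-curvature-type term in Besse's formula is not a consequence of unimodularity of $\Ll$ alone; one also needs compactness of $\Ll_p$ (which does hold here). Also, \cite[$\S$12]{alek_sol} does \emph{not} assume trivial isotropy; those are local curvature identities designed precisely so that the paper can apply them to the orbit $\Ll\cdot p$ with $\Ll_p$ nontrivial, so the ``upgrade'' you flag as the main obstacle is not where the real difficulty lies.
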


\begin{proof}
We apply \cite[Prop.~12.4]{alek_sol} to the orbit $\Ll\cdot p$. Notice that $\N \leq \Ll$ is nilpotently embedded and acts freely and polarly on $\Ll\cdot p$. Then, that result gives
\begin{equation}\label{eqn_Prop12.4}
  \scal^\N(p) - \tr_{\ve^\N} \Ric^\Ll(p) = \sum_i \la \nabla_{E^*_i} E^*_i, \mcv^\N \ra_p + \sum_{i,r} \la [U^*_r,[U^*_r, E^*_i]], E^*_i \ra_p,
\end{equation}
where $\{E_i\}_{i=1}^n$ is any set in $\lgo$ containing a basis $\{U_r\}_{r=1}^{\dim \ngo}$ for $\ngo$, and so that $\{E_i^* \}$ is an orthonormal basis of $T_p (\Ll\cdot p)$.  Choose now $A\in \lgo$ with $A^*_p = -\mcv^\N_p$. By \Cref{thm_Naction}, \Cref{item_HN_Gvert}, we may assume $A\in \ggo = \ag \oplus \ngo$. Condition \eqref{eqn_aperpn} then gives $A\in \ag$. By definition of the Iwasawa decomposition, this implies that $D:= \ad_\lgo A : \lgo \to \lgo$ is diagonalisable with real eigenvalues. Arguing precisely as in the proof of \cite[Lemma 14.2]{alek_sol}, it follows from \Cref{thm_Naction}, \eqref{eqn_RicveN} that $D|_{\ngo} > 0$, $D|_{\theta \ngo} < 0$, $D|_{\mg\oplus \ag} = 0$, and moreover $\tr D|_{\ngo} = \tr \beta^+$. Then, \cite[Prop.~12.5]{alek_sol} applies and gives a choice of Killing fields $\{E_i^*\}$ for which \eqref{eqn_Prop12.4} becomes
\[
  \scal^\N(p) - \tr_{\ve^\N} \Ric^\Ll(p) \leq \tr \beta^+,
\]
with equality if and only if $\Q \cdot p = \Ll \cdot p$. Replacing $\scal^\N$ using \eqref{eqn_RicveN} gives the desired estimate.
\end{proof}

% {\alert Is there a way of phrasing this more geometrically ? How general is this comparison between traces of Riccis? Does it really need that the $\N$-orbits are solitons, or only a polar action is enough?  (one could perhaps replace $\sigma^+$ with the mean curvature of the $\N$-orbits) }

\begin{proof}[Proof of Proposition \ref{prop_Qtransitive}]
% Let $\Q = \M \A \N$ be a minimal parabolic subgroup. 
Given a principal $\Ll$-orbit, by \Cref{lem_aperpn} we may choose a point $p\in M$ on it so that \eqref{eqn_aperpn} holds. By \Cref{lem_aperpn}, \eqref{eqn_aperpn} will also hold along the corresponding section $\Sigma_\Ll$ for the $\Ll$-action. Since $\mcv^\N = -\nabla \log v_\N$ is $\G$-vertical by \Cref{thm_Naction}, \eqref{item_HN_Gvert}, and in particular also $\Ll$-vertical,  $\log v_\N$  must be constant along $\Sigma_\Ll$. Thus, the left-hand-side in \eqref{eqn_Dellogvn} vanishes, giving equality in \Cref{lem_alg_est}, and hence $\Q\cdot p = \Ll \cdot p$. 
% If $\Hh$ denotes the isotropy subgroup at $p$, the last claim is equivalent to the fact that $\Q^0 \Hh = \Ll$. This must also hold for singular orbits, as they have a larger isotropy group strictly containing $\Hh$ (up to conjugation). 
\end{proof}

\begin{proof}[Proof of Corollary \ref{cor_tr_NRicL}]
From the proof of Proposition \ref{prop_Qtransitive} we have equality in Lemma \ref{lem_alg_est}.
\end{proof}

We now present some remarkable  algebraic consequences of \Cref{prop_Qtransitive}.

\begin{corollary}\label{cor_kperpp}
At a point $p\in M_{\sf reg}$ satisfying \eqref{eqn_aperpn} the following hold:
\begin{enumerate}
  \item $\ngo \cdot p \perp ( \mg\oplus \ag) \cdot p$; \label{item_a+mperpn}
     \item  $\ngo \cdot p \perp \kg \cdot p$; \label{item_nperpk}
     \item $\ngo^- \cdot p \perp \kg \cdot p$; \label{item_n-perpk}
    \item $\ngo \cdot p \subset \pg \cdot p$. \label{item_npinpp}
\end{enumerate}
\end{corollary}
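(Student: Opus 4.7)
My approach combines three ingredients: the transitivity $\Q\cdot p=\Ll\cdot p$ from \Cref{prop_Qtransitive}, the orthogonality $\ag\cdot p\perp\ngo\cdot p$ from \Cref{lem_aperpn}, and fine information about the equality case of the algebraic estimate invoked in \Cref{lem_alg_est}. As a preparatory step, I arrange $\lgo_p\subset\kg$ by conjugating the Iwasawa data inside $\Ll$; compatibility with \eqref{eqn_aperpn} is ensured by exploiting the freedom in the choice of $p$ within its $\N$-orbit (\Cref{lem_aperpn}) together with the residual $\M$-symmetry, which preserves $\ag$, $\ngo$, and the condition \eqref{eqn_aperpn}.

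For item (1), I expect the orthogonality $\mg\cdot p\perp\ngo\cdot p$ to emerge from the equality case of [Prop.~12.5, \cite{alek_sol}] that was used to deduce \Cref{prop_Qtransitive}: since equality is achieved at $p$, it should rigidify the $\Ll$-invariant metric on the orbit at $p$ in the mixed $\mg$-versus-$\ngo$ directions. Combined with the given \eqref{eqn_aperpn}, this yields $\ngo\cdot p\perp(\mg\oplus\ag)\cdot p$. Item (3) follows by the same analysis applied to the opposite choice of positive roots---replacing $\ngo$ by $\ngo^-$---with the same point $p$ and the same $\ag$.

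For item (2), pick $X\in\kg$ and use \Cref{prop_Qtransitive} to write $X^*_p=(Y_\mg+Y_\ag+Z)^*_p$ with $Y_\mg\in\mg$, $Y_\ag\in\ag$, $Z\in\ngo$. Then $X-Y_\mg-Y_\ag-Z\in\lgo_p\subset\kg$, and since the Iwasawa decomposition $\lgo=\kg\oplus\ag\oplus\ngo$ is direct, its $\ag$- and $\ngo$-components vanish: $Y_\ag=0$ and $Z=0$. Hence $X^*_p\in\mg\cdot p$ and $\kg\cdot p=\mg\cdot p$; combined with (1) this gives (2). Finally, item (4) is immediate from (2) and (3): for $Y\in\ngo$, the vector $(Y+\theta Y)^*_p$ is orthogonal to $\kg\cdot p$ by (2) and (3) applied to $Y$ and $\theta Y\in\ngo^-$, yet lies in $\kg\cdot p$ since $Y+\theta Y\in\kg$, so it vanishes; decomposing $Y=\tfrac12(Y-\theta Y)+\tfrac12(Y+\theta Y)\in\pg\oplus\kg$ and evaluating Killing fields at $p$ yields $Y^*_p=(\tfrac12(Y-\theta Y))^*_p\in\pg\cdot p$.

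The main obstacle is the extraction of items (1) and (3) from the equality case in [Prop.~12.5, \cite{alek_sol}], which requires tracking the bilinear forms in that estimate carefully and verifying that equality rigidifies the mixed $\mg$-versus-$\ngo^\pm$ inner products at $p$. A subsidiary issue is ensuring that the conjugation placing $\lgo_p$ inside $\kg$ is compatible with \eqref{eqn_aperpn}.
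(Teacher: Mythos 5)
Your plan for items (1) and (3) has genuine gaps; items (2) and (4) are essentially in line with the paper, modulo the gap in (1).

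\textbf{Item (1).} You leave the key step --- the orthogonality $\mg\cdot p\perp\ngo\cdot p$ --- as a conjecture that equality in the estimate of [Prop.~12.5, \cite{alek_sol}] ``should rigidify'' the metric. This is not carried out, and you yourself flag it as the main obstacle. The paper in fact does not extract this orthogonality from the equality case at all. Instead it argues directly on the Lie-algebraic level: it sets $\ngo^\perp:=\{X\in\qg: X^*_p\perp\ngo\cdot p\}$, notes $\ngo^\perp\cap\ngo\subset\hg\cap\ngo=0$ (freeness of $\N$), so $\dim\ngo^\perp=\dim(\mg\oplus\ag)$, and then uses that $\ngo^\perp$ is a \emph{Lie subalgebra} of $\qg$ (this is [Cor.~8.7, \cite{alek_sol}]) together with $\ag\subset\ngo^\perp$ (which is \eqref{eqn_aperpn}) and $\mg\oplus\ag=Z_\qg(\ag)$ to force any $X=Y+U\in\ngo^\perp$ (with $Y\in\mg\oplus\ag$, $U\in\ngo$) to satisfy $[X,A]=[U,A]\in\ngo\cap\ngo^\perp=0$ for all $A\in\ag$, hence $U=0$. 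This is a clean algebraic argument that does not hinge on the rigidity you hope for.

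\textbf{Item (3).} The claim that item (3) ``follows by the same analysis applied to the opposite choice of positive roots, with the same point $p$ and the same $\ag$'' is unjustified and is exactly where the difficulty lies. To rerun the argument with $\ngo^-$ in place of $\ngo$ at the same $p$, you would need $\ag\cdot p\perp\ngo^-\cdot p$. Condition \eqref{eqn_aperpn} gives only $\ag\cdot p\perp\ngo\cdot p$; the point guaranteed by \Cref{lem_aperpn} for the opposite Iwasawa decomposition lies a priori in $\N^-\cdot p$, not necessarily at $p$. The paper's actual proof of (3) is quite different: for each positive root $\lambda$ it takes a Weyl group representative $k\in N_\K(\ag)$ with $\Ad(k)\lgo_\lambda=\lgo_{-\lambda}$, writes $k=hm$ with $h\in\Hh$, $m\in\M^0$ using the decomposition $\K=\M^0\Hh$ (itself a consequence of item (2) together with $\Ll=\Q\Hh$, $\Q=\M^0\G$, $\Ll=\K\G$), and then concludes by $\Ad(\Hh)$-invariance of the metric together with $\Ad(m)\ngo\subset\ngo$ and item (2). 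You would need to find this or a comparable mechanism to bridge the two root orderings.

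\textbf{Items (2) and (4).} Your argument for (2) --- deduce $\kg\cdot p=\mg\cdot p$ and combine with (1) --- lands in the same place as the paper's. However, your route passes through the preparatory claim $\lgo_p\subset\kg$, which you only sketch; the paper instead derives $\kg=\mg+\hg$ directly from the group-level decompositions $\Ll=\Q\Hh$, $\Ll=\K\G$, $\Q=\M^0\G$. Your argument for (4) is correct and matches the paper.
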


\begin{proof}
We first prove \Cref{item_a+mperpn}. Moreover, we claim that  
\[
    \mg \oplus \ag = \ngo^\perp := \{ X \in \qg : X^*_p \perp \ngo \cdot p \}.
\]
Notice that $\ngo^\perp \cap \ngo \subset \hg \cap \ngo = 0$, since the $\N$-action is free. Hence, $\dim \ngo^\perp = \dim \qg - \dim \ngo = \dim (\mg \oplus \ag)$ (recall that $\qg = \mg \oplus \ag \oplus \ngo$), and it suffices to prove that $\ngo^\perp \subset \mg\oplus \ag$. To that end, let $X = Y+U \in \ngo^\perp$, with $Y\in \mg\oplus \ag$, $U\in \ngo$. Using that $\ag \subset \ngo^\perp$ by \eqref{eqn_aperpn}, $\mg\oplus \ag = Z_\qg(\ag)$, and the fact that $\ngo^\perp$ is a Lie subalgebra of $\qg$ \cite[Cor.~8.7]{alek_sol}, for any $A\in \ag$ we obtain
\[
    \ngo^\perp \ni [X, A] = [U,A] \in [\ngo,\ag] \subset \ngo.
\]
From this it follows that $[X,A]^*_p = 0$, i.e.~ $[X,A] \in \hg \cap \ngo = 0$, thus $X\in \mg\oplus \ag$.

We now show \Cref{item_nperpk}. By \Cref{prop_Qtransitive} we know that $\Ll = \Q \Hh$, $\Hh = \Ll_p$. Since $\Ll = \K \G$ and $\Q = \M^0 \G$, it follows that 
\begin{equation}\label{eqn_K=MH}
    \K = \M^0 \Hh, \qquad \kg = \mg + \hg,
\end{equation}
 where $\hg$ is the isotropy subalgebra at $p$ ($\hg \cdot p = 0$).  Thus, $\kg \cdot p = \mg \cdot p \perp \ngo\cdot p$ by \Cref{item_a+mperpn}.

Regarding \Cref{item_n-perpk}, let $X\in \lgo_\lambda \subset \ngo$, for some positive root $\lambda\in \ag^*$. Thanks to \cite[Prop.~6.52(c)]{Knapp2e}, there exists $k\in N_\K(\ag)$ such that $\Ad(k) \lgo_\lambda = \lgo_{-\lambda}$.
% Since the abstract Weyl group can be realised as $N_\K(\ag) / Z_\K(\ag)$, there exists $k\in N_\K(\ag)$ such that $\Ad(k) \lgo_\lambda = \lgo_{-\lambda}$. 
By \eqref{eqn_K=MH}, we may write $k = h m$ for $h\in \Hh$, $m\in \M^0$. Using that $\Ad(m) \ngo \subset \ngo$, for all $Z\in \kg$ we have 
\[
        \la (\Ad(k) X)^*, Z^* \ra_p = \la (\Ad(m) X)^*,  (\Ad(h^{-1}) Z)^* \ra_p = 0,
\]
by $\Ad(\Hh)$-invariance of the metric and \Cref{item_nperpk}. Thus, $\lgo_{-\lambda} \cdot p\perp \kg \cdot p$ holds for all positive roots $\lambda$, and \Cref{item_n-perpk} follows. 

Finally, given $X\in \ngo$, we have $\theta X\in \ngo^-$ and  $X+\theta X\in \kg$. Items \ref{item_nperpk} and \ref{item_n-perpk} imply that  $(X+\theta X)^*_p \perp  X^*_p$, $(X+\theta X)^*_p \perp  (\theta X)^*_p$, thus $X+\theta X \in \hg$. Therefore,
\[
        X^*_p = \unm (X-\theta X)^*_p + \unm(X+\theta X)^*_p = \unm (X-\theta X)^*_p,
\]
and $\unm(X-\theta X) \in \pg$, from which the lemma follows.
\end{proof}

If $\lgo = \lgo_1 \oplus \cdots \oplus \lgo_r$  where each $\lgo_i$ is a simple ideal, and we set $\pg_i := \lgo_i \cap \pg$, then 
\[
        \pg = \pg_1 \oplus \cdots \oplus \pg_r
\]
is the decomposition of $\pg$ as a sum of irreducible, pairwise inequivalent, $\Ad(\K)$-modules.

\begin{corollary}\label{cor_AdHAdK}
The modules $\pg_i$, $i=1, \ldots, r$,  are irreducible and pairwise inequivalent as $\Ad(\Hh)$-modules.
\end{corollary}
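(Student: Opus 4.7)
The plan is to leverage item \Cref{item_npinpp} of \Cref{cor_kperpp}---which gives $U + \theta U \in \hg$ for every $U \in \ngo$---together with $\hg$ being a Lie subalgebra of $\kg$ to force $\hg = \kg$; from this, both claims of the corollary reduce immediately to the $\Ad(\K)$-irreducibility and pairwise inequivalence of the $\pg_i$ assumed in the setup.

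First I would observe that, since $\theta$ acts as $+1$ on $\kg$ and as $-1$ on $\pg$, and since the restricted-root decomposition gives $\lgo_0 \cap \kg = \mg$, the $\theta$-invariant part of $\ngo \oplus \ngo^-$ is precisely $\{X + \theta X : X \in \ngo\}$; hence this subspace is a vector-space complement of $\mg$ in $\kg$. By item \Cref{item_npinpp} of \Cref{cor_kperpp}, the complement already lies in $\hg$, so $\mg + \hg = \kg$ at the Lie algebra level, recovering \eqref{eqn_K=MH}. The remaining task is to show $\mg \subseteq \hg$. For this I would compute brackets of elements $X + \theta X$, $Y + \theta Y \in \hg$: expanding and using $\theta[X,Y] = [\theta X, \theta Y]$, the contribution from $X \in \lgo_\lambda$ and $Y \in \lgo_\lambda$ (same restricted root space) lands in $\lgo_0 \cap \kg = \mg$. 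Carrying this out in each simple ideal $\lgo_i$ and sweeping over all pairs should produce all of $\mg_i$, giving $\mg \subseteq \hg$ and hence $\hg = \kg$. Since $\Ll$ is connected, $\K$ is connected, so $\Hh^0 = \K^0 = \K$ implies $\Hh \supseteq \K$; combined with $\Hh \subseteq \K$ (which holds after adjusting the Iwasawa decomposition so that the compact isotropy sits inside the chosen maximal compact), we get $\Hh = \K$. The $\Ad(\Hh)$-module structure on $\pg$ then coincides with the $\Ad(\K)$-module structure, so both irreducibility and pairwise inequivalence transfer.

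The main obstacle is the bracket computation verifying that $[X + \theta X, Y + \theta Y]$ with $X, Y \in \ngo_i$ recovers all of $\mg_i$ for each simple ideal $\lgo_i$. In the split case $\mg_i = 0$ and nothing is needed. In the non-split case---where restricted roots may have multiplicity greater than one and $\mg_i$ is a nontrivial compact subalgebra of $\kg_i$---a uniform structural argument is required, showing that $\sum_\lambda [\lgo_{\lambda, i}, \lgo_{-\lambda, i}] \cap \kg_i$ spans $\mg_i$. This is the Lie-theoretic content parallel to the analysis of Jablonski--Petersen \cite{JblPet14} in the homogeneous case.
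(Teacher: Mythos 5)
Your approach aims to prove the corollary by first establishing the much stronger conclusion $\Hh=\K$ and then observing that the corollary becomes trivial. This inverts the paper's logical architecture: the paper proves Corollary \ref{cor_AdHAdK} \emph{without} knowing $\Hh = \K$, and only later (Proposition \ref{prop_Ldotpsym}) establishes $\Hh = \K$ by a Bochner-type maximum principle argument that \emph{uses} Corollary \ref{cor_AdHAdK} as an essential input (via Lemma \ref{lem_RicZZ}). So even if your route could be completed, you would be proving Proposition \ref{prop_Ldotpsym} outright and rendering the section's Bochner argument redundant --- which is a strong signal that something has gone wrong, or at least that the critical step is not routine.

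And indeed you stop short at exactly the critical step: you acknowledge that you cannot show $\mg_i \subseteq \hg$ from the bracket relations alone, calling it ``the main obstacle'' and only asserting that a ``uniform structural argument is required.'' This is a genuine gap, not a detail. You would need to prove that the Lie subalgebra of $\kg$ generated by $\{X+\theta X : X\in\ngo\}$ contains all of $\mg$, and additionally you would need to justify that $\hg\subset\kg$ in the first place (the isotropy group $\Hh$ is compact, but it is not automatic that it lies inside the particular maximal compact $\K$ coming from the fixed Iwasawa decomposition at the fixed point $p$ satisfying \eqref{eqn_aperpn}). Neither point is addressed.

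The paper's actual proof is quite different and sidesteps the need for $\Hh=\K$ entirely. Given an $\Ad(\Hh)$-submodule $\vg\subset\pg$, the authors show $\vg$ is in fact $\Ad(\K)$-invariant by proving $\vg = \bigcup_{k\in\K}\Ad(k)\ag_\vg$ where $\ag_\vg := \vg\cap\ag$. The key mechanism is the conjugacy of maximal abelian subspaces of $\pg$ combined with the factorization $\K = \M^0\Hh$ (which follows from $\Ll = \Q\Hh$, i.e., from Proposition \ref{prop_Qtransitive}, and does \emph{not} presuppose $\Hh=\K$): any $k\in\K$ factors as $k=hm$ with $h\in\Hh$, $m\in\M$, and $\Ad(m)$ acts trivially on $\ag$, so $\Ad(k)\ag_\vg = \Ad(h)\ag_\vg\subset\vg$. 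This immediately gives irreducibility; pairwise inequivalence follows from the same device by noting that any extra irreducible $\Ad(\Hh)$-submodule would again have to be $\Ad(\K)$-invariant, contradicting the known decomposition of $\pg$ into inequivalent $\Ad(\K)$-irreducibles. Your proposal should be reworked along these lines rather than trying to force $\hg=\kg$ algebraically.
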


\begin{proof}
To prove irreducibility, it suffices to show that every $\Ad(\Hh)$-module is also an $\Ad(\K)$-module. To that end, let $\vg \subset \pg$ be an $\Ad(\Hh)$-module, and let $\ag_\vg := \vg \cap \ag$. We claim that 
\[
    \vg = \bigcup_{k\in \K} \Ad(k) \ag_\vg.
\]
To see that, first observe that $\ag_\vg$ is a maximal abelian subspace in $\vg$. Indeed, if $\bg \subset \vg$ is abelian then by uniqueness of maximal abelian subspaces in $\pg$ \cite[Thm.~6.51]{Knapp2e}, there exists $k\in \K$ such that $\Ad(k)\bg \subset \ag$. Writing $k = m h$ with $h\in \Hh$, $m\in \M$, and using that $\Ad(\M)$ acts trivially on $\ag$, we get $\Ad(h) \bg \subset \ag$. But $\vg$ is $\Ad(\Hh)$-invariant, hence $\Ad(h)\bg \subset \ag \cap \vg = \ag_\vg$. This argument shows furthermore that 
\[
    \vg = \bigcup_{h\in \Hh} \Ad(h) \ag_\vg.
\]
Now any $k\in \K$ may be written as $k = h m$ with $\Ad(m)|_\ag = \Id_\ag$, hence $\Ad(k) \ag_\vg = \Ad(h)\ag_\vg$, and the claim follows.

Finally, if we had $\pg_i \simeq \pg_j$ as $\Ad(\Hh)$-modules for some $i\neq j$, then there would be more irreducible $\Ad(\Hh)$-modules other than the $\pg_i$'s. These would also be irreducible as $\Ad(K)$-modules by argument in the previous paragraph, and this is a contradiction because the $\pg_i$'s are inequivalent as $\Ad(K)$-modules.
\end{proof}

\section{The orbits are symmetric spaces}\label{sec_Lorbits}

The upshot of Proposition \ref{prop_Qtransitive} is that the stabiliser $\Hh = \Ll_p$ of a point $p$ satisfying \eqref{eqn_aperpn} must be large enough so that $\Q\Hh = \Ll$ holds. This condition significantly constrains the geometry of the $\Ll$-orbits, as they are diffeomorphic to $\Ll/\Hh$. For example, if $\Ll$ is split semisimple (i.e.~$\mg = 0$, satisfied for instance when $\Ll = \Sl_n(\RR)$), then the condition $\qg + \hg = \lgo$ forces $\Hh = \K$ to be a maximal compact subgroup, and the $\Ll$-orbits must be symmetric spaces. Our aim in this section is to show that this holds even without the split semisimple assumption: 

\begin{proposition}\label{prop_Ldotpsym}
The $\Ll$-orbits are symmetric spaces.
\end{proposition}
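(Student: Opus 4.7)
The plan is to show that the isotropy $\Hh = \Ll_p$ equals (up to identity component) the maximal compact subgroup $\K$ at any point $p$ satisfying \eqref{eqn_aperpn}, so that the orbit $\Ll \cdot p = \Ll/\K$ becomes a symmetric space of non-compact type. By \Cref{prop_Qtransitive} we already have $\qg + \hg = \lgo$, and \eqref{eqn_K=MH} gives $\kg = \mg + \hg$; hence the whole question reduces to proving $\mg \subset \hg$, equivalently $X^*_p = 0$ for every $X \in \mg$. The split-semisimple case ($\mg = 0$) is immediate, as then $\Q = \G$ and \Cref{prop_Qtransitive} directly yields $\hg = \kg$. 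So I assume $\mg \neq 0$ and argue by contradiction, fixing some $X \in \mg$ with $X^*_p \neq 0$.

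The main tool is the Bochner formula for the Killing field $X^*$ on the Einstein manifold $(M,g)$. Since $\ric_g = -g$,
\[
\tfrac12 \Delta_M |X^*|^2 \;=\; |\nabla X^*|^2 + |X^*|^2,
\]
so $|X^*|^2$ is strictly subharmonic wherever it does not vanish. The obstruction to applying the maximum principle on the compact quotient $\Ll \backslash M$ is that $|X^*|^2$ is not $\Ll$-invariant, since $X$ is not fixed by $\Ad(\Ll)$. The cure is to engineer an $\Ll$-invariant subharmonic quantity that still detects $X^*$. A natural candidate is to choose an $\Ad(\Hh)$-invariant complement $\vg$ of $\mg \cap \hg$ inside $\mg$ (whose $\Ad(\Hh)$-module structure is constrained by \Cref{cor_AdHAdK}) and consider
\[
F \;:=\; \sum_i |Y_i^*|^2,
\]
where $\{Y_i\}$ is an orthonormal basis of $\vg$ with respect to an $\Ad(\K)$-invariant background form on $\lgo$. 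Summing the Bochner identities gives $\Delta F \geq 2F \geq 0$ with strict inequality where $F > 0$. The real work lies in using the polarity of the $\N$-action and the harmonicity statements from \Cref{thm_Naction}, together with the orthogonality relations in \Cref{cor_kperpp} (especially $\ngo \cdot p \perp \kg \cdot p$ and $\ngo \cdot p \subset \pg \cdot p$), to verify that $F$ descends to a continuous function on $\Ll \backslash M$, and that the behaviour near the non-principal stratum does not obstruct the maximum principle.

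Once such an $\Ll$-invariant continuous subharmonic $\bar F$ on the compact orbit space $\Ll \backslash M$ is in hand, its maximum is attained; the strict Bochner inequality forces $\bar F \equiv 0$, contradicting $X^*_p \neq 0$. This yields $\mg \subset \hg$, hence $\Hh_0 = \K$ and $\Ll \cdot p \cong \Ll/\K$, a Riemannian symmetric space. The main obstacle is precisely the engineering of the invariant quantity $\bar F$, because the Lie-theoretic decomposition of $\mg$ under $\Ad(\Hh)$ and the geometry near singular orbits must both be controlled simultaneously; this is where the preparation of $\S\S$\ref{sec_ss}--\ref{sec_Qtrans}, in particular the irreducibility and inequivalence of the isotropy modules from \Cref{cor_AdHAdK}, becomes indispensable.
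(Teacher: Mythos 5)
Your overall strategy---reduce the statement to showing $\mg\subset\hg$ and then kill the relevant Killing fields via a Bochner argument---is aligned with the paper, but the execution has a genuine gap precisely at the point you flag as ``the main obstacle,'' and the obstacle is not resolvable in the way you suggest.

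The problem is that the function $F=\sum_i\|Y_i^*\|^2$ built from an $\Ad(\Hh)$-invariant complement $\vg\subset\mg$ is at best $\Hh$-invariant, not $\Ll$-invariant. For $\ell\in\Ll$ one has $\|Y_i^*\|^2(\ell\cdot q)=\|(\Ad(\ell^{-1})Y_i)^*\|^2(q)$, so $\Ll$-invariance of $F$ would require $\Ad(\ell^{-1})$ to preserve $\vg$ and act orthogonally on it for \emph{every} $\ell\in\Ll$. Since $\mg$ is not $\Ad(\Ll)$-invariant (it is only normalised by $\M\A$, not by $\K$ or $\N$), no choice of $\vg\subset\mg$ can achieve this; \Cref{cor_AdHAdK} controls the $\Ad(\Hh)$-module structure of $\pg$, not of $\mg$, and does not repair this. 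Moreover, even if one had an invariant function, the maximum principle cannot be applied naively on $\Ll\backslash M$, because compactness of the section $\Sigma_\Ll$ (\Cref{cor_sectioncpt}) is only established \emph{after} \Cref{prop_Ldotpsym}, and at this stage $\Ll\backslash M$ may still have a nontrivial boundary coming from exceptional/singular orbits.

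The paper sidesteps both difficulties simultaneously by moving the Bochner argument to the quotient $P=\N\backslash M$ and using the \emph{projected} Killing field $\check Z^*$ for $Z\in\mg$ rather than $Z^*$ on $M$. Two ingredients make this work and are absent from your proposal: (i) \Cref{lem_RicZZ}, which uses \Cref{cor_kperpp} to show $L_{Z^*}=0$ and, combined with $\ric^P=-g^P+L^*L$ from \Cref{thm_Naction}\eqref{item_RicP}, yields exactly $\Ric_P(\check Z^*,\check Z^*)=-\|\check Z^*\|^2$; and (ii) the observation that $[\mg,\ag]=0$ forces $f=\tfrac12\|\check Z^*\|^2$ to be constant along the $\G/\N\cong\A$-orbits in $P$, so it descends to the compact smooth manifold $\G\backslash M$ (compactness from \Cref{lem_GMcocompact}, smoothness from the free $\G$-action) where the maximum principle applies cleanly. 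Strict subharmonicity then forces $\check Z^*\equiv 0$, hence $\check Z=0$ and $Z=0$. Without this descent to $P$ and $\G\backslash M$, your argument does not close.
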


To prove this, it is important to first deduce a geometric application of \Cref{cor_AdHAdK}. Recall the Riemannian submersion $\pi^\N : M\to \N\backslash M$. Since  $\N$ is a normal subgroup of $\Q$, we have the corresponding Lie group  and Lie algebra morphisms
\[
  \varphi : \Q \to \Q/\N, \qquad \Phi := d \varphi|_e : \qg \to \qg/\ngo.
\] 
There is also a natural action of $\Q$ on $\N\backslash M$: 
\[
  q\cdot (\N \cdot x) := \N \cdot (q\cdot x), \qquad q\in \Q,  \quad  x \in M,
\]
for which by definition $\pi^\N$ is $\Q$-equivariant. 
The ineffective kernel of said action contains $\N$, thus we have an isometric, cocompact action of $\Q/\N$ on  $\N\backslash M$, given by
\[
    \varphi(q) \cdot (\N\cdot x) := \N \cdot (q\cdot x), \qquad \hbox{or equivalently,} \qquad \varphi(q) \cdot \pi^\N(x) = \pi^\N(q\cdot x).
\] 
In order to simplify the notation, given $Z\in \qg$  we denote $\check Z := \Phi(Z)$.

\begin{lemma}
Let $Z\in  \qg$  and let $Z^* \in \Gamma(TM)$ be the corresponding Killing field. Then,
\[
    (\pi^\N)_* Z^* = \check Z^*,
\] 
where $(\check Z)^*$ is the  Killing field on $\N\backslash M$ associated to $\check Z = \Phi(Z)\in \qg / \ngo$.
\end{lemma}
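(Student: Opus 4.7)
The plan is to prove the identity by a pointwise calculation, exploiting the $\Q$-equivariance of $\pi^\N$ together with the naturality of fundamental vector fields under equivariant maps.

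Fix $x \in M$ and consider the smooth curve $\gamma : \RR \to M$ defined by $\gamma(t) := \exp(tZ) \cdot x$. By the very definition of the Killing field associated to $Z \in \qg$ for the $\Q$-action on $M$, one has $Z^*_x = \gamma'(0)$. Applying $\pi^\N$ and using the equivariance relation $\pi^\N(q\cdot y) = \varphi(q)\cdot \pi^\N(y)$ for all $q\in \Q$, $y\in M$, we obtain
\[
    \pi^\N(\gamma(t)) = \varphi(\exp(tZ))\cdot \pi^\N(x).
\]
Since $\varphi : \Q \to \Q/\N$ is a Lie group homomorphism with differential $\Phi$ at the identity, the naturality of $\exp$ yields $\varphi\circ \exp_\Q = \exp_{\Q/\N}\circ\, \Phi$, and therefore
\[
    \pi^\N(\gamma(t)) = \exp(t\check Z)\cdot \pi^\N(x),
\]
where $\check Z = \Phi(Z)$.

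Differentiating both sides at $t=0$: by the chain rule the left-hand side equals $(\pi^\N)_*(\gamma'(0)) = (\pi^\N)_* Z^*_x$, while the right-hand side is, by the very definition of the Killing field for the $\Q/\N$-action on $\N\backslash M$, exactly $\check Z^*_{\pi^\N(x)}$. Since $x$ was arbitrary, this gives $(\pi^\N)_* Z^* = \check Z^*$ as smooth vector fields.

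There is essentially no obstacle here: this is the general fact that a smooth equivariant map intertwines the fundamental vector fields of the two actions. The only bookkeeping involves checking that $\Phi$ is indeed the differential of $\varphi$ at $e$ and that $\exp_\Q$ and $\exp_{\Q/\N}$ are intertwined accordingly, both of which are built into the definitions and hold since $\N \lhd \Q$ is the normal subgroup defining the quotient.
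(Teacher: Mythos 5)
Your proof is correct and takes essentially the same route as the paper's: differentiate the curve $t \mapsto \exp(tZ)\cdot x$ at $t=0$, push it through $\pi^\N$ using $\Q$-equivariance, and use $\varphi\circ\exp_\Q = \exp_{\Q/\N}\circ\Phi$ to recognise the result as the flow of $\check Z^*$. The only difference is cosmetic: you separate the equivariance step from the naturality-of-$\exp$ step, whereas the paper compresses them into a single chain of equalities.
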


\begin{proof}
Let $p\in M$, $b:= \pi^\N (p) \in \N\backslash M$. If $\exp_\Q : \qg \to \Q$, $\exp_{\Q/\N} : \qg /\ngo \to \Q/\N$ denote the corresponding Lie exponential maps, then we have $\exp_{\Q/\N} \circ \,  \Phi = \varphi \circ \exp_\Q$. Using this, we obtain:
\begin{align*}
  (\check Z)^*_b &= \ddt\big|_0 \exp_{\Q/\N} (t \check Z) \cdot b = \ddt\big|_0  \varphi(\exp(t Z)) \cdot b \\
    & = \ddt\big|_0  \pi^\N( \exp(tZ) \cdot p ) =  (\pi^\N)_* Z_p^* .
\end{align*}
\end{proof}
 Notice that the above in particular implies that $\ho^\N Z^*$ is a basic vector field.

\begin{lemma}\label{lem_RicZZ}
Let $Z\in \mg$ and let $Z^* \in \Gamma(TM)$ be the corresponding Killing field. Then, the shape operator of the $\N$-orbits in $M$ in the direction of $Z^*$ vanishes, that is, $L_{Z^*} = 0$. In particular, 
\begin{equation}\label{eqn_RicZcheck}
    - \Vert \check Z^* \Vert^2 = \Ric_{\N\backslash M}(\check Z^*, \check Z^*).
\end{equation}
\end{lemma}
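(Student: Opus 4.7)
The plan is to establish $L_{Z^*}=0$ first, and then read off the Ricci identity from O'Neill's formula. For the vanishing, I would exploit a symmetry/antisymmetry dichotomy. At any point $p$ where $Z^*$ is $\N$-horizontal, the operator $L_{Z^*}(p)$ is the genuine shape operator of the $\N$-orbit in the horizontal direction $Z^*_p$, hence a symmetric endomorphism of $\ve^\N(p)$. But for any Killing field $Z^*$ the operator $U\mapsto-(\nabla_U Z^*)^\ve$ on $\ve^\N$ is \emph{antisymmetric}: this drops out of the Killing equation $\la\nabla_U Z^*,V\ra+\la U,\nabla_V Z^*\ra=0$, using that $Z^*$ preserves $\ve^\N$ because $[\mg,\ngo]\subset\ngo$. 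These two properties force $L_{Z^*}(p)=0$ at every horizontal point. By \Cref{cor_kperpp} one has $\mg\cdot p\subset\ho^\N(p)$ whenever $p$ satisfies \eqref{eqn_aperpn}, and \Cref{lem_aperpn} ensures each $\N$-orbit contains such a $p$; combined with the $\N$-invariance of $L$ and the fact that $(Z^*)^\ho$ is basic for $\pi^\N$, the vanishing extends to $M_{\sf reg}$.

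The second ingredient I need is the commutation $[Z^*,\mcv^\N]=0$. Since $Z\in\mg$ normalises $\ngo$, the flow of $Z^*$ consists of isometries normalising $\N$, so \Cref{lem_vN-finv} gives $v_\N(\exp(tZ^*)(p))=\exp\bigl(-t\,\tr\ad(Z)|_\ngo\bigr)\,v_\N(p)$. The main Lie-theoretic obstacle is thus to prove $\tr\ad(Z)|_\ngo=0$, which I would handle via the Cartan involution: each summand in $\lgo=\ngo^-\oplus\mg\oplus\ag\oplus\ngo$ is $\ad(Z)$-invariant, and since $\theta(Z)=Z$ with $\theta(\ngo)=\ngo^-$ the restrictions $\ad(Z)|_\ngo$ and $\ad(Z)|_{\ngo^-}$ are conjugate and so share their trace. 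The trace on $\ag$ vanishes by $[\mg,\ag]=0$, the trace on $\mg$ vanishes because $\mg\subseteq\kg$ is a compact subalgebra (whence $\ad_\mg$ is skew-adjoint, hence traceless), and the total $\tr\ad_\lgo(Z)$ vanishes by semisimplicity, forcing $\tr\ad(Z)|_\ngo=0$. Therefore $Z^*\log v_\N=0$, and since $Z^*$ is Killing, $\lca_{Z^*}\mcv^\N=-\nabla(Z^*\log v_\N)=0$.

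The Ricci identity then follows from O'Neill's formula \eqref{eqn_EH} for $\pi^\N:M\to P=\N\backslash M$ applied at a point $p$ satisfying \eqref{eqn_aperpn} with horizontal vector $X=Z^*_p\in\ho^\N(p)$. The Einstein condition yields $\ric_g(Z^*,Z^*)=-\|Z^*\|^2=-\|\check Z^*\|^2$, where the second equality uses horizontality of $Z^*_p$ together with the Riemannian submersion property. The $\N$-action is polar by \Cref{thm_Naction}, so $A^\N=0$; the term $\|L_{Z^*}\|^2$ vanishes by the first step; and $\la\nabla_{Z^*}\mcv^\N,Z^*\ra=0$ because $[Z^*,\mcv^\N]=0$ gives $\nabla_{Z^*}\mcv^\N=\nabla_{\mcv^\N}Z^*$, which is then killed by the Killing identity $\la\nabla_Y Z^*,Y\ra=0$. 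Rearranging delivers \eqref{eqn_RicZcheck}.
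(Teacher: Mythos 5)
Your symmetry/antisymmetry dichotomy in the first step does not actually close, because the identification $L_{Z^*}(p)U = -(\nabla_U Z^*)^\ve$ is false at a point where $Z^*$ is merely \emph{pointwise} normal to the $\N$-orbit. The formula $L_\xi U = \pm(\nabla_U\tilde\xi)^\ve$ is only valid when $\tilde\xi$ is extended to a vector field that is normal to the orbit everywhere, and $Z^*$ is not: its vertical part $(Z^*)^\ve$ vanishes at $p$ but has nonzero first derivative, precisely because the linearization of the flow of $Z^*$ on the orbit is $\ad(Z)|_\ngo$. A Koszul-type computation for the Killing fields $U^*,V^*,Z^*$ gives, at a point $p$ satisfying \eqref{eqn_aperpn},
\[
\la L_{Z^*}U^*,V^*\ra_p = -\tfrac12\bigl(h_p(\ad(Z)U,V)+h_p(U,\ad(Z)V)\bigr),
\qquad
\la\nabla_{U^*}Z^*,V^*\ra_p = \tfrac12\bigl(h_p(\ad(Z)U,V)-h_p(U,\ad(Z)V)\bigr),
\]
so $L_{Z^*}(p)$ is the \emph{symmetric} part of $-\ad(Z)$ with respect to $h_p$, while $(\nabla_\cdot Z^*)^\ve$ is the \emph{antisymmetric} part; the two operators are orthogonal pieces of $-\ad(Z)$, not equal. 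Thus ``symmetric plus antisymmetric implies zero'' does not apply, and $L_{Z^*}(p)=0$ is equivalent to $\ad(Z)|_\ngo$ being $h_p$-skew-adjoint --- which is precisely the non-trivial content one still has to establish. The paper supplies this missing ingredient via \Cref{cor_AdHAdK}: the inner product induced on $\pg\cdot p$ is $\Ad(\K)$-invariant, $\ngo\cdot p\subset\pg\cdot p$ by \Cref{cor_kperpp}, and $\ngo$ is $\Ad(\M)$-stable, so $\ad(Z)|_\ngo$ is skew for every $Z\in\mg$. Without invoking this, the conclusion $L_{Z^*}=0$ is not proved.

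The second half of your argument (establishing $\tr\ad(Z)|_\ngo=0$, hence $[Z^*,\mcv^\N]=0$, and then feeding everything through \eqref{eqn_EH}) is correct but unnecessarily long: the paper simply substitutes $L_{Z^*}=0$ into the already-established identity $\ric^P=-g^P+L^*L$ of \Cref{thm_Naction}\ref{item_RicP}, which has $\nabla\mcv^\N|_{\ho^\N}=0$ already built in, so the commutation argument is redundant.
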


\begin{proof}
Let $p\in M$ satisfy \eqref{eqn_aperpn}. By  \Cref{cor_kperpp}, for each $Z\in \mg$, $Z^*$ is $\N$-horizontal at $p$. We then have
\[
    \la L_{Z^*} U^*, U^*\ra_p = -\la  \nabla_{U^*} U^*, Z^* \ra_p = \la [Z^*, U^*], U^* \ra_p,
\]
for all $U\in \ngo$, where the last equality follows from \cite[(7.27)]{Bss}. We now claim that $(\ad Z)|_\ngo : \ngo \to \ngo$ is skew-symmetric  with respect to the inner product induced by the linear isomorphism $\ngo \simeq \ngo \cdot p$. To see that, \Cref{cor_AdHAdK} implies that the inner product induced by $g$ on $\pg \cdot p \subset  T_p (\Ll\cdot p)$ is $\Ad(\K)$-invariant. Since $\ngo \cdot p \subset \pg \cdot p$ by  \Cref{cor_kperpp}, and $\ngo$ is $\Ad(\M)$-stable, it follows that $(\ad Z)|_\ngo$ is skew-symmetric for each $Z\in \mg$. In other words, $L_{Z^*} = 0$. Plugging this into \Cref{thm_Naction}, \ref{item_RicP} yields \Cref{eqn_RicZcheck}.
 % now follows from \Cref{thm_Naction}, \ref{item_RicP}, and the fact that $L_{Z^*} = 0$.
\end{proof}

\begin{proof}[Proof of Proposition \ref{prop_Ldotpsym}] 
Since $\Ll =  \K \A \N$, $\Q = \M_0 \A \N$, $\M \leq \K$, and $\Ll = \Q \Hh$ (\Cref{prop_Qtransitive}), to prove that $\Hh = \K$ is maximal compact we must show that $\M \leq \Hh$, or equivalently, $\Q / \N = \G /\N$. To that end, we let $Z\in \mg$, 
% which we identify (after a choice of section $\Sigma \subset M$ {\alert explain this better}) with a a Killing field on $N\backslash M$. 
and consider the smooth function 
\[
    f := \unm \left\Vert \check Z^* \right\Vert^2 : \N\backslash M \to \RR,
\]
where $\check Z = \Phi(Z) \in \qg /\ngo$.
 % Since $\G/\N$ is central in $\Q/\N$, the function $f$ is $\G/\N$-invariant.  
For all $X\in \Gamma(T(\N\backslash M))$ we have the well-known Bochner formulas 
\[
    \la \nabla f, X \ra = - \left\la \nabla_{\check Z^*} \check Z^*, X  \right\ra, \qquad \Delta_{\N\backslash M} f =  \big\Vert \nabla^{\N\backslash M} \check Z^* \big\Vert^2 - \Ric_{\N\backslash M}(\check Z^*, \check Z^*).
\]
The first formula and the fact that $[\mg,\ag] = 0$ imply that $f$ is constant along $\G/\N$-orbits. Since $(\G/\N) \backslash (\N\backslash M) \simeq \G \backslash M$ is compact, $f$ must admit a global maximum  at  $x\in \N\backslash M$, hence
\[
   \left( \Delta_{\N\backslash M} f \right) |_x \leq 0.
\] 
On the other hand, the second formula and Lemma \ref{lem_RicZZ} yield 
\[
    \Delta_{\N\backslash M} f = \Vert \nabla^{\N\backslash M} \check Z^* \Vert^2 + \Vert \check Z^* \Vert^2 \geq 0.
\]
In particular, we have $(\nabla \check Z^*)|_x = 0$ and $\check Z^*_x = 0$. Since these determine a Killing field completely, we conclude that $\check Z = 0$. This also gives $Z = 0$, as $\Phi|_{\mg\oplus \ag}$ is injective.
\end{proof}

Next, recall the definition of the \emph{polar group}:  $\Pi := \Ll_{\Sigma} / \Hh$. Here $\Ll_\Sigma$ is the stabiliser of $\sigma(\Sigma_\Ll)$ and $\Hh := \Ll_p$ is the isotropy at $p$, which is also precisely the set of elements in $\Ll$ fixing $\sigma(\Sigma_\Ll)$ pointwise. Recall also that, by \cite[Prop.~1.3]{GZ12}, $\Pi$ is a discrete subgroup of $N(\Hh) / \Hh$ which  acts on $\Sigma_\Ll$  isometrically and properly discontinuously, making $\sigma$ equivariant, and so that the metric spaces $\Sigma_\Ll/\Pi$ and $\Ll\backslash M$ are isometric.

\begin{corollary}\label{cor_sectioncpt}
The polar group $\Pi$ is trivial and the section $\Sigma_\Ll$ is compact.
\end{corollary}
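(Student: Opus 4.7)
The plan is to deduce $\Pi = 1$ from the self-normalising property of the maximal compact subgroup $\K$ in $\Ll$, and then read off compactness of $\Sigma_\Ll$ from the isometry $\Sigma_\Ll/\Pi \simeq \Ll\backslash M$ recalled in the discussion preceding the statement.

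By \Cref{prop_Ldotpsym}, the $\Ll$-orbits are symmetric spaces of non-compact type, so every isotropy group $\Hh = \Ll_p$ is a maximal compact subgroup of $\Ll$. Moving $p$ within its $\Ll$-orbit (equivalently, conjugating) I may assume $\Hh = \K$. Since \cite[Prop.~1.3]{GZ12} embeds $\Pi$ into $N_\Ll(\Hh)/\Hh$, it suffices to prove the classical fact that $N_\Ll(\K) = \K$.

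To this end, any $g \in N_\Ll(\K)$ sends the base point $o = e\K$ of $\Ll/\K$ to another $\K$-fixed point $gK$, so the problem reduces to showing that $o$ is the unique $\K$-fixed point of $\Ll/\K$. The fixed-point set of $\K$ in the simply-connected, non-positively curved space $\Ll/\K$ is closed and convex (by uniqueness of geodesic segments, each segment between two fixed points is pointwise $\K$-invariant), hence a totally geodesic submanifold containing $o$ whose tangent space at $o$ is $\pg^\K$. For any $X \in \pg^\K$ one has $(\ad X)|_\kg = 0$ and $(\ad X)(\pg) \subset \kg$, so $(\ad X)^2 = 0$ on $\lgo$; therefore the Killing form satisfies $\kf(X,X) = \tr(\ad X)^2 = 0$, and positive-definiteness of $\kf|_\pg$ forces $X = 0$. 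Thus $\pg^\K = 0$ and the fixed-point set collapses to $\{o\}$, giving $g \in \K$.

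Combining the above, $\Pi \leq N_\Ll(\K)/\K = 1$ is trivial, and $\Sigma_\Ll$ is isometric to $\Sigma_\Ll/\Pi \simeq \Ll\backslash M$, which is compact by the cocompactness hypothesis on the $\Ll$-action. I do not anticipate a genuine obstacle here: both the self-normalising property of the maximal compact subgroup and the identification of $\Sigma_\Ll/\Pi$ with the orbit space are classical, so \Cref{prop_Ldotpsym} does essentially all the heavy lifting.
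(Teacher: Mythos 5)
Your proof is correct and reaches the same conclusion as the paper, but your argument for the key lemma $N_\Ll(\K)=\K$ is genuinely different from the one the paper gives in its \Cref{prop_N(K)=K}. You run a geometric argument on the symmetric space $\Ll/\K$: $N_\Ll(\K)$ acts on the $\K$-fixed-point set, which is convex and totally geodesic in the Hadamard manifold $\Ll/\K$ with tangent space $\pg^\K$ at $o$, and you kill $\pg^\K$ by observing that $(\ad X)^2=0$ for $X\in\pg^\K$ and hence $\kf(X,X)=\tr(\ad X)^2=0$. The paper instead argues purely algebraically: it writes $g=\exp A$ with $A\in\pg$, uses the global Cartan diffeomorphism $\Ll\simeq\K\times\pg$ to deduce $\Ad(k)A=A$ for all $k\in\K$, and then shows $[A,\pg]=0$ (hence $A\in\zg(\lgo)=0$) via the invariance of the inner product $B_\theta$. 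Both proofs reduce the question to showing $\pg^\K=0$, but the paths there and the final vanishing argument differ; your version trades the Cartan-diffeomorphism bookkeeping for the convexity of fixed-point sets in non-positive curvature, which is arguably more conceptual, while the paper's version is self-contained at the Lie-algebra level and avoids invoking Hadamard-manifold facts. Once $N_\Ll(\K)=\K$ is in hand, the rest of your argument (embedding $\Pi$ into $N_\Ll(\K)/\K$ via \cite[Prop.~1.3]{GZ12} and reading off compactness of $\Sigma_\Ll$ from $\Sigma_\Ll/\Pi\simeq\Ll\backslash M$) matches the paper exactly.
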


\begin{proof}
% Given $p\in M$, let $\sigma : \Sigma \to M$ be a section through $p$ for the polar action of $\Ll$ on $M$ as in $\S$ \ref{sec_Qtrans}. 
By \Cref{prop_Ldotpsym} $\Hh = \K$ is a maximal compact subgroup of the semisimple Lie group $\Ll$. Since  $\K$ is self-normalising (\Cref{prop_N(K)=K}), 
 % $\Ll$ has no compact simple factors {\alert why?} we know that $N(\K) = \K$ {\alert why?}, thus 
 $\Pi \leq N(\K)/\K$ must be trivial. In particular, $\Sigma_\Ll \simeq \Ll\backslash M$ is compact. 
\end{proof}

% {\alert need this ? \cite[Ch.~VI, $\S$1, Prop.~5]{Bour68}  }

\section{Isometric splitting}

We are finally in a position to prove Theorem \ref{thm_unimod}. We continue using the same notation as in previous sections, with one slight change: since by \Cref{cor_sectioncpt} the submanifold $\Sigma_\Ll$  is embedded, we will identify it with its image in $M$ and simply assume that $\sigma : \Sigma_\Ll \hookrightarrow M$ is the inclusion map. As in previous sections, we may and will also assume that the points in $\Sigma_\Ll$ satisfy the technical condition \eqref{eqn_aperpn}. 

 Given $U\in \lgo$, consider the smooth function
\[
    f_U := \unm \Vert U^* \Vert^2 : \Sigma_\Ll \to \RR.
\]

\begin{lemma}\label{lem_Laplacef_U}
Along $\Sigma_\Ll$ we have that 
\[
 \Delta_{\Sigma_\Ll} f_U - \la \nabla f_U, \mcv^\Ll \ra = \ric^\Ll(U^*,U^*) + \Vert U^* \Vert^2 + 2 \, \Vert L U^*\Vert^2.
\]
\end{lemma}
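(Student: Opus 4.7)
The identity will come from comparing the ambient Bochner formula on $(M,g)$ for the Killing field $U^*$ with its intrinsic counterpart on the orbit $\Ll\cdot p$, and accounting for the difference between $\Delta_{\Sigma_\Ll}$ and $\Delta_M$ by exploiting the fact that $\Sigma_\Ll$ is totally geodesic.

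First I would invoke the classical Bochner formula on the Einstein manifold $(M^n,g)$ with $\ric_g = -g$:
\[
    \Delta_M f_U \;=\; \Vert\nabla U^*\Vert^2 - \ric_g(U^*, U^*) \;=\; \Vert\nabla U^*\Vert^2 + \Vert U^*\Vert^2,
\]
together with its intrinsic version on the orbit $\Ll\cdot p$ with the induced metric, since $U^*|_{\Ll\cdot p}$ remains a Killing field:
\[
    \Delta_{\Ll\cdot p} f_U \;=\; \Vert\nabla^\Ll U^*\Vert^2 - \ric^\Ll(U^*, U^*),
\]
where $\nabla^\Ll$ denotes the Levi-Civita connection of the orbit.

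Next, at $p\in \Sigma_\Ll$, the orthogonal decomposition $T_pM = T_p\Sigma_\Ll \oplus \ve^\Ll(p)$ allows me to write $\Delta_M f_U$ as the sum of two partial traces of $\Hess^M f_U$. Since by polarity (\Cref{prop_Lpolar}) the section $\Sigma_\Ll$ is totally geodesic in $M$, the horizontal trace equals $\Delta_{\Sigma_\Ll} f_U$. For the vertical trace I would apply the Gauss-Weingarten formula $\nabla^M_V V = \nabla^\Ll_V V + T_V V$ for vertical $V$, noting that $\Hess^M f_U(V,V) = \Hess^\Ll f_U(V,V) - (T_VV)(f_U)$, and sum over an orthonormal vertical frame $\{V_\alpha\}$; since $\mcv^\Ll = \sum_\alpha T_{V_\alpha} V_\alpha$, this yields
\[
    \tr_{\ve^\Ll(p)} \Hess^M f_U \;=\; \Delta_{\Ll\cdot p} f_U - \la \nabla f_U, \mcv^\Ll\ra.
\]
Combining these pieces gives
\[
    \Delta_{\Sigma_\Ll} f_U - \la \nabla f_U, \mcv^\Ll\ra \;=\; \Delta_M f_U - \Delta_{\Ll\cdot p} f_U.
\]

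To close the argument it remains to verify $\Vert\nabla U^*\Vert^2 = \Vert\nabla^\Ll U^*\Vert^2 + 2\Vert LU^*\Vert^2$. Splitting $\nabla U^*$ along an orthonormal frame $\{X_i\}\cup\{V_\alpha\}$ adapted to the decomposition $T_pM = \ho^\Ll(p)\oplus\ve^\Ll(p)$, the vanishing of O'Neill's integrability tensor $A^\Ll=0$ forces $(\nabla_{X_i}U^*)^{\mathrm{hor}}=0$, while skew-symmetry of $\nabla U^*$ together with the standard identity $\la T_V W,X\ra=\la L_X V, W\ra$ gives $(\nabla_{X_i}U^*)^\ve = -L_{X_i}U^*$. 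On the vertical side $\nabla_{V_\alpha}U^* = \nabla^\Ll_{V_\alpha} U^* + T_{V_\alpha}U^*$, and by symmetry of the shape operator $\sum_\alpha \Vert T_{V_\alpha} U^*\Vert^2 = \sum_{i,\alpha}\la L_{X_i}U^*,V_\alpha\ra^2 = \Vert LU^*\Vert^2$, producing the factor of $2$. Substituting this and the two Bochner formulas into the previous display yields the lemma. I do not expect a genuine obstacle; the only care required is bookkeeping of sign conventions for $L$, $T$, and $\mcv^\Ll$.
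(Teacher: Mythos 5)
Your proof is correct, and it takes a genuinely different route from the paper's. The paper computes $\Delta_{\Sigma_\Ll} f_U$ directly: it picks an $\Ll$-basic orthonormal frame $\{X_k\}$ geodesic at the point, expands $\sum_k X_k \la L_{X_k} U^*, U^* \ra$ via the product rule to get $\sum_k \la (\nabla_{X_k} L)_{X_k} U^*, U^* \ra + 2\Vert L U^*\Vert^2$ (using that basic fields commute with vertical Killing fields), and then eliminates the $\nabla L$ term by inserting the vertical Einstein equation \eqref{eqn_EV} with $A^\Ll = 0$. You instead avoid the $\nabla L$ term entirely: you start from the ambient Bochner formula $\Delta_M f_U = \Vert \nabla U^*\Vert^2 + \Vert U^*\Vert^2$ (using $\ric_g = -g$), split $\Delta_M f_U$ into the $T\Sigma_\Ll$- and $\ve^\Ll$-traces of the ambient Hessian, identify the former with $\Delta_{\Sigma_\Ll} f_U$ via total geodesicity of the section and the latter with $\Delta_{\Ll\cdot p} f_U - \la \nabla f_U, \mcv^\Ll\ra$ via the Gauss formula, subtract the intrinsic Bochner formula on the orbit, and finally verify $\Vert\nabla U^*\Vert^2 - \Vert\nabla^\Ll U^*\Vert^2 = 2\Vert L U^*\Vert^2$ by decomposing $\nabla U^*$ along the submersion (this is where $A^\Ll = 0$ enters for you, killing the horizontal-horizontal components, and where the symmetry of $L$ supplies the factor $2$). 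Both arguments rest on the same three inputs — polarity ($A^\Ll = 0$), total geodesicity of $\Sigma_\Ll$, and the Einstein condition — but you package the curvature input through the Bochner identities rather than through O'Neill's vertical Ricci formula, which makes the conceptual structure (ambient minus intrinsic Bochner, corrected by the second fundamental form) more transparent; the paper's version, by contrast, slots directly into the frame-based Laplacian computations used throughout the rest of the article.
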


\begin{proof}
For $X\in \Gamma(T \Sigma_\Ll)$ we have
% , we extend it locally in vertical directions to an $\Ll$-invariant vector field so that it commutes with Killing fields. We then compute:
\[
    \la \nabla f_U, X\ra = \la \nabla_X U^*, U^* \ra = -\la X, \nabla_{U^*} U^* \ra.
\]
We now use an local, $\Ll$-basic (i.e.~horizontal and projectable), orthonormal frame $\{X_k\}$ for $\ho^\Ll$ satisfying $\nabla_{X_k} X_k=0$ at the point:
\begin{align*}
 \Delta_{\Sigma_\Ll} f_U =& -\sum_k \la \nabla_{X_k} \ho \nabla_{U^*} U^*, X_k \ra = -\sum_k X_k \la \nabla_{U^*} U^*, X_k \ra \\
    =& \sum_k X_k \la  L_{X_k} U^*, U^*\ra \\
    =& \sum_k \left( \left\la \big(\nabla_{X_k} L \big)_{X_k} U^*, U^* \right\ra + \left\la L_{X_k} (\nabla_{X_k} U^*), U^* \right\ra + \la  L_{X_k} U^*, \nabla_{X_k} U^* \ra  \right) \\
    =& \sum_k  \left\la \big(\nabla_{X_k} L \big)_{X_k} U^*, U^* \right\ra  + 2 \, \Vert L U^* \Vert^2,
\end{align*}
where in the last step we have used that basic vector fields commute with vertical Killing fields \cite[Lemma 3.5]{alek_sol}. The lemma now follows from the vertical Einstein equation \eqref{eqn_EV} with $\Ll$-symmetry, and the fact that $A^\Ll = 0$ (\Cref{prop_Lpolar}).
\end{proof}

\begin{proof}[Proof of Theorem \ref{thm_unimod}]
By \Cref{thm_ricneg}, $\Ll$ is semisimple and, by \Cref{lem_centerless}, it may be assumed to have finite center.

By Proposition \ref{prop_Ldotpsym}, for each simple factor $\Ll_i \leq \Ll$ there exists $\lambda_i < 0$ such that for any $U_i \in \lgo_i$ we have
\begin{equation}\label{eqn_Ricuiui}
        \ric^\Ll (U_i^*, U_i^*) = \lambda_i \, \Vert U_i^*\Vert^2.
\end{equation}
Since $\ngo = \ngo_1 \oplus \cdots \oplus \ngo_r$, $\ngo_i = \lgo_i \cap \ngo$, \Cref{cor_tr_NRicL} gives 
\begin{equation}\label{eqn_sum_lambda_i}
   \sum_i (\dim \ngo_i) \lambda_i =  \tr_{\ve^\N} \Ric^\Ll = - \dim \ngo =  - \sum_i (\dim \ngo_i).    
\end{equation}

On the other hand, using \eqref{eqn_Ricuiui}, Lemma \ref{lem_Laplacef_U} reads as 
\begin{equation}\label{eqn_Deltaf_ui}
     \Delta_{\Sigma_\Ll} f_{U_i} - \la \nabla f_{U_i}, \mcv^\Ll \ra = (1+\lambda_i) \Vert U_i^* \Vert^2 + 2 \, \Vert L U_i^*\Vert^2.
\end{equation}
Since $\Sigma_\Ll$ is compact (\Cref{cor_sectioncpt}), by the maximum principle applied to \eqref{eqn_Deltaf_ui} we must have that $\lambda_i \leq -1$ for all $i$.  \Cref{eqn_sum_lambda_i} then yields $\lambda_i = - 1$ for all $i = 1, \ldots, r$.

Putting this back into \eqref{eqn_Deltaf_ui}, from the maximum principle we deduce that $LU^*=0$ for all $U\in \lgo$. That is, the $\Ll$-orbits are totally geodesic. Since $A^\Ll = 0$, at least locally $M$ splits isometrically as the product of the compact Einstein manifold $\Sigma_\Ll$ and the Einstein symmetric space $\Ll/\K$, both with Einstein constant $-1$. Moreover, since the polar group is trivial by \Cref{cor_sectioncpt}, the section intersects each orbit exactly once, and hence the splitting is global. 
\end{proof}

\section{Ricci flat manifolds with cocompact symmetry}\label{sec_Ricciflat}

The following argument is essentially contained in the proof of \cite[Thm.~3]{ChGr71}. Recall that a \emph{line} is a geodesic $\gamma: \RR \to M$ which realises the distance between any pair of its points.

\begin{lemma}\label{lem_line}
Let $(M^n,g)$ be a complete, non-compact Riemannian manifold for which $\Isom(M^n,g)$ acts cocompactly. Then, $(M^n,g)$ contains a line.
\end{lemma}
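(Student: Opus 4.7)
The plan is to use a Cheeger--Gromoll style extraction argument: start with a long minimising geodesic segment from a base point $p_0$ to a point far away, translate it via the cocompact isometric action so its midpoint lands in a fixed compact set, and then pass to a limit.

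First I would fix any $p_0 \in M$. Since $M$ is complete and non-compact, by Hopf--Rinow closed metric balls are compact, so non-compactness forces unboundedness; hence there exists a sequence $q_i \in M$ with $L_i := d(p_0,q_i) \to \infty$. For each $i$ pick a unit-speed minimising geodesic $\gamma_i : [0,L_i] \to M$ from $p_0$ to $q_i$, and let $m_i := \gamma_i(L_i/2)$ be its midpoint. By cocompactness of $\Isom(M,g)$ there is a compact set $K \subset M$ such that $\Isom(M,g)\cdot K = M$, so we can choose $\phi_i \in \Isom(M,g)$ with $\phi_i(m_i) \in K$. Reparametrise by setting
\[
    \tilde\gamma_i : [-L_i/2,L_i/2] \to M, \qquad \tilde\gamma_i(t) := \phi_i\bigl(\gamma_i(t + L_i/2)\bigr).
\]
Each $\tilde\gamma_i$ is a unit-speed minimising geodesic (being the isometric image of a sub-segment of a minimising geodesic), and $\tilde\gamma_i(0) \in K$.

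Next I would pass to a limit. The initial data $(\tilde\gamma_i(0), \tilde\gamma_i'(0))$ lie in the unit tangent bundle $T^1 M|_K$, which is compact; passing to a subsequence, they converge to some $(p_\infty, v_\infty) \in T^1 M$. Since $M$ is complete, continuous dependence of geodesics on initial conditions (equivalently Arzel\`a--Ascoli applied to these $1$-Lipschitz curves) yields a complete geodesic $\gamma_\infty : \RR \to M$ with $\gamma_\infty(0) = p_\infty$, $\gamma_\infty'(0) = v_\infty$, such that $\tilde\gamma_i \to \gamma_\infty$ uniformly on compact subsets of $\RR$.

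Finally I would check the line property. Fix any $s < t \in \RR$. For all sufficiently large $i$ we have $[s,t] \subset [-L_i/2, L_i/2]$, and since $\tilde\gamma_i$ is minimising, $d(\tilde\gamma_i(s), \tilde\gamma_i(t)) = t - s$. Passing to the limit and using continuity of $d$ gives $d(\gamma_\infty(s), \gamma_\infty(t)) = t - s$, so $\gamma_\infty$ is a line. The only mild subtlety is ensuring the geodesic limits rather than just a limit curve with possibly lower regularity: this is automatic because $M$ is complete and the $\tilde\gamma_i$ are genuine geodesics, so the limit of their initial conditions in $T^1 M$ integrates (by the geodesic flow) to a genuine geodesic on all of $\RR$, and uniform convergence on compact sets follows.
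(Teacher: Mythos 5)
Your proof is correct and takes essentially the same approach as the paper: translate long minimising segments so a marked point lies in a fixed compact set, pass to a subsequential limit in the unit tangent bundle, and verify the line property by continuity of the distance. The only cosmetic difference is that the paper first extracts a ray from a fixed base point and then translates points along that ray into the compact set, whereas you translate the midpoints of the original minimising segments directly; both are standard variants of the Cheeger--Gromoll extraction and yield the same conclusion.
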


\begin{proof}
Let $p\in M$. Given a sequence $\{q_n\}_{n\in \NN}$ such that $d(p,q_n) = n$ for all $n\in \NN$, choose minimising geodesics $\gamma_n$ with $\gamma_n(0) = p$ and $\gamma_n(n)=q_n$, for each $n\in \NN$. After passing to a subsequence we may assume that $(\gamma'_n(0))_{n\in\NN}$ converges to a unit vector $v\in T_p M$. It is then clear that the geodesic $\gamma$ with $\gamma(0) = p$, $\gamma'(0) = v$, is a \emph{ray}: it minimises the distance between $\gamma(t)$ and $\gamma(s)$ for all $t,s \geq 0$. 

Let  $K \subset M$ be a compact fundamental domain for $\Isom(M,g)$. For each $n\in \NN$, there exists an isometry $f_n \in \Isom(M,g)$ such that $f_n(\gamma(n)) \in K$. By compactness, after passing to a subsequence we may assume that $f_n(\gamma(n)) \to q \in K$ and  $(f_n)_* \gamma'(n) \to w \in T_q M$. Notice that for each $n\in \NN$, the geodesic $f_n\circ \gamma$ is minimising on $(-n,\infty)$. It follows that the geodesic $\sigma : \RR \to M$ with $\sigma(0) = q$, $\sigma'(0) = w$ is a line.
\end{proof}

\begin{theorem}\label{thm_Ricciflat}
Let $(M^n,g)$ be a complete, Ricci flat Riemannian manifold, and let $\G$ be a Lie group acting properly, isometrically and cocompactly on it. Then, the universal cover  splits isometrically as a Riemannian product $\tilde M = \bar M \times \RR^k$, with $\bar M$ compact.  In particular, $\G_0 \subset \Isom(\RR^k)$ if the action is effective.
\end{theorem}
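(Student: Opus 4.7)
The plan is to combine the Cheeger--Gromoll splitting theorem with the Bochner technique for Killing fields, as sketched in the introduction. First, I would pass to the universal cover $\pi : \tilde M \to M$, which is complete, simply connected, and Ricci flat. Applying Cheeger--Gromoll iteratively (splitting off an $\RR$-factor whenever a line is present) yields, in finitely many steps, an isometric decomposition $\tilde M = \bar M \times \RR^k$, where $\bar M$ is complete, simply connected, Ricci flat, and contains no line.

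Next I would show that $\bar M$ is compact. The $\G$-action on $M$ lifts to an isometric action on $\tilde M$ of an extension $\tilde \G$ of $\G$ by $\pi_1(M)$, and this lifted action is still cocompact, since a compact fundamental domain for $\G$ on $M$ lifts to a compact subset of $\tilde M$ whose $\tilde \G$-orbit is all of $\tilde M$. Since the $\RR^k$-factor is canonically characterised as the maximal Euclidean de Rham factor of $\tilde M$, every isometry of $\tilde M$ preserves the product decomposition; hence the projection of $\tilde \G$ onto $\Isom(\bar M)$ acts cocompactly. If $\bar M$ were non-compact, \Cref{lem_line} would produce a line in $\bar M$, contradicting its construction, and so $\bar M$ must be compact.

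For the final assertion I would invoke the Bochner formula on $\bar M$. Since $\bar M$ is compact and Ricci flat, every Killing field on $\bar M$ is parallel; and since $\bar M$ is simply connected with no Euclidean de Rham factor, the only parallel vector field on $\bar M$ is the zero field. Therefore $\Isom(\bar M)_0$ is trivial. Any Killing field on $M$ coming from the $\G_0$-action lifts to a Killing field on $\tilde M = \bar M \times \RR^k$ whose $\bar M$-component must vanish; consequently the $\G_0$-action factors through $\Isom(\RR^k)$, and effectiveness of the original action ensures that the resulting map $\G_0 \hookrightarrow \Isom(\RR^k)$ is injective.

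I expect the main technical obstacle to be the bookkeeping around lifting the $\G$-action to the universal cover and verifying cocompactness of the projected action on $\bar M$; the iterative Cheeger--Gromoll splitting and the Bochner vanishing step are essentially standard once the setup is in place.
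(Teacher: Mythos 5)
Your proof is correct and follows essentially the same route as the paper: iterate Cheeger--Gromoll on the universal cover, use Lemma~\ref{lem_line} and the fact that isometries preserve the splitting to deduce compactness of the non-Euclidean factor, and finish with the Bochner vanishing. The only cosmetic difference is that you invoke the canonicity of the Euclidean de~Rham factor where the paper argues via ``lines lie in the $\RR^k$-factor,'' which are two phrasings of the same fact.
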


\begin{proof}
The universal cover $\tilde M$ is Ricci flat with respect to the pull-back metric $\tilde g$, and the action of $\G$ lifts to a proper isometric action on $\tilde M$. Also, the fundamental group $\pi_1(M)$ also acts --via deck transformations-- properly and isometrically on $\tilde M$. Since $M/\G$ is compact, it follows that $\Isom(\tilde M, \tilde g)$ acts cocompactly on $\tilde M$. If $\tilde M$ is compact, then we are done. Otherwise, by \Cref{lem_line}, $\tilde M$ contains a line, thus by the Cheeger-Gromoll splitting theorem \cite{ChGr71} we have an isometric splitting $\tilde M \simeq \tilde M_1 \times \RR^1$. In particular, $\tilde M_1$ is Ricci flat. If $\tilde M_1$ contains a line, we split again. In this way, we may continue applying the splitting theorem until we reach a point where $\tilde M \simeq \tilde M_k \times \RR^k$, with $\tilde M_k$ containing no lines. 

We now claim that $\Isom(\tilde M_k)$ acts cocompactly on $\tilde M_k$. To see this, notice that all the lines in $\tilde M$ are contained in the $\RR^k$ factor. Since isometries map lines to lines, it follows that any isometry of $\tilde M$ must map the $\RR^k$ factor to itself, and hence also preserve $\tilde M_k$. This implies that $\Isom(\tilde M) \simeq \Isom(\tilde M_k) \times \Isom(\RR^k)$. The claim thus follows from the fact that $\Isom(\tilde M)$ acts cocompactly on $\tilde M$. By \Cref{lem_line} we conclude that $\tilde M_k$ is compact.

Regarding the claim about $\G_0$, it will follow once we prove that $\Isom(\tilde M_k)$ is discrete. But if $X$ is a non-zero Killing field then by Bochner's theorem, $X$ must be parallel ($\tilde M_k$ is compact and with non-positive Ricci curvature). Since $\tilde M_k$ is also simply-connected, this would imply that it splits an $\RR$ factor, thus contradicting compactness. 
\end{proof}

\begin{remark}
In the case of a cocompact isometric action by a \emph{connected} Lie group, completeness follows. However, in \Cref{thm_Ricciflat} we do not assume that $\G$ is connected, and we are thus  forced to assume completeness.
\end{remark}

\begin{appendix}

\section{Proper isometric actions and polar actions}\label{app_isom_actions}

% \subsection{A word on properness}

% {\alert could refer to Adam's paper}

Let $\G$ be a Lie group acting isometrically on a complete Riemannian manifold $(M^n,g)$ via the Lie group homomorphism
\[
    \tau : \G \to \Isom(M^n,g).
\]
After changing $\G$ by $\G/\ker \tau$, we assume from now on that the action is effective, i.e.~$\tau$ is injective. If $\G$ is non-compact, it is natural to assume that the action is \emph{proper}, that is, the `shear map'
\[
        \G \times M \to M \times M, \qquad (g,p) \mapsto  (\tau(g)p, p),
\]
is a proper continuous function. (Since $M$ is locally compact and Hausdorff, the above definition is equivalent to the notions of \emph{Borel properness}, see \cite[Thm.~1.2.9 (5)]{Pal61}, and to \emph{Palais properness}, see \cite[Def.~1.2.2]{Pal61}.) For isometric actions there is a simple criterion:
% For isometric actions in fact we have:

\begin{lemma}\label{lem_proper}
The action of $\G$ is proper if and only if $\tau(\G)$ is a closed subgroup of $\Isom(M^n,g)$.
\end{lemma}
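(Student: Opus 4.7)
The plan is to use the Myers--Steenrod theorem, which ensures that $\Isom(M^n,g)$ is a Lie group (in the compact-open topology) that acts properly on $M$. With this in hand, the proof reduces to comparing the $\G$-action with the ambient $\Isom(M,g)$-action and then verifying that $\tau$ is a topological embedding.

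For the forward direction, I would argue contrapositively: suppose $\tau(\G)$ is not closed in $\Isom(M,g)$, so there is a sequence $\tau(g_n) \to f \in \Isom(M,g) \setminus \tau(\G)$. Fixing any $p \in M$, convergence in the compact-open topology gives $\tau(g_n) p \to f(p)$, so the pairs $(\tau(g_n)p, p)$ converge in $M \times M$. Properness of the $\G$-action would then force a subsequence $g_{n_k} \to g \in \G$, and continuity of $\tau$ would give $f = \tau(g) \in \tau(\G)$, contradicting the choice of $f$.

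For the backward direction, assume $\tau(\G)$ is closed in $\Isom(M,g)$. By the closed subgroup theorem, $\tau(\G)$ is a Lie subgroup, and a standard argument shows that the continuous injective Lie group homomorphism $\tau \colon \G \to \tau(\G)$ is actually a homeomorphism: $d\tau_e$ is injective by effectiveness, surjective onto $\Lie(\tau(\G))$ by surjectivity of $\tau$, hence $\tau$ is a local diffeomorphism at $e$, and being a bijective homomorphism it is then a Lie group isomorphism onto $\tau(\G)$. Now let $(g_n, p_n) \in \G \times M$ satisfy $(\tau(g_n)p_n, p_n) \to (q, p)$. Properness of the $\Isom(M,g)$-action on $M$ yields a subsequence $\tau(g_{n_k}) \to f$ in $\Isom(M,g)$; closedness places $f \in \tau(\G)$, so $f = \tau(g)$ for a unique $g \in \G$; and the homeomorphism property of $\tau$ promotes this to $g_{n_k} \to g$ in $\G$. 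Together with $p_{n_k} \to p$, this gives convergence of $(g_{n_k}, p_{n_k})$ in $\G \times M$, proving properness.

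The main obstacle is the subtlety in the backward direction: one must know that $\tau$ is a topological embedding, not just a continuous injective homomorphism, in order to lift convergence from $\Isom(M,g)$ back to $\G$. This is where the closedness hypothesis is used non-trivially, via the closed subgroup theorem together with the standard fact that a bijective Lie group homomorphism whose image is a closed Lie subgroup is automatically an isomorphism of Lie groups.
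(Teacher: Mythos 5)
Your proof is correct and takes essentially the same route as the paper's: both directions hinge on the fact that a sequence of isometries with a convergent orbit at one point subconverges in $\Isom(M,g)$, with closedness of $\tau(\G)$ then placing the limit in the image. The only differences are cosmetic: the paper establishes subconvergence of $\tau(g_k)$ by hand via a triangle inequality together with completeness of $(M,g)$ (rather than invoking properness of the $\Isom(M,g)$-action outright), and it leaves implicit the point you spell out carefully, namely that $\tau$ is a topological embedding onto its closed image so that $\tau(g_{n_k}) \to \tau(g)$ upgrades to $g_{n_k} \to g$ in $\G$.
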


\begin{proof}
Let $\{(\tau(g_k) p_k, p_k)\}_k$ be a sequence contained in a compact set in $M\times M$. Then up to passing to subsequences, as $k\to \infty$ we have 
\begin{equation}\label{eqn_pqinfty}
        p_k \to p_\infty, \qquad \tau(g_k) p_k \to q_\infty. 
\end{equation}
The triangle inequality yields
\[
    d(p_k,p_\infty) = d(\tau(g_k)p_k, \tau(g_k) p_\infty) \geq 
    | d(q_\infty, \tau(g_k)p_\infty) - d(\tau(g_k) p_k, q_\infty)  | \geq 0.
\]
By \eqref{eqn_pqinfty} it follows that $\tau(g_k) p_\infty \to q_\infty$ as $k\to \infty$. Thus, the isometries $\tau(g_k)$ subconverge to some $f\in \Isom(M^n,g)$, since $(M^n,g)$ is complete. By assumption we have $f = \tau(g_\infty)$ for some $g_\infty \in \G$, and it follows that $(g_k,p_k) \to (g_\infty, p_\infty)$ as desired. 

Conversely, if the action is proper and $\tau(g_k) \to f$ for some sequence $\{g_k\}_k$ in $\G$, then 
\[
    (\tau(g_k)p,p) \to (f(p),p),
\]
hence $\{ g_k \}_k$ subconverges to some $g_\infty \in \G$, and $f = \tau(g_\infty)$.
\end{proof}

In particular, for all $p\in M$ the isotropy group $\G_p$ is compact. In addition,  one of the main advantages of a proper Lie group action is that the Slice Theorem holds \cite{Pal61}.  Recall that an orbit $\G \cdot p$ is called:
\begin{itemize}
    \item  \emph{principal}, if the isotropy group $\G_p$ is not properly contained  in any other isotropy group $\G_q$, $q\in M$; $\G_p$ is then called a principal isotropy group;
    \item \emph{exceptional}, if $\G_p \lneq \G_q$ for some principal isotropy group $\G_q$, with $\G_q / \G_p$ discrete;
    \item \emph{singular}, if $\dim \G_p < \dim \G_q$ for some principal isotropy group $\G_q$.
\end{itemize}
In general, the orbit space $\G\backslash M$ is a possibly singular metric space. An important  consequence of the Slice Theorem is that there is, up to conjugation, a unique principal isotropy group, and the union of all principal orbits forms an open dense subset of full measure whose quotient is a convex subset of $\G\backslash M$ and a smooth manifold. We set 
\[
    M_{\sf sing} := \{ p\in M : \G\cdot p \hbox{ is a singular orbit} \}, \qquad M_{\sf reg} := M \setminus M_{\sf sing}.
\]
Notice that $M_{\sf reg}$ contains not only principal orbits but also the exceptional ones. It will become clear after $\S$\ref{sec_quo_fb} that $M_{\sf reg}$ is also open in $M$.

In certain situations, there exists a globally-defined slice:   a proper isometric action is called \emph{polar}, if there exists an immersed submanifold $\sigma : \Sigma \to M$ --called a \emph{section}-- intersecting \emph{all} orbits orthogonally. By a result of Boualem  \cite{Boua95} (see also \cite{HLO,GZ12}), this is equivalent to the vanishing of the integrability tensor $A$ of the Riemannian submersion defined on the regular part:
\[
  \pi^\G : M_{\sf reg} \to \G\backslash M_{\sf reg} =: B, \qquad p \mapsto \G \cdot p.
\]
Next, recall the definition of the \emph{polar group}:  $\Pi := \G_{\Sigma} / \Hh$. Here, $\G_\Sigma = \{ f\in \G : f \cdot \sigma(\Sigma) = \sigma(\Sigma) \}$ is the stabiliser of $\sigma(\Sigma)$, and $\Hh := \G_p$ is the isotropy at $p$, which is also precisely the set of elements of $\G$ fixing $\sigma(\Sigma)$ pointwise. Recall also that, by \cite[Prop.~1.3]{GZ12}, $\Pi$ is a discrete subgroup of $N(\Hh) / \Hh$ which  acts on $\Sigma$  isometrically and properly discontinuously, making $\sigma$ equivariant, and so that the metric spaces $\Pi\backslash\Sigma$ and $\G\backslash M$ are isometric.

\section{The frame bundle}\label{sec_FM}

The material contained in this section is standard, but we have decided to include it for the reader's convenience, and to introduce our notation.

\subsection{Basic definitions}\label{sec_FM_basic} Given a smooth Riemannian manifold $(M^n,g^M)$, its  \emph{frame bundle} is the set 
\[
    FM = \{ u_p \mid p\in M, u_p : \RR^n \to T_p M \hbox{ is a linear isometry}\}.
\] 
We refer to each $u_p$ as a frame at $p$. $FM$ has a natural structure of an $\Or(n)$-principal bundle over $M$ with the obvious projection map $\pi: FM \to M$, $\pi(u_p) = p$, and where the (right) action of $h\in \Or(n)$ on $FM$ is simply given by composition $u_p \, h := u_p \circ h$. The latter makes sense since by definition $h : \RR^n \to \RR^n$ is a linear isometry. 

The Levi-Civita connection of $g^M$ is an affine connection on $TM$, thus it gives rise to a principal connection on $FM$, that is, a decomposition
\begin{equation}\label{eqn_ppalconnection}
    T(FM) = \hat\ve^{\Or} \oplus \hat\ho^{\Or} , \qquad  \hat \ve^{\Or(n)} = \ker d\pi.
\end{equation}
More explicitly, a curve $u(t)$ in $FM$ is horizontal (i.e.~ its velocity always lies in $\hat \ho^\Or$) if and only if the corresponding frames move by parallel transport along the curve $\pi(u(t))$ in $M$. 

After fixing a bi-invariant metric $g^{\Or}$ on $\Or(n)$, defined by an $\Ad(\Or(n))$-invariant inner product on $\sog(n)$, we obtain an $\Or(n)$-invariant metric $\hat g^{\Or}$ on the vertical distribution by means of identifying $\hat \ve^{\Or} \simeq \sog(n)$ via fundamental vector fields. We also have the important pull-back metric $\pi^* g^M$ on $FM$, which is only positive semi-definite as it vanishes on  $\hat \ve^\Or$, but  still highly relevant to the geometry on $M$. From these, we construct the (positive-definite) lifted Riemannian metric on $FM$:
\[
    \hat g := \hat g^{\Or} + \pi^* g^M ,
\]

Notice that, by definition, $\hat g$ is constant on fundamental vector fields. That is, if $U,V \in \sog(n)$ and $\hat U^*,  \hat V^*$ denote the corresponding fundamental vector fields on $FM$, then 
$\hat g(\hat U^*, \hat V^*)_u = g^\Or(U,V)$ is independent of $u\in FM$. In particular, 
\[
    \hat g(\nabla_{\hat U^*} \hat V^*, X) = - \unm X \hat g(\hat U^*, \hat V^*) = 0
\]
for all $X\in \hat \ho^\Or$, from which it follows that the $\Or(n)$-orbits in $FM$ are totally geodesic.

\subsection{Lifting isometries}\label{sec_liftiso} Let $f: M \to M$ be an isometry. Then for each $p\in M$, $df_p : T_p M \to T_{f(p)} M$ is a linear isometry. We define $\hat f : FM \to FM$ via
\[
    \hat f (u_p) := df_p \circ u_p.
\]
This is clearly a smooth map, and it lifts $f$ in the sense that
\[
    \pi \circ \hat f = f \circ \pi.        
\] 
Since isometries preserve parallel transport, it is also clear that $\hat f_*$ preserves $\hat\ho^\Or$. Moreover, using that $\hat f$ commutes with the principal $\Or(n)$-action, it follows that $\hat f_*$ also preserves $\hat \ve^\Or$ and that $\hat f_* |_{\hat \ve^\Or} = \Id_{\hat \ve^\Or}$. Hence, 
\[
        \hat f^* (\hat g) = \hat g^\Or + \hat f^* \pi^* g^M = \hat g^\Or + \pi^* f^* g^M = \hat g^\Or + \pi^* g^M  = \hat g,
\]
that is, $\hat f$ is an isometry of $\hat g$.

The key observation at this point is that the lifted isometry $\hat f$ has no fixed points, unless $f = \Id_M$ (in which case also $\hat f = \Id_{FM}$). Indeed, if $\hat f (u_p) = u_p$ then $f(p) = p$ and $df_p \circ u_p = u_p$ which implies $df_p = \Id_{T_p M}$ since $u_p$ is invertible. But an isometry is completely determined by its first jet, thus $f = \Id_M$.

\subsection{Quotients and the frame bundle}\label{sec_quo_fb}

The main reason  to consider the frame bundle is that, given a proper isometric action of a Lie group $\N$ on $(M^n,g)$, the corresponding lifted action on $(FM, \hat g)$ is free and isometric, and hence $\N \backslash FM$ is a smooth Riemannian manifold. Since the lifted action of $\N$ and the principal $\Or(n)$-action commute, the latter sends $\N$-orbits to $\N$-orbits in $FM$, thus it induces an action of $\Or(n)$ on $\N\backslash FM$ which happens to be by isometries, with orbit space  $\N \backslash M$:
\begin{equation}\label{eqn_cdFM}
\begin{tikzcd}
  FM \arrow[r,"\hat\pi^\N"] \arrow[d,"\pi"]
    & \N\backslash FM \arrow[d, "\check\pi"] \\
  M \arrow[r,"\pi^\N"]  &  \N \backslash M 
\end{tikzcd}
\end{equation}

Notice that for each $u_p\in FM$, $\N\cdot u_p \cap \Or(n)\cdot u_p = \N_p \cdot u_p$, where $\N_p$ denotes the isotropy group at $p$ (for the action on $M$). Thus, the $\N$- and $\Or(n)$-orbits through $u_p$ are transversal if and only if $\N_p$ is discrete.

For each $U\in \ngo$ we denoted by $U^* \in \Xg(M)$ the corresponding Killing field, and let us denote by $\hat U^* \in \Xg(FM)$ the Killing field for the lifted action. Observe that $\hat U^*$ and $U^*$ are $\pi$-related, and hence
\[
    (\pi^* g^M)\left(\hat U^* ,\hat U^*\right) =  g^M(U^*, U^*). 
\]

Let  us assume that $\N$ has discrete principal isotropy groups. Let $M_{\sf sing}$ denote the union of singular orbits, a closed, stratified subset of $M$.
If $\iota : M_{\sf sing} \to M$ is the inclusion, the corresponding pull-back bundle $\iota^* FM$  is the $\Or(n)$-bundle $FM_{\sf sing} \to M_{\sf sing}$ given by 
\[
    FM_{\sf sing} = \{ u_p \in FM : \hat \ve^\N (u_p) \cap \hat \ve^\Or(u_p) \neq 0  \}.
\] 
Since $\hat\ve^\N$ and $\hat\ve^\Or$ are vector subbundles of $T(FM)$, it is clear that $FM_{\sf sing}$ is closed in $FM$. Moreover, because the actions of $\N$ and $\Or(n)$ commute, both vector subbundles are $\Or(n)$-invariant, and hence so is $FM_{\sf sing}$. Notice that $u\in FM_{\sf sing}$ if and only if the orbit $\N\cdot \pi(u)$ has dimension strictly smaller than the principal orbits. We also set
\[
   FM_{\sf reg} :=  FM \backslash FM_{\sf sing} = \left\{ u_p :  (\pi^* g^M)|_{ \hat \ve^\N (u_p)} > 0 \right\},
\]
which is  a principal $\Or(n)$-bundle over $M_{\sf reg}$.

\section{Some results on semisimple Lie groups and algebras}\label{app_sslie}

In this appendix we prove three important properties of semisimple Lie groups and algebras for which we were not able to find a direct reference in the Lie theory literature.

\begin{proposition}\cite{Smi35}\label{prop_fin_gen}
The center of a connected semisimple Lie group is a finitely-generated abelian group.
\end{proposition}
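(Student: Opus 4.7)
The plan is to reduce the statement to the finite generation of $\pi_1$ of the adjoint group, and then to exploit a global Cartan decomposition. Since $\Ll$ is semisimple, $\zg(\lgo) = 0$, so the center $Z(\Ll)$ is automatically a discrete abelian subgroup of $\Ll$; only the finite generation is non-trivial.

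First, I would form $\Ll_{\mathrm{ad}} := \Ll / Z(\Ll)$. The quotient map $\Ll \to \Ll_{\mathrm{ad}}$ is a covering with deck group $Z(\Ll)$; combined with the connectedness of $\Ll$, the long exact homotopy sequence collapses to
\[
        0 \longrightarrow \pi_1(\Ll) \longrightarrow \pi_1(\Ll_{\mathrm{ad}}) \longrightarrow Z(\Ll) \longrightarrow 0.
\]
Hence $Z(\Ll)$ is a quotient of $\pi_1(\Ll_{\mathrm{ad}})$, and it suffices to prove that the latter is finitely generated. Now $\Ll_{\mathrm{ad}}$ has trivial center and so admits a global Cartan decomposition: there is a maximal compact subgroup $\K_{\mathrm{ad}} \leq \Ll_{\mathrm{ad}}$ and a subspace $\pg \subset \Lie(\Ll_{\mathrm{ad}})$ such that $\K_{\mathrm{ad}} \times \pg \to \Ll_{\mathrm{ad}}$, $(k, X) \mapsto k \exp(X)$, is a diffeomorphism (\cite[Thm.~6.31]{Knapp2e}). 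In particular $\Ll_{\mathrm{ad}}$ deformation retracts onto $\K_{\mathrm{ad}}$, so $\pi_1(\Ll_{\mathrm{ad}}) \simeq \pi_1(\K_{\mathrm{ad}})$. Since $\K_{\mathrm{ad}}$ is a compact connected Lie group --- by Weyl's theorem, a finite quotient of $T^r \times \K_{\mathrm{ss}}$ with $\K_{\mathrm{ss}}$ simply connected compact semisimple --- its fundamental group is an extension of a finite group by $\ZZ^r$, and hence is finitely generated.

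I do not expect any serious obstruction in this plan. The only subtle point is that the global Cartan decomposition is standardly formulated for semisimple groups with finite center, which is precisely why one must pass to the centerless adjoint quotient before invoking it. An alternative route would pass through the universal cover $\tilde \Ll = \tilde \K \A \N$ (Iwasawa), check that $Z(\tilde \Ll) \leq \tilde \K$, and use that $\tilde \K \cong \RR^m \times \tilde \K_{\mathrm{ss}}$ with $\tilde \K_{\mathrm{ss}}$ compact (Weyl), so that the discreteness of $Z(\tilde \Ll)$ forces it to be an extension of a finite group by a finitely generated free abelian group; the original $Z(\Ll)$ is then a quotient of $Z(\tilde \Ll)$.
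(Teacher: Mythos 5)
Your proof is correct. Both you and the paper reduce the statement to the finite generation of $\pi_1$ of the adjoint group $\Inn(\lgo)$, using that quotients of finitely-generated groups are finitely generated; but where the paper then simply cites \cite{Smi35} for the finite generation of $\pi_1(\Ll/\Zz)$, you actually supply the argument: the adjoint group $\Ll_{\mathrm{ad}}$ has trivial center, hence admits a global Cartan decomposition $\Ll_{\mathrm{ad}} \simeq \K_{\mathrm{ad}} \times \pg$ and deformation retracts onto the compact group $\K_{\mathrm{ad}}$, whose fundamental group is finitely generated. (The paper's reduction is phrased slightly differently --- it passes to the universal cover and identifies $Z(\tilde\Ll) \simeq \pi_1(\tilde\Ll/Z(\tilde\Ll))$ rather than using the covering exact sequence for $\Ll \to \Ll_{\mathrm{ad}}$ directly, but these amount to the same thing, since both quotients are $\Inn(\lgo)$.) Your version is therefore more self-contained, at the small cost of importing the global Cartan decomposition from \cite[Thm.~6.31]{Knapp2e}; there is no circularity in doing so, since that theorem is applied to the centerless group $\Ll_{\mathrm{ad}}$ and does not depend on any structure result about the possibly infinite center of $\Ll$. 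Your observation that $Z(\Ll)$ is automatically discrete because $\zg(\lgo)=0$, and the alternative route via the Iwasawa decomposition of the universal cover, are both correct asides.
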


\begin{proof}
It suffices to prove this under the assumption that the Lie group $\Ll$ is simply-connected, for the center of a Lie group is a quotient of the center of its universal cover, and quotients of finitely-generated groups are finitely generated. The center $\Zz$ of a simply-connected Lie group $\Ll$ is isomorphic to the fundamental group of the quotient group $\Ll/\Zz$, and it is shown in \cite{Smi35} that the latter is finitely generated.
\end{proof}

Consider now a Cartan involution $\theta$ for the semisimple Lie algebra $\lgo$, with $\lgo = \kg \oplus \pg$  the corresponding Cartan decomposition. There is a corresponding decomposition at the group level, $\Ll = \K \exp(\pg)$, where $\K \leq \Ll$ is the connected Lie subgroup with Lie algebra $\kg$. 

\begin{proposition}\label{prop_N(K)=K}
If $\Ll$ is a connected semisimple Lie group then $N_\Ll(\K) = \K$.
\end{proposition}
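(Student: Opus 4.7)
The plan is to use the global Cartan decomposition $\Ll = \K \exp(\pg)$ to reduce normalising to a purely infinitesimal condition, and then exploit the $B_\theta$-symmetry of $\ad X$ for $X \in \pg$ to force $X$ to vanish. Recall that $(k,X) \mapsto k\exp(X)$ is a diffeomorphism $\K \times \pg \to \Ll$ for any connected semisimple $\Ll$: this is classical when $\Ll$ has finite centre and extends to the general case by lifting to the universal cover, whose centre lies in the analytic subgroup corresponding to $\kg$. Given $g \in N_\Ll(\K)$, I would write $g = k \exp(X)$ with $k \in \K$ and $X \in \pg$; since $\K \subset N_\Ll(\K)$, it suffices to show that $\exp(X) \in N_\Ll(\K)$ forces $X = 0$.

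Normalising $\K$ translates to the Lie-algebraic condition $e^{\ad X}(\kg) \subset \kg$. Since $X \in \pg$ and $\lgo = \kg \oplus \pg$ is a $\ZZ/2$-grading, $\ad X$ interchanges $\kg$ and $\pg$, so $\cosh(\ad X)$ preserves the two summands while $\sinh(\ad X)$ swaps them. Splitting $e^{\ad X} = \cosh(\ad X) + \sinh(\ad X)$ and projecting on $\pg$ yields $\sinh(\ad X)|_\kg = 0$. Now $\ad X$ is self-adjoint for the inner product $B_\theta := -B(\cdot, \theta\cdot)$ and hence diagonalisable with real eigenvalues; since $\sinh(t) = 0$ only at $t=0$, I can upgrade this to the stronger statement $\ad X |_\kg = 0$.

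The heart of the argument, and the step I expect to require the most care, is to deduce from $X \in \pg$ and $[X,\kg] = 0$ that $X = 0$. For this I would use the $\ad X$-eigenspace decomposition $\lgo = \bigoplus_\lambda \lgo_\lambda$ together with the identity $\theta(\lgo_\lambda) = \lgo_{-\lambda}$, which follows from $\theta \circ \ad X = -\ad X \circ \theta$. Given any $Y \in \lgo_\lambda$ with $\lambda \neq 0$, the element $Y + \theta Y \in \kg$ lies in $\lgo_0$ (by the assumption $\kg \subset \lgo_0$) but also in $\lgo_\lambda \oplus \lgo_{-\lambda}$, and so must vanish; thus $\theta Y = -Y$, i.e.\ $Y \in \pg$. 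On the other hand $[X,Y] = \lambda Y \in [\pg,\pg] \subset \kg$, and $\pg \cap \kg = 0$ then forces $\lambda Y = 0$, so $Y = 0$. Hence $\ad X = 0$, and semisimplicity of $\lgo$ gives $X \in Z(\lgo) = 0$, completing the proof.
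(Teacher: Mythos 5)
Your proof is correct, and while it shares the paper's backbone, it diverges at both substeps. Both arguments use the global Cartan decomposition $\Ll \cong \K \times \pg$ to reduce to showing that $X \in \pg$ with $\exp(X) \in N_\Ll(\K)$ forces $X = 0$, and both ultimately rely on the fact that $\ad X$ is $B_\theta$-symmetric for $X \in \pg$. To establish $[\kg, X] = 0$, the paper argues at the group level: from $\exp(X) k \exp(-X) = \tilde k$ it rewrites $\exp(\Ad(k^{-1})X) = k^{-1}\tilde k\,\exp(X)$ and invokes the uniqueness of the $\K\times\exp(\pg)$ decomposition to conclude $\Ad(k)X = X$ for all $k\in\K$. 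You instead infinitesimalise $\Ad(\exp X)\kg = \kg$ directly, split $e^{\ad X}$ into its $\cosh$ and $\sinh$ parts (which respectively preserve and swap $\kg$ and $\pg$), and use the real diagonalisability of $\ad X$ to promote $\sinh(\ad X)|_\kg = 0$ to $\ad X|_\kg = 0$; this spectral argument is a bit more conceptual but requires the observation that $\ker\sinh(\ad X) = \ker(\ad X)$, which your phrasing glosses over (the eigenvectors of $\ad X$ with nonzero eigenvalue do not lie in $\kg$, so one should argue on eigenspaces of $\ad X$ rather than eigenvalues of $\ad X|_\kg$). For the final step the paper is quicker: since $[Z,X]=0$ for $Z\in\kg$, the $B_\theta$-symmetry of $\ad X$ gives $0 = B_\theta([Z,X],C) = -B_\theta(Z,[X,C])$ for all $C\in\pg$, and as $[X,C]\in\kg$ this forces $[X,\pg]=0$, hence $X \in \zg(\lgo) = 0$. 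Your eigenspace-plus-$\theta$-conjugation argument reaches the same conclusion and is self-contained, but is longer than needed; the two-line $B_\theta$-orthogonality computation you already have the tools for would close it faster.
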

\begin{proof}
Let $\exp(A) \in N_\Ll(\K) \setminus \K$, $A\in \pg$. For each $k\in \K$ we have 
\[
  \exp(A) k \exp(-A) = \tilde k \in \K,
\]
from which
\[
   \exp(\Ad(k^{-1})A) =   k^{-1} \exp(A) \, k  = k^{-1} \, \tilde k \exp(A)
\]
and hence $k = \tilde k$ and $\Ad(k) A = A$ by the fact that the global Cartan decomposition gives a diffeomorphism $\Ll \simeq \K \times \pg$. This implies that $[\kg,A] = 0$. Using the inner product $B_\theta$ we deduce that for all $C\in \pg$ and $Z\in \kg$ it holds that
\[
    0 = B_\theta([Z, A], C) = - B_\theta(Z, [A,C]),
\]
hence $[A,C]\in \kg$ vanishes. This yields $A\in \zg(\lgo)$, from which $A=0$ and the proposition follows.
\end{proof}

% Finally, The setting is as follows: $\lgo$ is a semisimple Lie algebra with Cartan involution $\theta$, $\lgo = \kg \oplus \pg$ is the corresponding Cartan decomposition.
 % (we assume that $\lgo$ has no compact simple factors), and we have a global decomposition $\Ll = \K \exp(\pg)$, where $\K$ is a maximal compact subgroup. 
 Given any maximal abelian subspace $\ag \subset \pg$ and a choice of positive roots $\Lambda^+ \subset \Lambda \subset \ag^*$, we consider the corresponding Iwasawa decomposition 
\[
  \lgo = \kg \oplus \ag \oplus \ngo, \qquad \ngo = \bigoplus_{\lambda \in \Lambda^+} \lgo_\lambda,
\] 
where $\lgo_\lambda$ denotes the root space corresponding to the root $\lambda \in \Lambda$. We  denote by $\mg$ the centraliser of $\ag$ in $\kg$. Recall that $\lgo_0 = \mg\oplus \ag$ normalises $\ngo$. Finally, we consider on $\lgo$ the $\Ad(\Ll)$-invariant inner product $B_\theta(\cdot, \cdot) := -B(\cdot, \theta \cdot )$, where $B$ denotes the Killing form of $\lgo$.

\begin{proposition}\label{prop_nilspan}
Let $\lgo$ be a semisimple Lie algebra without compact simple factors. Then, $\lgo$ is the linear span of all  Borel nilradicals, when varying over all possible Iwasawa decompositions.
\end{proposition}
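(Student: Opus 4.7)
The plan is to show that the linear span $V$ of all Borel nilradicals is an ideal of $\lgo$, and then to argue simple factor by simple factor.

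First, I would observe the following conjugation property. Let $\lgo = \kg \oplus \ag \oplus \ngo$ be an Iwasawa decomposition coming from a Cartan involution $\theta$, and let $g \in \Ll$. Setting $\theta' := \Ad(g) \circ \theta \circ \Ad(g)^{-1}$ gives another Cartan involution, and the data $\kg' := \Ad(g)\kg$, $\ag' := \Ad(g)\ag$, with positive root spaces $\Ad(g)\lgo_\lambda$, produces an Iwasawa decomposition for $\theta'$ with Borel nilradical $\Ad(g)\ngo$. Thus whenever $\ngo$ is a Borel nilradical and $g\in \Ll$, the subspace $\Ad(g)\ngo$ is again a Borel nilradical. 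It follows that $V$ is $\Ad(\Ll)$-invariant, and since $\Ll$ is connected, $V$ is an ideal of $\lgo$.

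Next, write $\lgo = \lgo_1 \oplus \cdots \oplus \lgo_r$ as a sum of simple ideals, all of which are non-compact by hypothesis. For each $i$ fix an Iwasawa decomposition $\lgo_i = \kg_i \oplus \ag_i \oplus \ngo_i$; since $\lgo_i$ is not compact, $\ag_i \neq 0$ and hence $\ngo_i \neq 0$. Summing these componentwise produces an Iwasawa decomposition of $\lgo$ whose Borel nilradical is $\ngo = \bigoplus_i \ngo_i$, showing in particular that $V$ has non-zero projection onto each simple factor $\lgo_i$.

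Finally, since $V$ is an ideal and each $\lgo_i$ is a simple ideal, the intersection $V \cap \lgo_i$ is an ideal of $\lgo_i$. By the previous paragraph this intersection is non-zero, so $V \cap \lgo_i = \lgo_i$ for every $i$, and we conclude $V = \lgo$.

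There is no real obstacle here; the only subtle point is the conjugation step, which relies on the well-known fact that conjugating an Iwasawa decomposition by an inner automorphism yields an Iwasawa decomposition for the conjugated Cartan involution. The non-compactness assumption on the simple factors enters exactly to ensure each $\ngo_i \neq 0$; without it, compact simple factors would contribute nothing to any Borel nilradical, and the span $V$ would miss them entirely.
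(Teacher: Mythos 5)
Your proof is correct, and it takes a genuinely different route from the paper's. Both arguments begin with the same observation that $\Ad(x)\ngo$ is again a Borel nilradical for any $x\in \Ll$, but they diverge from there. The paper works within a fixed Iwasawa decomposition, uses the decomposition $\lgo = \theta\ngo \oplus \mg \oplus \ag \oplus \ngo$, and proves the sharper inclusion $\mg\oplus\ag \subset [\ngo,\theta\ngo]$ by a Killing-form computation combined with Lemma \ref{lem_Z_mg-ngo} (the vanishing of $Z_\mg(\ngo)$ and $Z_\ag(\ngo)$). Your argument instead observes that the span $V$ of all Borel nilradicals is $\Ad(\Ll)$-invariant, hence an ideal of the semisimple Lie algebra $\lgo$; since ideals are sums of simple factors and $V\cap \lgo_i \supset \ngo_i \neq 0$ for each non-compact simple $\lgo_i$, one gets $V = \lgo$ directly. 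Your approach is shorter and more structural, avoiding the Killing-form computation and the centraliser lemma, and it isolates exactly where the non-compactness hypothesis enters ($\ag_i\neq 0$, hence $\ngo_i\neq 0$). The paper's approach yields the extra piece of information that $\mg\oplus\ag \subset [\ngo, \theta\ngo]$, i.e.\ two opposite Borel nilradicals already suffice, though this refinement is not used elsewhere. One small point of phrasing: after showing $V$ is an ideal, you invoke ``non-zero projection onto $\lgo_i$,'' but in fact what your construction gives directly is the stronger $\ngo_i \subset V\cap\lgo_i$, which is what the final step actually uses.
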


% Before proving this we need a preparatory results. Given a choice of $\ag \subset \pg$, we let $\mg$ be the centraliser of $\ag$ in $\kg$. Recall that $\lgo_0 = \mg \oplus \ag$ normalises $\ngo$. 

\begin{proof}
Recall that $\lgo =\theta \ngo \oplus \mg \oplus \ag \oplus \ngo$, where $\theta\ngo$ is the Borel nilradical corresponding to the opposite choice of positivity for the roots. It remains to be shown that the linear span of Borel nilradicals contains $\mg\oplus\ag$.

Given $x\in \Ll$, it is well-known that the decomposition
\[
    \lgo = \Ad(x)\kg \oplus \Ad(x)\ag \oplus \Ad(x) \ngo
\]
is another Iwasawa decomposition. Thus, it suffices to show that the linear span of $ \{ \Ad(x)\ngo : x\in \Ll\}$ contains $\mg\oplus\ag$. To that end, we will prove that 
\[
  \mg\oplus\ag \subset [\ngo,\theta\ngo],
\]
which is clearly enough. 
% , and these are in fact all of them  given a fixed  Cartan involution \cite[Thm.~6.51 $\&$ Cor.~6.55]{Knapp2e}. 

Suppose  that  $Z\in \mg\oplus \ag = \lgo_0$ is such that $Z\notin [\ngo,\theta\ngo]$.
By the Jacobi identity and the fact that $\theta\in \Aut(\lgo)$, the subspace $[\ngo,\theta\ngo]$ is $\ad(\ag)$-stable and $\theta$-stable, hence it decomposes as a direct sum
\[
    [\ngo,\theta \ngo] = \left([\ngo,\theta\ngo]\cap \mg \right) \oplus \left([\ngo,\theta\ngo] \cap \ag \right) \oplus \bigoplus_{\lambda \in \Lambda} [\ngo,\theta\ngo] \cap \lgo_\lambda.
\] 
We may therefore assume without loss of generality that either $Z\in \mg$ or $Z\in \ag$. Since the Killing form $B$ is definite on each of $\mg$ and $\ag$, and it makes the root spaces orthogonal, we may also assume  that 
$Z \perp_B [\ngo,\theta \ngo]$. Thus, for each $X, Y\in \ngo$ we have 
\[
   0 = B(Z, [X, \theta Y]) = B([Z,X], \theta Y) = - B_\theta([Z,X], Y).
\]
This implies that $[Z,\ngo] = 0$ and hence $Z=0$ by Lemma \ref{lem_Z_mg-ngo} below, a contradiction. 
\end{proof}

\begin{lemma}\label{lem_Z_mg-ngo}
Under the asumptions of \Cref{prop_nilspan}, we have that $Z_{\mg}(\ngo) = Z_{\ag}(\ngo) =  0$. 
% In particular, no non-zero element in $\mg$ centralises all root spaces $\lgo_{\alpha_i}$ corresponding to simple roots $\alpha_i \in \Sigma$. 
\end{lemma}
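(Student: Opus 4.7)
The plan is to handle the two centralisers separately: $Z_\ag(\ngo) = 0$ should follow from an elementary root-system computation, while $Z_\mg(\ngo) = 0$ will require upgrading the given centralising condition to produce an ideal of all of $\lgo$, at which point the no-compact-factors hypothesis applies.

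For the $\ag$-case, I would argue as follows. Any $A \in \ag$ centralising $\ngo$ satisfies $\lambda(A) = 0$ for every positive restricted root $\lambda$, because $[A, X] = \lambda(A) X$ on $\lgo_\lambda$. Since negative restricted roots are the negatives of the positive ones, the positive roots span the same subspace of $\ag^*$ as all roots together. Under our hypothesis each simple non-compact factor contributes a non-trivial piece of $\ag$ whose dual is spanned by the corresponding restricted roots, so the positive roots span all of $\ag^*$ and $A = 0$.

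The $\mg$-case is the interesting one. Set $\mg_0 := Z_\mg(\ngo)$; the plan is to prove $\mg_0 \lhd \lgo$ and $\mg_0 \subset \kg$, and then conclude $\mg_0 = 0$ via the decomposition of $\lgo$ into simple factors. First I would verify that $\mg_0$ is an ideal of $\mg$: using $[\mg, \ngo] \subset \ngo$ (since $\mg \subset \lgo_0$ normalises the root-space decomposition) together with Jacobi, $[[\mg_0, \mg], \ngo] \subset [\mg_0, \ngo] = 0$. Next, since $\theta|_\mg = \Id$, applying $\theta$ to $[\mg_0, \ngo] = 0$ yields $[\mg_0, \theta\ngo] = 0$; combined with $[\mg_0, \ag] = 0$ (from $\mg \subset Z_\kg(\ag)$), this gives $[\mg_0, \theta\ngo \oplus \ag \oplus \ngo] = 0$. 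Adding the already-established $[\mg_0, \mg] \subset \mg_0$ and the decomposition $\lgo = \theta\ngo \oplus \mg \oplus \ag \oplus \ngo$, we see $[\mg_0, \lgo] \subset \mg_0$, i.e.\ $\mg_0 \lhd \lgo$. Any ideal of a semisimple algebra is a sum of simple factors; those contained in $\mg_0 \subset \kg$ must be compact, so the hypothesis that $\lgo$ has no compact simple factors forces $\mg_0 = 0$.

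The only mildly subtle step is recognising that $\mg_0$ is an ideal of \emph{all} of $\lgo$, and not merely of $\mg$; this is where the Cartan involution and the $\theta$-invariance of $\mg$ come into play. Once this is in place, the argument collapses to the general fact that a semisimple Lie algebra without compact simple factors contains no nonzero ideal lying in $\kg$.
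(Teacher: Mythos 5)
Your proof is correct, but it takes a genuinely different route from the paper's. The paper handles $Z \in \mg$ (and, "completely analogously", $Z \in \ag$) by first upgrading $[Z,\ngo]=0$ to $[Z,\pg]=0$ (using $\pg \subset \ag\oplus\ngo\oplus\theta\ngo$, with $[Z,\theta\ngo]=0$ coming from applying $\theta$), then observing via Jacobi that $Z$ centralises $\pg+[\pg,\pg]$, which is an ideal of $\lgo$ meeting every non-compact simple factor and therefore equals $\lgo$ under the hypothesis; hence $Z\in\zg(\lgo)=0$. You instead show that $Z_\mg(\ngo)$ is itself an ideal of $\lgo$ lying inside $\kg$ (so a direct sum of compact simple factors), and for the $\ag$-case you go through the restricted root system directly, using that the positive restricted roots span $\ag^*$ when there are no compact factors. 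Both arguments are sound and both ultimately rest on the no-compact-factors hypothesis in the same spot, but yours isolates the root-theoretic content of the $\ag$-case and makes the ideal structure of $Z_\mg(\ngo)$ explicit, whereas the paper's is shorter and treats the two cases uniformly by reducing each to the universally applicable statement $\pg+[\pg,\pg]=\lgo$.
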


\begin{proof}
Let $Z\in \mg$ with $[Z,\ngo] = 0$. Then $[Z, \theta \ngo] = [Z,\ag] =  0$ as well. Since $\pg \subset \ag \oplus \ngo \oplus \theta\ngo$, we have $[Z,\pg] = 0$. By the Jacobi identity this implies that $[Z, \pg + [\pg,\pg] ] = 0$. Now using the bracket relations from the Cartan decomposition $\lgo = \kg\oplus \pg$, it is easy to see that $\pg + [\pg,\pg]$ is an ideal in $\lgo$, which intersects all non-compact simple factors. Since $\lgo$ has no compact simple factors, $\lgo = \pg+[\pg,\pg]$, and hence $Z\in \zg(\lgo) = 0$. The proof for $Z_{\ag}(\ngo)$ is completely analogous.
\end{proof}

\end{appendix}

%\bibliography{../bib/ramlaf2}
       % \bibliography{/home/cboehm/Documents/tex/research/homEinRnplusnew/ramlaf2}
              \bibliography{C:/Users/cboehm/Documents/Documents/Tex/ramlaf2}
\bibliographystyle{amsalpha}

\end{document}